\definecolor{brass}{rgb}{0.71, 0.65, 0.36}
\theoremstyle{plain}
\newtheorem{theorem}{Theorem}[section]
\newtheorem{lemma}[theorem]{Lemma}
\newtheorem{prop}[theorem]{Proposition}
\newtheorem{corollary}[theorem]{Corollary}
\newtheorem{proposition}[theorem]{Proposition}
\theoremstyle{definition}
\newtheorem{remark}[theorem]{Remark}
\numberwithin{equation}{section}
\def\sup{\operatorname{sup}}
\def\inf{\operatorname{inf}}
\def\sup{\operatorname{sup}}
\def\dist{\operatorname{dist}}
\theoremstyle{plain}
\numberwithin{equation}{section}
\DeclareMathOperator{\hess}{Hess}
\DeclareMathOperator{\tr}{tr}
\begin{document}
 
\title[Almost Splitting and the Topology of Black Holes]{A Bakry-\'Emery Almost Splitting Result With Applications to the Topology of Black Holes}

\author[Galloway]{Gregory J. Galloway}
\address{Department of Mathematics\\
University of Miami\\
Coral Gables, FL 33146, USA}
\email{galloway@math.miami.edu}

\author[Khuri]{Marcus A. Khuri}
\address{Department of Mathematics\\
Stony Brook University\\
Stony Brook, NY 11794, USA}
\email{khuri@math.sunysb.edu}

\author[Woolgar]{Eric Woolgar}
\address{Department of Mathematical and Statistical Sciences\\
and Theoretical Physics Institute\\
University of Alberta\\
Edmonton, AB, Canada T6G 2G1}
\email{ewoolgar@ualberta.ca}

%\date{\today}

\thanks{G. J. Galloway acknowledges the support of NSF Grant DMS-1710808. M. A. Khuri acknowledges the support of NSF Grant DMS-1708798, and Simons Foundation Fellowship 681443. E. Woolgar acknowledges the support of a Discovery Grant RGPIN-2017-04896 from the Natural Sciences and Engineering Research Council.}

\begin{abstract}
The almost splitting theorem of Cheeger-Colding is established in the setting of almost nonnegative generalized $m$-Bakry-\'{E}mery Ricci curvature, in which $m$ is positive and the associated vector field is not necessarily required to be the gradient of a function. In this context it is shown that with a diameter upper bound and volume lower bound, as well as control on the Bakry-\'{E}mery vector field, the fundamental group of such manifolds is almost abelian. Furthermore, extensions of well-known results concerning Ricci curvature lower bounds are given for generalized $m$-Bakry-\'{E}mery Ricci curvature. These include: the first Betti number bound of Gromov and Gallot, Anderson's finiteness of fundamental group isomorphism types, volume comparison, the Abresch-Gromoll inequality, and a Cheng-Yau gradient estimate. Finally, this analysis is applied to stationary vacuum black holes in higher dimensions to find that low temperature horizons must have limited topology, similar to the restrictions exhibited by (extreme) horizons of zero temperature.
\end{abstract}

\maketitle
%\tableofcontents

\section{Introduction}
\label{sec1} \setcounter{equation}{0}
\setcounter{section}{1}

What are the possible topologies of stationary black holes? As we will see, a new approach involves the study of generalized $m$-Bakry-\'{E}mery Ricci curvature lower bounds. Let us recall previous techniques and results.
The topology of stationary black holes in 4-dimensional spacetime is tightly constrained by energy conditions. Hawking \cite{Hawking}, \cite[Proposition 9.3.2]{HE} proved that if the dominant energy condition holds, then apparent horizons of stationary black holes in $4$-dimensional spacetimes must have spherical topology; a borderline case that could have admitted toroidal topology was definitively eliminated more recently \cite{Galloway}. An independent theorem, based on the topological censorship theorem \cite{FSW} and requiring instead the null energy condition but also implying spherical horizon topology (in this case, for the event horizon itself) in $4$-dimensional stationary spacetimes, was first noticed in \cite{CW} and was generalized in \cite{GSWW}.

In higher dimensions, the situation is quite different. Although topological censorship applies in $5$ and more dimensions, it places no significant restrictions on event horizon topology. Hawking's theorem can be generalized to higher dimensions \cite{GS}, and implies that the horizon must be of positive Yamabe type, but this is a relatively mild restriction in higher dimensions. In $5$ spacetime dimensions it permits orientable horizon cross-sections to have the topology of spherical spaces, $S^1\times S^2$, or connected sums thereof. There are now many known examples of higher-dimensional stationary black holes with nontrivial topology, such as the $5$-dimensional ring solutions of \cite{ER} and \cite{PS} which have cross-sectional horizon topology $S^1\times S^2$. However, there are no known examples in which the horizon is a (nontrivial) connected sum of these.

The \emph{near horizon geometry} equations provide another approach to horizon topology in higher dimensions. The idea is to consider, instead of a curvature bound, the precise equations satisfied by the induced degenerate metric on Killing horizons. This has proved useful in the case of extreme (also called degenerate or zero temperature) Killing horizons, see for example \cite{KhuriWoolgar1,KWW}. In \cite{KWW} it is proved, among other things, that for stationary vacuum
%%EW: Added word:
extreme
black holes in an $(n+2)$-dimensional spacetime, the fundamental group of the horizon contains an abelian subgroup of finite index which is isomorphic to $\mathbb{Z}^k$ with $k\leq n-2$.
%%EW: Moved one sentence that seemed out of place here---it now appears after this paragraph.
Since extreme horizons constitute a ``set of measure zero'', an obvious question is whether results obtained for zero-temperature black holes using the near horizon geometry equations have some stability when the thermostat is turned up. One purpose of this paper is to generalize the results of \cite{KWW} to nonzero temperature horizons.
%%EW: Moved sentence from above to here.
Note that each technique listed above deals with a logically different (and in the presence of general time evolution, a physically different) entity: apparent horizons for the technique pioneered by Hawking, event horizon cross-sections for topological censorship, and Killing horizon cross-sections for the near horizon geometries.

Consider an $(n+2)$-dimensional stationary black hole spacetime satisfying the vacuum Einstein equations
\begin{equation}\label{EEE}
R_{\mu\nu}(\mathbf{g})=\frac{2}{n} \Lambda\mathbf{g}_{\mu\nu}.
\end{equation}
According to the rigidity theorem \cite{HI0,HIW,MI}
stationarity generically yields, in addition to an asymptotically timelike Killing field, one or more extra rotational symmetries which altogether produce a Killing field $V$ that is normal to the event horizon. The event horizon is then a Killing horizon, and there exists a \textit{surface gravity} constant $\kappa$ such that
on this surface
\begin{equation}
\pmb{\nabla}_{V}V=\kappa V,
\end{equation}
where $\pmb{\nabla}$ is the Levi-Civita connection for $\mathbf{g}$.
In a neighborhood of each horizon component, Gaussian null coordinates $(u,v,x^i)$ can be introduced so that $V=\partial_{v}$, $u=0$ represents the horizon, $x^i$ are coordinates on the $n$-dimensional compact horizon cross-section $\mathcal{H}$, and $U=\partial_u$ is an outgoing null vector. In these coordinates the spacetime metric then takes the form \cite[Section 3.2]{HI}
\begin{equation}
\mathbf{g} = 2 dv \left(du -u F(u,x) dv -uh_i(u,x) dx^i\right)
 + g_{ij}(u,x) dx^i dx^j.
\end{equation}
Here $g$ is the induced metric on the horizon cross-section and $F(0,x)=\kappa$.
The components of the Ricci tensor in the direction tangent to the cross-section are given in \cite{HI,HIW} by
\begin{equation}\label{ggg}
R_{ij}(\mathbf{g})=R_{ij}(g)-\frac{1}{2}h_i h_j -\nabla_{(i}h_{j)}-\kappa\mathcal{L}_{U}\mathbf{g}_{ij}
-\mathcal{L}_{U}\mathcal{L}_{V}\mathbf{g}_{ij}+O(u),
\end{equation}
where $\mathcal{L}$ denotes Lie differentiation and $\nabla$ is the Levi-Civita connection for $g$. Since $V$ is a Killing field the last term before $O(u)$ vanishes. Thus, with the help of the Einstein equations \eqref{EEE}, taking the limit as $u\rightarrow 0$ produces
\begin{equation}\label{B-E}
R_{ij}(g)-\nabla_{(i}h_{j)}-\frac{1}{2}h_i h_j =\frac{2}{n} \Lambda g_{ij}+2\kappa\chi_{ij}\quad\quad\text{ on }\quad\quad\mathcal{H},
\end{equation}
where $\chi_{ij}=\langle\pmb{\nabla}_{\partial_i}U,\partial_j\rangle$ is the null second fundamental form in the $U$ direction.

Recall that the generalized $m$-Bakry-\'{E}mery Ricci tensor is given by
\begin{equation}\label{bee}
\mathrm{Ric}_{X}^{m}(g)=\mathrm{Ric}(g)
+\frac{1}{2}\mathcal{L}_{X}g-\frac{1}{m}X\otimes X,
\end{equation}
in which $X$ is a 1-form/vector.  Thus, by setting $m=2$ and $X=-h$ equation \eqref{B-E} gives a lower bound for the Bakry-\'{E}mery Ricci curvature of horizon cross-sections
\begin{equation}
\mathrm{Ric}_{-h}^{2}(g)=\frac{2}{n} \Lambda g+2\kappa\chi\quad\quad\text{ on }\quad\quad\mathcal{H}.
\end{equation}
This may then be combined with results concerning Bakry-\'{E}mery Ricci curvature lower bounds to produce restrictions on horizon topology.
In particular, it is typically the case that $\kappa$ is nonnegative as it represents the horizon temperature, so that if in addition $\chi$ is positive semi-definite then the previous results for extreme black holes \cite{KWW} immediately carry over to this realm. However, such semi-definiteness is not a general feature of black hole Killing horizons.  For example, it has been shown by direct computation \cite{ChruHor} that $\chi$ for the Emparan-Reall black ring \cite{ER}, has one negative eigenvalue; it can be inferred from the $m$-Bakry-\'{E}mery splitting theorem obtained in \cite{KWW}, that at least some eigenvalue has to be negative.
With this in mind,
let $\lambda$ denote a lower or upper bound (depending on the sign of $\kappa$) for the eigenvalues of $\chi$, that is
\begin{equation}
\kappa\lambda=\inf_{x\in\mathcal{H}}\min_{ w\in T_{x}\mathcal{H}\atop |w|=1}\kappa\chi(w,w).
\end{equation}
Furthermore let $\mathcal{C}$, $\mathcal{D}$, and $\mathcal{V}$ be constants such that
\begin{equation}\label{,al1}
\mathrm{diam}(\mathcal{H})\leq\mathcal{D},
\quad\quad\quad
\mathrm{Vol}(\mathcal{H})\geq\mathcal{V}, \quad \quad\quad \sup_{\mathcal{H}}\left(|X|+|\nabla\mathrm{div}X|\right)\leq\mathcal{C}.
\end{equation}

\begin{theorem}\label{maincor}
Let $\mathcal{H}$ be a single component compact horizon cross-section in a stationary vacuum spacetime satisfying \eqref{,al1}.

\begin{itemize}
\item [(i)] Assume that $\Lambda\geq 0$. There exists $\kappa_0(n,\lambda,\mathcal{C},\mathcal{D},\mathcal{V})>0$, such that if $|\kappa|\leq\kappa_0$ then $\mathcal{H}$ is not a connected sum $M\# N$, where $M$ and $N$ are compact manifolds having nontrivial fundamental groups, except possibly in the case that $\pi_1(M)=\pi_1(N)=\mathbb{Z}_2$.\smallskip

\item [(ii)] Assume that $\Lambda\geq 0$. There exists $\kappa_0(n,\lambda,\mathcal{C},\mathcal{D})>0$, such that if $|\kappa|\leq\kappa_0$ then the first Betti number satisfies $b_{1}(\mathcal{H})\leq n+2$. Moreover, if $X=df_0$ for some $f_0\in C^{\infty}(\mathcal{H})$ and the assumption $\sup_{\mathcal{H}}\left(|X|+|\nabla\mathrm{div}X|\right)\leq\mathcal{C}$ is replaced by $\sup_{\mathcal{H}} |f_0|\leq \mathcal{C}$, then $b_{1}(\mathcal{H})\leq n$.\smallskip

\item [(iii)] Assume that $\Lambda> 0$. There exists $\kappa_0(n,\lambda,\Lambda)>0$, such that if $|\kappa|\leq\kappa_0$ then
    $\pi_1(\mathcal{H})$ is finite. In particular, de Sitter black rings having horizon cross-sectional topology $S^1\times M$ where $M$ is a compact manifold, do not exist with low temperature.\smallskip

\item [(iv)] Let $\Lambda_0\in\mathbb{R}$. There are only finitely many isomorphism types of $\pi_1(\mathcal{H})$, among horizons satisfying \eqref{,al1} and $\frac{2}{n}\Lambda+\kappa\lambda\geq\Lambda_0$.
\end{itemize}
\end{theorem}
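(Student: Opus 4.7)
The plan is to feed the pointwise curvature identity on $\mathcal{H}$ into the Bakry-\'{E}mery analogs of classical Ricci comparison theorems that the paper has already established. By \eqref{bee} with $m=2$ and $X=-h$, equation \eqref{B-E} immediately yields
\begin{equation*}
\mathrm{Ric}_{-h}^{2}(g)\geq\bigl(\tfrac{2}{n}\Lambda+2\kappa\lambda\bigr)\,g\qquad\text{on }\mathcal{H},
\end{equation*}
so in each of the four parts it suffices to choose $\kappa_{0}$ small enough (relative to the indicated parameters) that the right-hand side meets the hypothesis of the relevant theorem.

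For (i), small $|\kappa|$ together with the bounds \eqref{,al1} place us in the regime of almost nonnegative generalized $2$-Bakry-\'{E}mery Ricci curvature, so the paper's result on the virtual abelianness of the fundamental group applies. For $n\geq 3$, van Kampen gives $\pi_{1}(M\# N)\cong\pi_{1}(M)*\pi_{1}(N)$, and the classical algebraic fact that a free product of two nontrivial groups contains the non-abelian free group $F_{2}$ except when both factors equal $\Z_{2}$ (in which case $\Z_{2}*\Z_{2}\cong D_{\infty}$ is virtually $\Z$) then forces the exceptional case, since a virtually abelian group cannot contain $F_{2}$.

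For (ii), the almost nonnegativity forced by small $|\kappa|$ activates the generalized Gromov-Gallot first Betti number bound with effective dimension $n+m=n+2$, giving $b_{1}(\mathcal{H})\leq n+2$. In the gradient case $X=df_{0}$, the hypothesis $\sup_{\mathcal{H}}|f_{0}|\leq\mathcal{C}$ replaces the $|\nabla\divergence X|$ bound and a Witten-Laplacian Bochner identity recovers the sharper $b_{1}\leq n$. For (iii), with $\Lambda>0$ we pick $\kappa_{0}$ depending only on $n,\lambda,\Lambda$ so that $\tfrac{2}{n}\Lambda+2\kappa\lambda>0$, and Myers' theorem in the Bakry-\'{E}mery setting (standard for $m<\infty$) forces compactness of the universal cover, hence $|\pi_{1}(\mathcal{H})|<\infty$; the de Sitter black ring consequence follows because $\pi_{1}(S^{1}\times M)$ contains $\Z$. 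For (iv), the hypothesis on $\tfrac{2}{n}\Lambda+\kappa\lambda$ provides a uniform Bakry-\'{E}mery Ricci lower bound, and combined with \eqref{,al1} this feeds directly into the paper's Bakry-\'{E}mery version of Anderson's finiteness theorem.

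The main obstacle is part (i): the virtual abelianness input rests on the Bakry-\'{E}mery almost-splitting theorem, which is the analytic heart of the paper; once that is in hand the group-theoretic step for connected sums is routine. Parts (ii)--(iv) are direct corollaries of the preparatory Bakry-\'{E}mery generalizations of Gromov-Gallot, Myers, and Anderson, so the only remaining subtlety is keeping careful track of which of $n,\lambda,\Lambda,\mathcal{C},\mathcal{D},\mathcal{V}$ each $\kappa_{0}$ may depend on.
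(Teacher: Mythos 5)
Your proposal is correct and follows essentially the same route as the paper: reduce \eqref{B-E} to the lower bound $\mathrm{Ric}_{-h}^{2}(g)\geq\bigl(\tfrac{2}{n}\Lambda+2\kappa\lambda\bigr)g$ with $m=2$, then invoke, respectively, the almost-abelian theorem (Theorem \ref{abelian}), the first Betti number bound (Theorem \ref{betti}), the Bakry-\'Emery Myers theorem, and the Anderson-type finiteness result (Lemma \ref{1234567}). The only cosmetic difference is in part (i), where the paper rules out connected sums via Gromov's exponential-growth criterion for finitely generated almost abelian groups, which is equivalent to your ``no $F_2$ subgroup'' argument.
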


\begin{remark}
It should be noted that the surface gravity $\kappa$ depends on scalings of the Killing field $V$. While there is a canonical normalization in the asymptotically flat setting, in general this is not the case. Thus, it may be desirable in certain situations to restate Theorem \ref{maincor} in terms of the smallness of a quantity invariant under scalings of the Killing field, namely $\kappa\lambda$.
\end{remark}

In the asymptotically flat or asymptotically Kaluza-Klein setting, if there is a $U(1)$ symmetry (this condition is generic \cite{HI0,HIW,MI}) then $\mathbb{RP}^3\# \mathbb{RP}^3$ may be removed from the list of exceptional cases for Theorem \ref{maincor} $(\mathrm{i})$, see \cite[Remark 8]{KWW}. Since the horizon cross-section must be of positive Yamabe type \cite{Galloway,GS}, it follows that low temperature orientable horizons in spacetime dimension 5 can only have the topology of a spherical space, or $S^1\times S^2$. Furthermore, the blackfolds technique \cite{AHO,AO,CER} has been used to infer the existence of new horizons, including new black rings, in asymptotically anti-de Sitter and asymptotically flat spacetimes. The approach also suggests black rings in de Sitter spacetime, but not in the low temperature limit (for bounded horizon area). It would be interesting to determine more precisely the domain of validity of that approach, and of ours.

%Furthermore, the nonexistence of low temperature de Sitter black rings in part %$(\mathrm{iii})$ may be somewhat interesting, since evidence for the existence
%of such nondegenerate rings has been put forth in \cite{AO,CER}.

This theorem may be interpreted as a type of stability for topological restrictions present in the structure of extreme black holes, or rather, low temperature horizons have the same limited topology as zero temperature horizons. The strategy to achieve this result will be to develop an almost splitting theorem in the generalized Bakry-\'{E}mery setting, and then harness the topological conclusions that flow forth. The original almost splitting theorem of Cheeger and Colding \cite{CC}, asserts that if the Ricci curvature is almost nonnegative and there is almost a line, then the manifold almost splits. Thus it is a quantitative form of the Cheeger-Gromoll \cite{CheegerGromoll} splitting theorem, in that it quantifies precisely how far off the manifold is from an exact splitting. From such a quantitative result, topological consequences arise as corollaries, though the consequences are somewhat less restrictive than those implied by an exact splitting. For example, with a diameter upper bound and volume lower bound Yun \cite{Yun}, relying on work of Wei \cite{Wei}, showed that the fundamental group of manifolds with almost nonnegative Ricci curvature is almost abelian, that is, it contains an abelian subgroup of finite index. When the Ricci curvature is nonnegative the splitting theorem leads to knowledge of the structure of this abelian subgroup, namely it is a direct sum of infinite cyclic groups.
An extension of the almost splitting theorem to the Bakry-\'{E}mery setting has been established by Jaramillo \cite{Jaramillo} and Wang and Zhu \cite{WZ}, in the case of a gradient field $X=df$ with $m=\infty$ and $|f|\leq c$; the result in \cite{WZ} requires also a bound on the first derivatives $|\nabla f|\leq c$. Moreover, extensions in this context of the results of Yun and Wei are also given in \cite{Jaramillo}.

Our setting differs from that of \cite{Jaramillo,WZ} in two ways. First, we have a term with negative coefficient $-1/m$ in equation \eqref{bee}, which is not present in the previous works. The sign of this term, however, is beneficial. What makes the current setting more difficult is the second difference, which is that the 1-form $X$ need not be exact.
An almost splitting result in this situation, with $m=\infty$, has been obtained by Zhang and Zhu \cite{ZhangZhu}, in which $X$ is required to be almost zero. For applications to horizons, however, it is necessary to consider the general case where $X$ is neither exact nor small.
It turns out that the advantageous $-1/m$ coefficient is able to compensate for the difficulties arising from large non-gradient $X$, to allow for a version of the almost splitting theorem in this situation. In what follows, the Riemannian and Gromov-Hausdorff distances will be denoted by $d$ and $d_{GH}$, respectively.

\begin{theorem}\label{theoremalmostsplit}
Let $(M,g,X)$ be a complete Riemannian manifold of dimension $n$ with smooth 1-form $X$.  Let $m,r,\epsilon,\mathcal{C}>0$ and $\delta\ge 0$, and assume that $\mathrm{Ric}_X^m(g)\ge -(n-1)\delta g$ together with $\sup_{M}\left(|X|+|\mathrm{div}X|\right)\leq\mathcal{C}$. If $L>2r+1$, and there are points $p,q_{\pm}\in M$ satisfying
\begin{equation}
%\label{eq3.1}
d(q_-,p)>L,\quad\quad
d(q_+,p)>L,\quad\quad
%\label{eq3.3}
d(q_-,p)+d(q_+,p)-d(q_-,q_+)<\epsilon,
\end{equation}
then there exists a length space $N$ and a metric ball $B_{r/4}(0,x)\subset \mathbb{R}\times N$ with the product metric, such that
\begin{equation}
d_{GH}\left(B_{r/4}(p),B_{r/4}(0,x)\right)\leq\Upsilon
\end{equation}
where $\Upsilon>0$ may be made arbitrarily small by sending $\epsilon,\delta,L^{-1}\rightarrow 0$.
\end{theorem}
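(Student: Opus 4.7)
The plan is to adapt the Cheeger-Colding strategy to the generalized Bakry-\'Emery setting, exploiting the observation that although $X$ is not exact and no potential $f$ with $X=df$ is available, the finite-$m$ term $-\tfrac{1}{m}X\otimes X$ combined with the hypothesis $|X|+|\mathrm{div}X|\le\mathcal{C}$ compensates for the loss of self-adjointness of the drift Laplacian $\Delta_X:=\Delta+\langle X,\nabla\cdot\rangle$. First I would introduce the normalized distance functions $b_\pm(x):=d(q_\pm,x)-d(q_\pm,p)$ together with the excess $e:=b_++b_-$, and invoke the Bakry-\'Emery Laplacian comparison (extended earlier in the paper) to bound $\Delta_X b_\pm$ from above by an explicit expression in the distance from $q_\pm$ involving $\delta$, $n$, $m$, and $\mathcal{C}$. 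Combined with the generalized Abresch-Gromoll inequality this yields the pointwise estimate $|e|\le\Upsilon$ on $B_r(p)$, where throughout $\Upsilon$ denotes any quantity that can be made arbitrarily small by sending $\epsilon,\delta,L^{-1}\to 0$ with the remaining parameters held fixed.

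Next, using a Lipschitz cutoff $\phi$ supported in a slightly larger ball, the integration by parts identity
\begin{equation}
\int_{M}\phi\,\Delta_X e\,d\mathrm{vol}=-\int_{M}\langle\nabla\phi,\nabla e\rangle\,d\mathrm{vol}+\int_{M}\phi\,e\,\mathrm{div}X\,d\mathrm{vol}
\end{equation}
explicitly controls the non-self-adjoint correction by $\mathcal{C}$ and the pointwise smallness of $e$, converting the Laplacian comparison and excess estimate into an $L^1$ bound $\int_{B_{2r}(p)}|\Delta_X b_\pm|\,d\mathrm{vol}\le\Upsilon$. Applying the generalized Cheng-Yau gradient estimate then upgrades this to the pointwise bound $|\nabla b_\pm|\le 1+\Upsilon$, which is needed both for the Bochner step below and for bounding $\phi\,e\,\mathrm{div}X$ in absolute value.

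The heart of the argument is the Bochner identity for the drift Laplacian, which after splitting $\mathrm{Ric}_X^\infty=\mathrm{Ric}_X^m+\tfrac{1}{m}X\otimes X$ takes the form
\begin{equation}
\tfrac{1}{2}\Delta_X|\nabla b|^2=|\hess b|^2+\tfrac{1}{m}\bigl(X(b)\bigr)^2+\mathrm{Ric}_X^m(\nabla b,\nabla b)+\langle\nabla b,\nabla\Delta_X b\rangle
\end{equation}
for $b=b_\pm$. The extra positive term $\tfrac{1}{m}(X(b))^2$ combines with $|\hess b|^2$ through the pointwise inequality $|\hess b|^2+\tfrac{1}{m}(X(b))^2\ge(\Delta_X b)^2/(n+m)$, which is precisely what replaces the pointwise bound on $|\nabla f|$ used in the gradient-case argument of \cite{WZ}. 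Multiplying by $\phi$, integrating, using the curvature hypothesis $\mathrm{Ric}_X^m\ge-(n-1)\delta g$, and absorbing the $\mathrm{div}X$ boundary terms via $\mathcal{C}$ yields the integral Hessian and gradient-defect estimate
\begin{equation}
\int_{B_r(p)}|\hess b_\pm|^2\,d\mathrm{vol}+\int_{B_r(p)}\bigl||\nabla b_\pm|^2-1\bigr|\,d\mathrm{vol}\le\Upsilon.
\end{equation}

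The final step is to construct a Gromov-Hausdorff approximation between $B_{r/4}(p)$ and a ball in $\mathbb{R}\times N$, where $N$ is the length-space structure induced on the level set $\{b_+=0\}\cap B_{r/2}(p)$. Following the classical route, the integral Hessian and gradient estimates, combined with the segment inequality in its Bakry-\'Emery form (again controlled by $\mathcal{C}$), show that the gradient flow of $b_+$ is $\Upsilon$-close to an isometry in its transverse directions, producing the desired almost splitting. The main obstacle throughout is the bookkeeping of the non-self-adjoint correction terms $\int\phi\,\psi\,\mathrm{div}X\,d\mathrm{vol}$ generated in each integration by parts; this is precisely where the uniform hypothesis $|X|+|\mathrm{div}X|\le\mathcal{C}$ enters essentially, while the $-\tfrac{1}{m}X\otimes X$ contribution in the $m$-Bakry-\'Emery tensor provides the compensating positive $\tfrac{1}{m}(X(b))^2$ in the Bochner identity.
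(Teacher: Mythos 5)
There is a genuine gap at the heart of your argument: you apply the Bochner identity directly to the distance-type functions $b_{\pm}$ and claim $\int_{B_r(p)}|\hess b_{\pm}|^2\le\Upsilon$. But $b_{\pm}$ is only Lipschitz --- it fails to be $C^2$ across the cut locus of $q_{\pm}$, its Hessian is at best a measure with a singular part, and the Bochner formula simply does not apply to it. This is exactly the obstruction that forces the Cheeger--Colding machinery (and the paper) to introduce the $X$-harmonic replacements $h_{\pm}$, defined by $\Delta_X h_{\pm}=0$ on $B_r(p)$ with $h_{\pm}=b_{\pm}$ on the boundary. The paper's Proposition \ref{proposition4.1} then establishes three things in sequence: (i) $|h_{\pm}-b_{\pm}|\le\Psi$ pointwise via the quantitative maximum principle and the Abresch--Gromoll estimate; (ii) $\fint|\nabla h_{\pm}-\nabla b_{\pm}|^2\le\Psi$ by pairing $h_{\pm}-b_{\pm}$ with $\Delta(h_{\pm}-b_{\pm})$ and integrating by parts; and only then (iii) the $L^2$ Hessian bound, by applying Bochner to the \emph{smooth} function $h_{\pm}$ and using that $|\nabla b_{\pm}|^2=1$ a.e.\ to control the boundary terms. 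Your proposal skips this replacement entirely, so the key integral Hessian estimate is not actually obtained. Relatedly, the segment inequality and the almost-Pythagorean lemma in the paper are applied to $h_+$ and to $|\nabla h_+-\nabla b_+|$, not to $b_+$ itself, and the Gromov--Hausdorff map is $\Xi(x)=(h_+(x),\hat{x})$ with $N$ a level set of $h_+$.

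Two further steps as written are incorrect. First, the $L^1$ bound $\int_{B_{2r}(p)}|\Delta_X b_{\pm}|\le\Upsilon$ is false: Laplacian comparison gives only the one-sided bound $\Delta_X b_{\pm}\le\Psi$, while the negative part is controlled merely by $\mathrm{Vol}(\partial B_r(p))+\mathcal{C}\,\mathrm{Vol}(B_r(p))$ via the divergence theorem (see \eqref{eq4.5}); the integral is bounded, not small, and the smallness in the paper's estimate (ii) comes instead from the factor $\sup|h_{\pm}-b_{\pm}|\le\Psi$ multiplying it. Second, the Cheng--Yau gradient estimate cannot ``upgrade'' an $L^1$ Laplacian bound on $b_{\pm}$ to a pointwise gradient bound --- $|\nabla b_{\pm}|=1$ a.e.\ automatically, and in the paper Cheng--Yau is used for an entirely different purpose, namely to construct the cutoff function $\phi$ with $|\Delta_X\phi|+|\nabla\phi|$ bounded (and later to bound the potential in the volume estimate). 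Your identification of the compensating role of the $-\tfrac{1}{m}X\otimes X$ term in the Bochner identity and of the $|\mathrm{div}X|$ bound in the integrations by parts is correct and matches the paper, but without the harmonic replacement the proof does not go through.
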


In analogy with the splitting theorem for nonnegative generalized $m$-Bakry-\'{E}mery Ricci curvature \cite{KWW}, the projection of $X$ onto the $\mathbb{R}$-factor and the Bakry-\'{E}mery Ricci curvature in this direction almost vanish in a weak sense described in Theorem \ref{theoremalmostsplit1}.
In the classical case these facts imply that nonnegative Bakry-\'{E}mery Ricci curvature descends to $N$, and it would be of interest to examine to what extent this holds in the current context. An immediate consequence of the almost splitting theorem asserts that the splitting extends to
limit metric spaces under Gromov-Hausdorff convergence, see Corollary \ref{corsequence}. Moreover as described above, the almost splitting theorem
leads to consequences for the fundamental group, in particular we obtain the following characterization.

\begin{theorem}\label{theorem1.2}
Consider a complete Riemannian manifold $(M,g,X)$ of dimension $n$ with smooth 1-form $X$. Let $m>0$, $\delta\ge 0$, and assume that
\begin{equation}
\mathrm{Ric}_X^m(g)\ge -(n-1)\delta g,\quad
\mathrm{diam}(M)\leq\mathcal{D},
\quad
\mathrm{Vol}(M)\geq\mathcal{V}, \quad \sup_{M}\left(|X|+|\nabla\mathrm{div}X|\right)\leq\mathcal{C}.
\end{equation}
There exists $\delta_0\left(n,m,\mathcal{C},\mathcal{D},\mathcal{V}\right)>0$, such that if $\delta\leq\delta_0$ then $\pi_1(M)$ is almost abelian. In particular, such $M$ admit a finite cover whose fundamental group is abelian.
\end{theorem}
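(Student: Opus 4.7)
The plan is to argue by contradiction, following the strategy of Wei \cite{Wei} and Yun \cite{Yun} from the classical Ricci setting, with the Bakry-\'{E}mery almost splitting theorem (Theorem \ref{theoremalmostsplit}) replacing its Cheeger-Colding counterpart. Suppose the conclusion fails; then there is a sequence of complete Riemannian manifolds $(M_i,g_i,X_i)$ of dimension $n$ satisfying all the hypotheses with $\delta_i\to 0$, yet for which $\pi_1(M_i)$ is not almost abelian.

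First I would invoke the Bakry-\'{E}mery volume comparison theorem established in this paper, together with the bound on $|X_i|$, to obtain Gromov-type precompactness; the bound $\mathrm{Vol}(M_i)\ge\mathcal{V}$ rules out collapse. Extract a subsequence $M_i\to M_\infty$ converging in the Gromov-Hausdorff topology, then pass to the universal covers $(\widetilde{M}_i,\tilde{p}_i)$ equipped with the deck actions of $\pi_1(M_i)$. By equivariant Gromov-Hausdorff precompactness in the style of Fukaya-Yamaguchi, a further subsequence converges equivariantly to $(\widetilde{M}_\infty,\tilde{p}_\infty,G)$, where $G$ is a closed group of isometries acting cocompactly with quotient isometric to $M_\infty$.

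Next I would establish an isometric splitting $\widetilde{M}_\infty=\mathbb{R}^k\times Y$ with $Y$ compact. Since $\pi_1(M_i)$ is infinite (as a consequence of not being almost abelian), one can select sequences of deck transformations whose displacement at a fixed basepoint grows without bound, producing almost-lines in $\widetilde{M}_i$ of length $L_i\to\infty$ with excess $\epsilon_i\to 0$. Theorem \ref{theoremalmostsplit} then yields Gromov-Hausdorff closeness of balls in $\widetilde{M}_\infty$ to balls in $\mathbb{R}\times N$, which upgrades to a genuine isometric splitting of $\widetilde{M}_\infty$; iterating across a maximal independent collection of such asymptotic directions produces the $\mathbb{R}^k$-factor, and $\mathrm{diam}(M_i)\le\mathcal{D}$ forces $Y$ to be compact. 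Applying Wei's argument \cite{Wei} in the Bakry-\'{E}mery form of \cite{KWW} to the limit geometry, $G$ contains a $\mathbb{Z}^k$-subgroup of finite index and is therefore almost abelian. Equivariant convergence transfers this structure back: for $i$ sufficiently large, $\pi_1(M_i)$ inherits an abelian subgroup of uniformly bounded index, contradicting our assumption.

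The main obstacle is the non-gradient nature of $X$. In the gradient case $X=df$ treated in \cite{Jaramillo,WZ}, the potential $f$ admits a Lipschitz limit on $\widetilde{M}_\infty$ and the full Bakry-\'{E}mery structure passes cleanly to the limit, allowing a direct invocation of a splitting theorem there. Here $X_i$ need not converge to any object on $\widetilde{M}_\infty$, so the splitting of the limit must be extracted purely geometrically via Theorem \ref{theoremalmostsplit} and then used as a hypothesis-free statement about $\widetilde{M}_\infty$. The uniform bounds on $|X|$ and $|\nabla\mathrm{div}X|$, together with the favorable sign of the $-\frac{1}{m}X\otimes X$ term, are precisely what allow the almost splitting and volume comparison to hold uniformly along the sequence.
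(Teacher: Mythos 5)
Your proposal takes a genuinely different route from the paper. The paper does not pass to an equivariant Gromov--Hausdorff limit at all: it follows the Anderson--Wei--Yun chain. The engine is the volume comparison Proposition \ref{volest} on the universal cover, which in the non-gradient setting requires constructing a potential $f=-\log u_0$ from a positive solution of $\Delta u_0+\mathrm{div}(u_0X)=0$; from this one gets a generator-length bound and finiteness of isomorphism types of $\pi_1$ (Lemma \ref{1234567}, \`a la Anderson), then polynomial growth of $\pi_1(M_i)$ of degree $\le n+m$ for each fixed $i$ (Lemma \ref{polynomial}, \`a la Wei), and finally Yun's argument (which is where the almost splitting theorem enters) upgrades almost nilpotent to almost abelian. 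The only new content in the proof of Theorem \ref{abelian} itself is converting $\mathrm{Vol}(M)\ge\mathcal{V}$ into a weighted bound $\mathrm{Vol}_f(M)\ge\mathcal{V}'$ via the Cheng--Yau estimate of the Appendix --- this is precisely where the hypothesis $|\nabla\mathrm{div}X|\le\mathcal{C}$ is used, a point your proposal never accounts for.

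Beyond the difference in route, there is a genuine gap at the crux of your argument: the final sentence, ``equivariant convergence transfers this structure back,'' is asserted, not proved, and it is the hardest step in any Fukaya--Yamaguchi-type argument. Algebraic properties of the limit group $G$ do not automatically descend to the approximating discrete groups $\pi_1(M_i)$; in Fukaya--Yamaguchi this transfer occupies the bulk of the proof (short generating systems, induction on the dimension of the limit, stability of the nilpotent structure), and even then what comes out of the limit argument alone is almost \emph{nilpotency}, not almost abelianness. Relatedly, your intermediate claim that $G$ contains $\mathbb{Z}^k$ of finite index is false as stated: the limit group $G$ acting cocompactly on $\mathbb{R}^k\times Y$ need not be discrete (it can contain positive-dimensional compact factors acting on $Y$), so at best one controls $G$ modulo a compact normal subgroup, which makes the transfer back to $\pi_1(M_i)$ even more delicate. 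Finally, your argument makes no essential use of $\mathrm{Vol}(M)\ge\mathcal{V}$ beyond non-collapsing of the base, whereas in the paper this hypothesis is what powers the Anderson-type counting on the universal cover; without some substitute for that counting (or for the transfer machinery above), the proposed proof does not close.
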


Note that the assumptions on $X$ in this theorem and Theorem \ref{maincor}, are stronger than those in Theorem \ref{theoremalmostsplit}.
The proof relies heavily on a volume comparison result, Proposition \ref{volest} below, that plays a role similar to the Bishop-Gromov inequality. Typical volume comparison theorems in the Bakry-\'{E}mery realm use a comparison constant curvature space of higher dimension, which is not sufficient for our purposes since the limit of geodesic ball volume ratios blows up when the dimensions do not coincide. A volume comparison with a model space of the same dimension was achieved by Jaramillo \cite{Jaramillo} in the gradient Bakry-\'{E}mery setting. Surprisingly, establishing such a result in the non-gradient Bakry-\'{E}mery case is quite delicate and requires a new set of ideas. In particular, for topological applications, it is important that the exponential growth of the volume ratio estimate be controlled by the curvature of the model space.
Moreover, our proof requires finite and positive $m$, and it is not clear whether such a result holds when $m=\infty$. If control over the exponential growth of volume ratios is not required, then in the $m=\infty$ case a version of the volume comparison result was given by Zhang and Zhu \cite{ZhangZhu}.

From the volume comparison Proposition \ref{volest}, many classical results for Ricci curvature lower bounds may be extended. As examples of this we obtain generalizations of Anderson's finiteness of fundamental group isomorphism types \cite{Anderson}, and the first Betti number bound of Gallot \cite{Gallot} and Gromov \cite[Theorem 5.21]{Gromov}.

\begin{theorem}\label{theorem1.3''}
Consider a complete Riemannian manifold $(M,g,X)$ of dimension $n$ with smooth 1-form $X$. Let $m>0$, $\delta\ge 0$, and assume that
\begin{equation}\label{tttt}
\mathrm{Ric}_X^m(g)\ge -(n-1)\delta g, \quad\quad\quad\mathrm{diam}(M)\leq\mathcal{D}, \quad\quad\quad \sup_{M}|X|\leq\mathcal{C}.
\end{equation}
\begin{itemize}
\item [(i)] Then there is a function $B\left(\delta,n,m,\mathcal{C},\mathcal{D}\right)$ that yields a bound for the first Betti number and satisfies 
\begin{equation}
b_1(M)\leq B\left(\delta,n,m,\mathcal{C},\mathcal{D}\right),\quad\quad
\lim_{\delta\rightarrow 0}B\left(\delta,n,m,\mathcal{C},\mathcal{D}\right)=n+m.
\end{equation}
More precisely, there is a $\delta_0(n,m,\mathcal{C},\mathcal{D})>0$ such that if $\delta\leq\delta_0$ then $b_1(M)\leq n+m$.
Furthermore, if $X=df_0$ for some $f_0\in C^{\infty}(M)$ and the assumption $\sup_M |X|\leq\mathcal{C}$ is replaced by $\sup_M |f_0|\leq \mathcal{C}$, then the same conclusions hold with $n+m$ replaced by $n$.\smallskip

\item [(ii)] Among the class of manifolds satisfying
\eqref{tttt} together with $\mathrm{Vol}(M)\geq\mathcal{V}$, there are only finitely many isomorphism types of $\pi_1(M)$.
\end{itemize}
\end{theorem}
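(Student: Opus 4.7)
The plan for both parts is to use the volume comparison of Proposition \ref{volest} as a replacement for the Bishop-Gromov inequality. The hypotheses $\mathrm{Ric}_X^m(g)\geq-(n-1)\delta g$ and $\sup_{M}|X|\leq\mathcal{C}$ lift to any Riemannian cover of $M$, so the comparison may be freely applied on universal or abelian covers as the argument demands.

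For part (i), I would pass to the maximal free abelian cover $\pi:\hat{M}\to M$, with deck group $\Gamma\cong\mathbb{Z}^{b_1(M)}$, and run the Gromov-Gallot short-basis argument. Fixing $\hat{p}\in\hat{M}$, a short basis $\gamma_1,\ldots,\gamma_{b_1}$ of $\Gamma$ satisfies $d(\hat{p},\gamma_i\hat{p})\leq 2\mathcal{D}$, and a standard lattice count gives
\begin{equation}
\#\{\gamma\in\Gamma:d(\hat{p},\gamma\hat{p})\leq R\}\geq c(R/\mathcal{D})^{b_1}.
\end{equation}
Since the $\Gamma$-translates of a fundamental domain are pairwise disjoint in $\hat{M}$,
\begin{equation}
c(R/\mathcal{D})^{b_1}\cdot\mathrm{Vol}(M)\leq\mathrm{Vol}_{\hat{M}}\left(B_{R+\mathcal{D}}(\hat{p})\right).
\end{equation}
Proposition \ref{volest} bounds the right-hand side by a volume that is polynomial in $R$ of order $n+m$, multiplied by an exponential correction in $\delta$ and $\mathcal{C}$. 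Choosing $R$ depending only on $n,m,\mathcal{C},\mathcal{D}$ and taking $\delta\leq\delta_0(n,m,\mathcal{C},\mathcal{D})$ forces $b_1\leq n+m$; the explicit function $B(\delta,n,m,\mathcal{C},\mathcal{D})$ with $\lim_{\delta\to 0}B=n+m$ falls out of this bookkeeping. In the gradient case $X=df_0$ with $\sup|f_0|\leq\mathcal{C}$, Proposition \ref{volest} improves (the comparison can be carried out with polynomial order $n$ rather than $n+m$), yielding the stronger conclusion $b_1\leq n$.

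For part (ii), I would follow Anderson's strategy of bounding the complexity of a finite presentation of $\pi_1(M,p)$. Working on the universal cover $\tilde{M}$ with $\Gamma:=\pi_1(M,p)$ acting freely, a Grove-Petersen style short-loop generating set consists of loops at $p$ of length at most $2\mathcal{D}$; its cardinality equals the number of $\Gamma$-translates of a lift $\tilde{p}$ within $B_{3\mathcal{D}}(\tilde{p})$. Proposition \ref{volest} combined with $\mathrm{Vol}(M)\geq\mathcal{V}$ packs these translates into a ball of controlled volume, delivering a uniform upper bound on the number of generators. Every relation can then be represented by a loop of length at most $L(n,m,\mathcal{C},\mathcal{D},\mathcal{V})$, hence a word of uniformly bounded length in these generators. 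Since only finitely many groups admit presentations of bounded complexity, finiteness of isomorphism types of $\pi_1(M)$ follows.

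The principal technical obstacle is that the non-gradient 1-form $X$ introduces an exponential factor in $|X|$ into Proposition \ref{volest} that is absent in the gradient theory; this factor must be bounded purely in terms of $\mathcal{C}$ so as not to inflate the polynomial exponent $n+m$ at the scales used in the short-basis and packing arguments. The $-\frac{1}{m}X\otimes X$ term and the finiteness of $m$ in the Bakry-\'{E}mery tensor are what allow Proposition \ref{volest} to deliver this, and once it is granted the arguments above proceed in close parallel with the classical Ricci-curvature proofs of Gromov, Gallot, and Anderson.
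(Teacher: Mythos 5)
Your part (ii) follows the paper's route (Lemma \ref{1234567}, which is Anderson's presentation-complexity argument run with Proposition \ref{volest} in place of Bishop--Gromov), and your overall identification of Proposition \ref{volest} as the engine is correct. However, your part (i) has a genuine gap. Your counting inequality reads
\begin{equation}
c\left(R/\mathcal{D}\right)^{b_1}\cdot\mathrm{Vol}(M)\leq\mathrm{Vol}_{\hat{M}}\left(B_{R+\mathcal{D}}(\hat{p})\right),
\end{equation}
obtained by packing translates of a fundamental domain. After applying Proposition \ref{volest} the right-hand side is an absolute quantity of order $R^{n+m}$ (for small $\delta$), but the left-hand side still carries the factor $\mathrm{Vol}(M)$, which is \emph{not} bounded below in part (i) --- no hypothesis $\mathrm{Vol}(M)\geq\mathcal{V}$ appears there, and the asserted $\delta_0(n,m,\mathcal{C},\mathcal{D})$ must not depend on a volume bound. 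Since $\mathrm{Vol}(M)$ can be arbitrarily small, fixing $R$ and shrinking $\delta$ produces no contradiction, and letting $R\to\infty$ is forbidden because the exponential correction $e^{\sqrt{\delta}(R^2+R^3)h(\sqrt{\delta}R)}$ blows up for fixed $\delta>0$. This is exactly why the paper does not count fundamental domains in Theorem \ref{betti}: it invokes Gromov's lemma to get a finite-index subgroup whose nontrivial elements displace $\hat{x}$ by more than $\mathrm{diam}(M)$, packs the resulting \emph{disjoint balls of radius $\mathrm{diam}(M)/2$} into $\hat{B}_{r_2}(\hat{x})$, and then applies the ratio form of the volume comparison (Remark \ref{remark91}), in which the unknown manifold volume cancels. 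The fundamental-domain count you propose is the one used for the polynomial-growth statement (Lemma \ref{polynomial}), where a volume lower bound \emph{is} assumed.

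A second, related issue: Proposition \ref{volest} bounds the \emph{weighted} volume $\mathrm{Vol}_{\tilde{f}}$ for the specific potential $f=-\log u_0$, not the Riemannian volume $\mathrm{Vol}_{\hat{M}}$ that appears in your display. Converting between the two requires two-sided pointwise control of $u_0$, which the paper only obtains (in the proof of Theorem \ref{abelian}) via the Cheng--Yau estimate under the stronger hypothesis $|\nabla\mathrm{div}X|\leq\mathcal{C}$ --- not available under \eqref{tttt}. The paper's Betti number argument avoids this entirely by working with weighted volumes throughout: deck transformations preserve $\hat{f}$, so all translated balls have equal $\hat{f}$-volume, and the ratio comparison of Remark \ref{remark91} is stated directly for weighted volumes. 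To repair your part (i), replace the fundamental-domain count by the disjoint-ball packing at $\mathcal{D}$-separated orbit points and use the weighted ratio inequality; the rest of your bookkeeping (fixed large $R=R(n,m,\mathcal{C})$, then $\delta\leq\delta_0$) then goes through as in the paper.
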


The paper is organized as follows. In Section \ref{sec2} basic comparison geometry theorems are extended to the current setting, and in Section \ref{sec3} a preliminary quantitative splitting result known as the Abresch-Gromoll excess estimate is established for generalized Bakry-\'{E}mery Ricci curvature. Section \ref{sec4} is dedicated to Hessian estimates for Busemann function stand-ins known as $X$-harmonic replacement functions. With these estimates, together with a segment inequality proven in Section \ref{sec5}, a quantitative version of the Pythagorean Theorem is given in Section \ref{sec6} from which the desired almost splitting theorem follows. Topological consequences of the almost splitting result are established in Section \ref{sec7}, including the proofs of Theorems \ref{maincor}, \ref{theorem1.2}, and \ref{theorem1.3''}. Lastly, in the appendix we derive an extended version of the Cheng-Yau gradient estimate that is appropriate for our purposes.

\subsection*{Acknowledgements}
The authors would like to thank Christina Sormani for discussions that led to the genesis of this paper.

\section{Comparison Geometry}
\label{sec2} \setcounter{equation}{0}
\setcounter{section}{2}

\subsection{Fundamental comparison theory results}

For the Bakry-\'Emery theory with gradient vector field $X=df$, results analogous to those we need are known, or at least are known when $m=\infty$. Typically, results that hold when $m=\infty$ can be established more easily, and with fewer assumptions, in the case of finite positive $m$. The challenge in this paper is to deal with non-gradient vector fields. This is primarily an issue when computing volume integrals, although is not usually a difficulty for comparison estimates that require only line integrals along geodesics.

Along a unit speed geodesic $\gamma:[0,r]\to M$, consider the function
\begin{equation}
\label{eq2.1}
f_{\gamma}(r):=\int_{\gamma} X\cdot {\bf ds}=\int\limits_0^r \langle X(\gamma(s)),\gamma'(s)\rangle ds\ .
\end{equation}
By following the arguments in \cite[Section 2]{WW}, replacing the $f$ of that work with $f_{\gamma}$, and including the helpful term due to the finite $m>0$, a mean curvature/Laplacian comparison result is obtained under the assumption that $f_{\gamma}$ remains bounded. However, we wish to apply these techniques in the setting of noncompact covering spaces, where $f_{\gamma}$ may become unbounded. Nevertheless we are able to overcome these difficulties by exploiting the finite positive parameter $m$. This leaves open the interesting issue of whether the Laplacian comparison theorem of \cite{WW} can be proved for $X$ not gradient, or equivalently Proposition \ref{lemma2.1} below can be proved when $m=\infty$, assuming only a bound on $|X|$.
A related type of Laplacian comparison, with $m=\infty$, has been established in \cite[Proposition 2.1]{ZhangZhu}.
%Since this is not relevant to our present purposes, we will not pursue this %question here.

Fix $p\in M$ and let $\rho(x)=d(x,p)$ denote the distance function from $p$. Away from the cut locus this function is smooth and satisfies $|d\rho|=1$ as well as $\nabla_{\nabla \rho}\nabla \rho=0$. The second fundamental form and mean curvature of geodesic spheres are given by
\begin{equation}
A= \hess \rho, \quad\quad\quad
H= \Delta \rho =\tr \hess \rho.
\end{equation}
Recall that this mean curvature satisfies the Riccati equation
\begin{equation}
\label{eq2.4}
H'=\partial_{\rho}H=-|A|^2-\mathrm{Ric}(\nabla \rho, \nabla \rho)=-| \mathring{A} |^2 -\frac{1}{(n-1)}H^2-\mathrm{Ric}(\nabla \rho, \nabla \rho),
\end{equation}
where $\mathring{A}$ is the tracefree part of $A$. The Bakry-\'Emery modified version of the Laplacian, or so called drift Laplacian, is $\Delta_{X}=\Delta-\nabla_X$, and the corresponding  modified mean curvature takes the form
\begin{equation}
\label{eq2.5}
H_X=\Delta_X\rho = H-\nabla_X\rho.
\end{equation}
This yields an augmentation of the Riccati equation
\begin{equation}
\label{eq2.6}
\begin{split}
H_X'=& -|\mathring{A}|^2 -\frac{1}{(n-1)}H_X^2  -\mathrm{Ric}_X^m(\nabla \rho,\nabla \rho)-\frac{2}{(n-1)}H_X\nabla_X\rho-\frac{(n+m-1)}{m(n-1)}\left ( \nabla_X \rho\right )^2\\
=&-|\mathring{A}|^2 -\frac{1}{(n+m-1)}H_X^2 -\mathrm{Ric}_X^m(\nabla \rho,\nabla \rho)\\
&-\frac{1}{(n-1)}\left ( \sqrt{\frac{m}{n+m-1}}H_X + \sqrt{\frac{n+m-1}{m}} \nabla_X \rho \right )^2.
\end{split}
\end{equation}
%%EW: Not sure if we'll ever need the last equation. If it doesn't get used in that form, remove it.

The comparison spaces will be taken to be the usual simply connected constant curvature models, albeit with a different dimension $d$ depending on the parameter $m$. However, we will not necessarily choose a space of the same dimension. These metrics may be written in geodesic polar coordinates by
\begin{equation}
\label{eq2.7}
\bar{g}_{d,\lambda}= d\rho^2+\ell^2_{\lambda}(\rho)g_{S^{d-1}} ,\quad\quad\quad
\ell_{\lambda}(\rho)=
\begin{cases}
\frac{1}{\sqrt{\lambda}}\sin\sqrt{\lambda}\rho& {\lambda}>0,\\
\rho& {\lambda}=0,\\
\frac{1}{\sqrt{-{\lambda}}}\sinh\sqrt{-{\lambda}}\rho& {\lambda}<0,
\end{cases}
\end{equation}
where $g_{S^{d-1}}$ is the standard round metric on the sphere $S^{d-1}$. Note that
$\ell$ is the solution of the initial value problem
\begin{equation}
\label{eq2.8}
\ell''(\rho)+\lambda \ell(\rho)=0,\quad\quad\quad
\ell(0)=0,\quad
\ell'(0)=1,
\end{equation}
where for convenience we have dropped the subscript $\lambda$. A computation shows that the mean curvature of geodesic spheres in the comparison space is then
\begin{equation}
\label{eq2.9}
{\bar H}_{d}(\rho):= (d-1)\frac{\ell'(\rho)}{\ell(\rho)}=(d-1)
\begin{cases}
\sqrt{\lambda}\cot\sqrt{\lambda}\rho& {\lambda}>0,\\
1/\rho& {\lambda}=0,\\
\sqrt{-{\lambda}}\coth\sqrt{-{\lambda}}\rho& {\lambda}<0.
\end{cases}
\end{equation}

\begin{proposition}[Mean Curvature Comparison]\label{lemma2.1}
Let $m>0$ and $\delta\ge 0$, and assume that $\mathrm{Ric}_X^m(g)\ge -(n-1)\delta g$. Choose a comparison space \eqref{eq2.7} of dimension $d=n+m$ with $\lambda=-\delta$. Then
\begin{equation}
\label{eq2.10}
H_X(\rho)\le {\bar H}_{n+m}(\rho)
\end{equation}
for all $\rho\ge 0$ such that $H_X(\rho)$ is defined. Furthermore, when viewed as an inequality for the drift Laplacian, the corresponding statement holds at all points in the barrier sense.
\end{proposition}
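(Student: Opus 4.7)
The plan is to extract a clean scalar Riccati inequality for $H_X$ from equation \eqref{eq2.6}, compare it to the corresponding Riccati equation satisfied by $\bar H_{n+m}$ in the model space, and then promote the resulting pointwise inequality to the barrier statement via Calabi's trick. The first step is essentially algebraic: on the right-hand side of \eqref{eq2.6} one discards the two manifestly nonpositive summands, namely the squared tracefree second fundamental form $-|\mathring A|^2$ and the completed-square term
\[
-\tfrac{1}{n-1}\Bigl(\sqrt{\tfrac{m}{n+m-1}}\,H_X+\sqrt{\tfrac{n+m-1}{m}}\,\nabla_X\rho\Bigr)^2,
\]
and invokes the hypothesis $\mathrm{Ric}_X^m(\nabla\rho,\nabla\rho)\ge -(n-1)\delta$. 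The result is
\[
H_X'(\rho)\le -\tfrac{1}{n+m-1}H_X(\rho)^2+(n-1)\delta,
\]
while a short direct computation confirms that $\bar H=\bar H_{n+m}$, with $d=n+m$ and $\lambda=-\delta$, satisfies the matching equation
\[
\bar H'(\rho)=-\tfrac{1}{n+m-1}\bar H(\rho)^2+(n+m-1)\delta.
\]

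Subtracting and setting $w:=H_X-\bar H$ gives
\[
w'+\tfrac{H_X+\bar H}{n+m-1}\,w\le (n-1)\delta-(n+m-1)\delta=-m\delta\le 0.
\]
Two observations close the comparison. First, the initial asymptotics: smoothness of $X$ forces $|\nabla_X\rho|\le|X|=O(1)$ near $p$, so $H_X\sim(n-1)/\rho$ while $\bar H\sim(n+m-1)/\rho$, giving $w(\rho)\sim -m/\rho\to -\infty$. Second, the sign of the forcing term: the mismatch between $(n-1)\delta$ and $(n+m-1)\delta$ appears with the favorable sign, which is precisely what the $-\tfrac{1}{m}X\otimes X$ term in \eqref{bee} is buying. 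A standard first-crossing argument then applies: if $\rho_0>0$ were the smallest point at which $w$ returns to zero from below, then $w'(\rho_0)\ge 0$, contradicting $w'(\rho_0)\le -m\delta\le 0$. This gives strict contradiction when $\delta>0$; when $\delta=0$ one first perturbs the curvature bound to $-(n-1)(\delta+\varepsilon)$, applies the strict case, and sends $\varepsilon\to 0^+$.

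To upgrade the smooth inequality to a barrier statement at cut points I would apply Calabi's upper-support construction: for any $q$ and any minimizing unit-speed geodesic $\gamma$ from $p$ to $q$, set $p_\varepsilon:=\gamma(\varepsilon)$ and $\tilde\rho_\varepsilon(x):=\varepsilon+d(x,p_\varepsilon)$, which is smooth near $q$, dominates $\rho$ in a neighborhood of $q$, and agrees with $\rho$ at $q$. The smooth inequality already established applies to $d(\cdot,p_\varepsilon)$, and continuity of $\bar H_{n+m}$ lets one pass to the limit $\varepsilon\to 0^+$ to obtain the barrier inequality for $\Delta_X\rho$ at $q$. The main subtlety I expect to encounter is that $X$ is not assumed to be a gradient, so the Wei--Wylie argument \cite{WW} cannot be invoked verbatim; however, the favorable $-\tfrac{1}{m}X\otimes X$ contribution with $m<\infty$ completes the square displayed in \eqref{eq2.6} and absorbs every awkward cross-term involving $\nabla_X\rho$, reducing the whole comparison to the same Riccati shape as in the classical case, albeit with dimension shifted from $n$ to $n+m$.
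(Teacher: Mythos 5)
Your proposal is correct. The route is genuinely different from the paper's: you work pointwise with the Riccati inequality already packaged in \eqref{eq2.6}, discard the two nonpositive squares to get $H_X'\le -\tfrac{1}{n+m-1}H_X^2+(n-1)\delta$, note that $\bar H_{n+m}$ solves the same equation with $(n+m-1)\delta$ on the right, and close with a first-crossing argument anchored by the asymptotics $H_X-\bar H_{n+m}\sim -m/\rho\to-\infty$ (plus an $\varepsilon$-perturbation of the curvature bound to handle $\delta=0$, which is legitimate since $\bar H^{(\varepsilon)}_{n+m}\to\bar H^{(0)}_{n+m}$ pointwise as $\varepsilon\to 0^+$). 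The paper instead multiplies the Riccati equation for the \emph{unweighted} $H$ by $\ell^2$, integrates from $0$ to $r$, integrates the $\tfrac12\pounds_Xg(\partial_\rho,\partial_\rho)$ term by parts, and completes the square $\bigl(\sqrt{m}\,\ell'+\tfrac{\ell}{\sqrt m}\langle X,\partial_\rho\rangle\bigr)^2$ under the integral; the choice $d=n+m$ then kills the leftover terms. Your argument is more elementary and self-contained for the statement as posed; the paper's integrated form is not wasted effort, however, since the intermediate identity \eqref{eq2.14}--\eqref{eq2.15} is reused verbatim with $d=n$ in Remark \ref{remark2.2} and again in the proof of Proposition \ref{volest}, where the pointwise Riccati route would not suffice. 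The Calabi upper-support construction you describe for the barrier statement is exactly what the paper invokes. One small bookkeeping remark: the slack $-m\delta$ in your subtracted inequality comes from comparing against the higher-dimensional model (the $(n+m-1)\delta$ versus $(n-1)\delta$ mismatch), while the $-\tfrac1m X\otimes X$ term is what absorbs the $H_X\,\nabla_X\rho$ cross-terms into the completed square in \eqref{eq2.6}; these are two separate gifts of finite $m$, and your write-up slightly conflates them, but this does not affect the proof.
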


\begin{proof}
Observe that \eqref{eq2.4} and the Bakry-\'Emery Ricci curvature lower bound yield
\begin{align}
\label{eq2.11}
\begin{split}
\left ( \ell^2 H\right )' =&2\ell\ell' H+\ell^2H'\\
\le & 2\ell\ell' H-\frac{1}{(n-1)}\ell^2H^2-\ell^2\mathrm{Ric}(\partial_{\rho},\partial_{\rho})\\
\leq& 2\ell\ell'H -\frac{1}{(n-1)}\ell^2H^2+\frac12\ell^2\pounds_Xg(\partial_{\rho},\partial_{\rho}) -\frac{1}{m}\ell^2\langle X,\partial_{\rho}\rangle^2
+(n-1)\ell^2\delta\\
= & -\left ( \frac{\ell H}{\sqrt{n-1}}-\sqrt{n-1}\ell' \right)^2 +(n-1)\ell'^2+\frac12\ell^2\pounds_Xg(\partial_{\rho},\partial_{\rho})\\
& -\frac{1}{m}\ell^2\langle X,\partial_{\rho}\rangle^2 +(n-1)\ell^2\delta.
\end{split}
\end{align}
On the other hand, \eqref{eq2.8} and \eqref{eq2.9} produce
\begin{align}
\label{eq2.12}
\begin{split}
\left ( \ell^2 {\bar H}_d \right )' =& (d-1)\ell'^2 +(d-1)\delta\ell^2 \\
=& (d-n)\ell'^2+(n-1)\ell'^2 +(d-n)\delta\ell^2 +(n-1)\delta\ell^2.
\end{split}
\end{align}
The two terms on the right-hand side of this equation having coefficient $(n-1)$, appear also in \eqref{eq2.11}. Thus, solving for them in \eqref{eq2.12} and inserting the expression into \eqref{eq2.11} gives
\begin{equation}
\label{eq2.13}
\left ( \ell^2 H\right )' \le \left ( \ell^2 {\bar H}_d \right )'-(d-n)\ell'^2+\ell^2 \nabla_{\partial_{\rho}} \langle X,\partial_{\rho} \rangle -(d-n)\ell^2 \delta -\frac{1}{m}\ell^2\langle X,\partial_{\rho}\rangle^2\ .
\end{equation}
Now integrate this along a radial geodesic $\gamma:[0,r]\to M$, and use $\ell^2 H\rightarrow 0$ as well as $\ell^2 \bar{H}_d\rightarrow 0$ when $\rho\rightarrow 0$, to find
\begin{align}
\label{eq2.14}
\begin{split}
\ell^2(r) H(r) \le &\, \ell^2(r) {\bar H}_d(r) +\ell^2(r)\langle X,\partial_\rho \rangle(r) -\int\limits_0^r 2\ell\ell'\langle X,\partial_{\rho} \rangle d\rho\\
& -\int\limits_0^r \left [  (d-n)\left ( \ell'^2(\rho)+\ell^2(\rho)\delta\right ) +\frac{1}{m}\ell^2(\rho)\langle X,\partial_{\rho}\rangle^2\right ] d\rho.
\end{split}
\end{align}
Using \eqref{eq2.5}, this may be rewritten as
\begin{align}
\label{eq2.15}
\begin{split}
\ell^2(r)\left ( H_X(r)-{\bar H}_d(r)\right )\le & -\int\limits_0^r  \left [(d-n) \ell'^2+ 2\ell\ell'\langle X,\partial_{\rho} \rangle +\frac{\ell^2}{m}\langle X,\partial_{\rho}\rangle^2\right ] d\rho \\
 &-(d-n)\delta \int\limits_0^r \ell^2(\rho)d\rho\\
= & -\int\limits_0^r  \left (\sqrt{m}\ell' +\frac{\ell}{\sqrt{m}}\langle X,\partial_{\rho}\rangle\right )^2 d\rho \\
&\, -(d-n-m)\int\limits_0^r \ell'^2d\rho -(d-n)\delta \int\limits_0^r \ell^2(\rho)d\rho.
\end{split}
\end{align}
Then choosing $d=n+m$ yields the desired result.

Lastly, in order to show that the inequality holds for Laplacians in the barrier sense, one may follow the usual technique of constructing support functions by pushing out slightly along minimizing geodesics.
\end{proof}

\begin{remark}\label{remark2.2}
An estimate for the difference of mean curvatures may be obtained for a comparison space of dimension $n$. In particular, if $|X|\leq \mathcal{C}$ then set $d=n$ in the first and second lines of \eqref{eq2.15}, and use that $\ell \ell'\geq 0$ for $\lambda=-\delta\leq 0$ to find
\begin{equation}
\label{eq2.16}
H_X(r)-{\bar H}_n(r)\le \frac{\mathcal{C}}{\ell^2(r)}\int\limits_0^r 2\ell\ell' d\rho =\mathcal{C}.
\end{equation}
\end{remark}

\subsection{Derived comparison results}

%\begin{corollary}\label{corollary2.3}
%Under the conditions of Proposition \ref{lemma2.1}, then $\Delta_X \rho \le {\bar %\Delta}\rho$.
%\end{corollary}

%\begin{proof}
%It is immediate that $\Delta_X \rho=H_X$ and ${\bar \Delta}\rho={\bar H}_{n+m}$. %Furthermore, $\lim_{rho\searrow} \left ( H_X(\rho)-{\bar H}_{n+m}(\rho)\right )=0$. %Thus we can define $\left ( H_X-{\bar H}_{n+m}\right )(0)=0$ by continuity. Then %Proposition \ref{lemma2.1} can be applied.
%\end{proof}

Here various consequences of the mean curvature comparison result will be recorded. Consider a radial function $G_r(\rho)$ which obeys the relations
\begin{equation}
\label{eq2.17}
{\bar \Delta}_d G_r = 1,\quad G_r>0,\quad  G_r'<0 \quad\text{ for }\quad 0<\rho<r ,\quad\quad\quad G_r(r)=G_r'(r)=0,
\end{equation}
where $\bar{\Delta}_d$ denotes the Laplace-Beltrami operator for the constant curvature comparison space of dimension $d$, with $\lambda\leq 0$. Such a function may be easily obtained by integrating the ODE
\begin{equation}
1=\bar{\Delta}_d G_r=\ell^{1-d}\partial_{\rho}\left(\ell^{d-1} \partial_{\rho}G_r\right).
\end{equation}
Observe that this function may also be defined on $M$ using the distance function from $p$. The relations \eqref{eq2.17} then imply the following estimate.

\begin{corollary}\label{corollary2.4}
Under the conditions of Proposition \ref{lemma2.1} with $d=n+m$, we have $\Delta_X G_r\ge 1$.
\end{corollary}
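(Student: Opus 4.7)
The plan is to exploit the radial structure of $G_r$, which converts the drift Laplacian into a one-dimensional expression involving the modified mean curvature $H_X$, and then invoke Proposition \ref{lemma2.1} with $d=n+m$.

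First, at any point outside $\cut(p)$, the function $\rho=d(\cdot,p)$ is smooth with $|\nabla\rho|=1$, so the chain rule together with the definition \eqref{eq2.5} of $H_X$ gives
\begin{equation}
\Delta_X G_r = G_r''(\rho) + G_r'(\rho)\bigl(\Delta\rho - \nabla_X\rho\bigr) = G_r''(\rho) + G_r'(\rho) H_X(\rho).
\end{equation}
The identical radial computation on the constant curvature comparison space of dimension $d=n+m$, combined with the defining relation $\bar{\Delta}_{n+m}G_r=1$ from \eqref{eq2.17}, produces
\begin{equation}
G_r''(\rho) + G_r'(\rho)\bar{H}_{n+m}(\rho) = 1.
\end{equation}

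Subtracting these two identities, the claim $\Delta_X G_r\ge 1$ reduces to the single pointwise inequality
\begin{equation}
G_r'(\rho)\bigl(H_X(\rho) - \bar{H}_{n+m}(\rho)\bigr)\ge 0.
\end{equation}
Both factors have controlled signs: $G_r'(\rho)<0$ on $(0,r)$ by hypothesis \eqref{eq2.17}, and $H_X(\rho)\le \bar{H}_{n+m}(\rho)$ is precisely Proposition \ref{lemma2.1}. Their product is therefore nonnegative, which settles the inequality at smooth points of $\rho$.

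The only subtle point, and the one I expect to require the most care, is the cut locus: $\rho$ (hence $G_r$) generically fails to be smooth on $\cut(p)$. I would dispatch this in the barrier sense, leveraging the fact that Proposition \ref{lemma2.1} already asserts the mean curvature comparison in the barrier sense globally. Since $G_r'$ is a smooth function of a single variable with fixed negative sign, the support-function construction alluded to at the end of the proof of Proposition \ref{lemma2.1} (pushing out slightly along minimizing geodesics from $p$ to produce smooth upper support functions for $\rho$) can be composed with the smooth decreasing function $G_r$, which reverses inequalities to yield smooth lower support functions for $G_r$ at cut points. These support functions then deliver the desired distributional inequality $\Delta_X G_r\ge 1$ on all of $M$, with no additional analytic ingredient beyond what is already in place.
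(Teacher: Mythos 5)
Your proposal is correct and follows essentially the same route as the paper: the paper's proof is exactly the computation $\Delta_X G_r = G_r'(\rho)\Delta_X\rho + G_r''|d\rho|^2 \ge G_r'(\rho)\bar\Delta_{n+m}\rho + G_r''|d\rho|^2 = \bar\Delta_{n+m}G_r = 1$, using $G_r'<0$ and the mean curvature comparison. Your additional care with the cut locus (composing the decreasing $G_r$ with upper support functions for $\rho$ to get lower support functions for $G_r\circ\rho$) is sound and merely makes explicit what the paper leaves implicit via the barrier-sense statement in Proposition \ref{lemma2.1}.
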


\begin{proof}
Note that \eqref{eq2.10} shows $\Delta_X \rho\leq \bar{\Delta}_{n+m}\rho$.
Therefore, a computation combined with \eqref{eq2.17} produces
\begin{equation}
\label{eq2.18}
\begin{split}
\Delta_X G_r = G_r'(\rho) \Delta_X \rho + G_r'' |d\rho |^2
\ge G_r'(\rho) {\bar \Delta}_{n+m} \rho + G_r'' |d\rho|^2
= {\bar \Delta}_{n+m} G_r= 1.
\end{split}
\end{equation}
\end{proof}

Next, note that the volume forms of $(M,g)$ and the comparison space may be expressed in geodesic polar coordinates by
\begin{equation}\label{677}
dV_g =\mathcal{A} d\rho \wedge dV_{S^{d-1}},\quad\quad\quad
dV_{\bar{g}_{d,\lambda}}=\bar{\mathcal{A}} d\rho\wedge dV_{S^{d-1}},
\end{equation}
where $dV_{S^d}$ is the volume form of the round $d$-sphere and $\bar{\mathcal{A}}=\ell^{d-1}$. According to the first variation of area formula
\begin{equation}
HdV_g =\pounds_{\partial_{\rho}}dV_g =\left(\partial_{\rho}\log\mathcal{A}\right) dV_g,
\end{equation}
so that inequality \eqref{eq2.10} can be rewritten as
\begin{equation}
\partial_{\rho}\log\mathcal{A}-\nabla_{X}\rho\leq\partial_{\rho}\log\bar{\mathcal{A}},
\end{equation}
after setting $d=n+m$. From this it follows that if $|X|\leq \mathcal{C}$ then
\begin{equation}
\partial_{\rho}\left(\frac{e^{-\mathcal{C}\rho}{\mathcal A}}{\bar{\mathcal A}}\right)\leq 0.
\end{equation}
This yields an area comparison result.

\begin{corollary}\label{corollary2.5}
Under the conditions of Proposition \ref{lemma2.1}, if $|X|\le \mathcal{C}$ then $e^{-\mathcal{C}\rho}\bar{\mathcal A}^{-1}{\mathcal A}$ is non-increasing.
\end{corollary}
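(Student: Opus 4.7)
The plan is to simply expand the computation sketched immediately before the statement of the corollary. The starting point is the mean curvature comparison $H_X(\rho)\le\bar H_{n+m}(\rho)$ furnished by Proposition \ref{lemma2.1}, combined with the definition $H_X=H-\nabla_X\rho$ and the first variation of area $H=\partial_\rho\log\mathcal{A}$. Substituting, one obtains the pointwise inequality
\begin{equation*}
\partial_\rho\log\mathcal{A}\;\le\;\partial_\rho\log\bar{\mathcal{A}}+\nabla_X\rho
\end{equation*}
away from the cut locus of $p$, which is the display already recorded in the text just above the corollary.

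Next, because $|\nabla\rho|=1$ off the cut locus, the Cauchy-Schwarz inequality gives $\nabla_X\rho=\langle X,\nabla\rho\rangle\le|X|\le\mathcal{C}$, and hence
\begin{equation*}
\partial_\rho\bigl[\log\mathcal{A}-\log\bar{\mathcal{A}}-\mathcal{C}\rho\bigr]\;\le\;0.
\end{equation*}
Exponentiating and rewriting the left-hand side as $\partial_\rho\log\bigl(e^{-\mathcal{C}\rho}\mathcal{A}/\bar{\mathcal{A}}\bigr)$ then yields
\begin{equation*}
\partial_\rho\!\left(\frac{e^{-\mathcal{C}\rho}\,\mathcal{A}}{\bar{\mathcal{A}}}\right)\;\le\;0,
\end{equation*}
which is precisely the asserted monotonicity as a function of $\rho$ along a fixed radial direction.

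The one genuine subtlety, and the main obstacle to a careful writeup, is the behavior at the cut locus $\cut(p)$, where $\rho$ fails to be smooth and $\mathcal{A}$ is only defined on the star-shaped open set $M\setminus\cut(p)$. I would handle this exactly as in the standard Bishop--Gromov setting: Proposition \ref{lemma2.1} already provides the mean curvature inequality in the barrier sense at cut points, so that the inequality for $\partial_\rho\log\mathcal{A}$ holds distributionally across $\cut(p)$, and the monotonicity can either be read directly in the distributional sense or, equivalently, along each radial geodesic $\gamma_v(\rho)$ on the interval $[0,\mathrm{cut}(v))$ where $\mathcal{A}$ is smooth. Since $\cut(p)$ has measure zero, this pointwise monotonicity along rays is exactly what is needed to feed into the subsequent integrated volume comparison.
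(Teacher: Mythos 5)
Your argument is exactly the paper's: the corollary is deduced from the mean curvature comparison $H_X\le\bar H_{n+m}$ via $H=\partial_\rho\log\mathcal{A}$ and the bound $\nabla_X\rho\le|X|\le\mathcal{C}$, yielding $\partial_\rho\bigl(e^{-\mathcal{C}\rho}\mathcal{A}/\bar{\mathcal{A}}\bigr)\le0$ along each radial direction, with the cut locus handled in the standard Bishop--Gromov fashion. The proposal is correct and adds nothing beyond (a slightly more explicit treatment of) what the paper records in the displays immediately preceding the corollary.
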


To finish this section, we derive the analogue of relative volume comparison. Let $\mathrm{Vol}(B_{\rho})$ and $\overline{\mathrm{Vol}}_d(B_{\rho})$ denote the volume of geodesic balls of radius $\rho$ in $(M,g)$ and the comparison space of dimension $d$, respectively. Furthermore, the corresponding volume computed with respect to the weighted volume measure $e^{\mathcal{C}\rho}dV_{\bar{g}_{d,\lambda}}$ will be labeled $\overline{\mathrm{Vol}}_d^{\mathcal{C}}(B_{\rho})$.

\begin{corollary}\label{corollary2.6}
Under the conditions of Proposition \ref{lemma2.1}, if $|X|\le \mathcal{C}$ then for $0<\rho\le r$ we have
\begin{equation}
\label{eq2.20}
\frac{\mathrm{Vol}(B_{\rho})}{\overline{\mathrm{Vol}}_{n+m}^{\mathcal{C}}(B_{\rho})} \ge \frac{\mathrm{Vol}(B_r)}{\overline{\mathrm{Vol}}_{n+m}^{\mathcal{C}}(B_r)}.
\end{equation}
\end{corollary}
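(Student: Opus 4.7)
The plan is to deduce the relative volume comparison from the pointwise area comparison of Corollary \ref{corollary2.5} via the standard Bishop--Gromov integration argument, adapted to handle the dimensional mismatch between $M$ (of dimension $n$) and the comparison model (of dimension $n+m$).

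First, fix polar coordinates centered at $p$ and, for each $\theta\in S_p^{n-1}\subset T_pM$, let $c(\theta)\in(0,\infty]$ denote the cut distance in direction $\theta$. Writing $dV_g=\mathcal{A}(\rho,\theta)\,d\rho\wedge dV_{S^{n-1}}$ on $\{\rho<c(\theta)\}$ and extending $\mathcal{A}(\rho,\theta):=0$ beyond the cut, Corollary \ref{corollary2.5} asserts that $\rho\mapsto e^{-\mathcal{C}\rho}\mathcal{A}(\rho,\theta)/\ell^{n+m-1}(\rho)$ is non-increasing on $(0,c(\theta))$, and this monotonicity persists through the extension since the quantity drops to zero there.

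Next I would integrate out the angular variable. Set $A(\rho):=\int_{S^{n-1}}\mathcal{A}(\rho,\theta)\,dV_{S^{n-1}}$ and $\bar A^{\mathcal{C}}(\rho):=\omega_{n+m-1}\,e^{\mathcal{C}\rho}\,\ell^{n+m-1}(\rho)$, where $\omega_{n+m-1}$ is the area of the round unit $(n+m-1)$-sphere, so that $\mathrm{Vol}(B_\rho)=\int_0^\rho A(s)\,ds$ and $\overline{\mathrm{Vol}}_{n+m}^{\mathcal{C}}(B_\rho)=\int_0^\rho \bar A^{\mathcal{C}}(s)\,ds$. The identity
\begin{equation}
\frac{A(\rho)}{\bar A^{\mathcal{C}}(\rho)}=\frac{1}{\omega_{n+m-1}}\int_{S^{n-1}}\frac{e^{-\mathcal{C}\rho}\mathcal{A}(\rho,\theta)}{\ell^{n+m-1}(\rho)}\,dV_{S^{n-1}}
\end{equation}
exhibits the area ratio as a nonnegative average of pointwise monotone quantities, so $\rho\mapsto A(\rho)/\bar A^{\mathcal{C}}(\rho)$ is itself non-increasing.

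To conclude, I would invoke the standard elementary lemma: if $f,g\colon(0,\infty)\to[0,\infty)$ satisfy $g>0$ and $f/g$ is non-increasing, then $\rho\mapsto \int_0^\rho f(s)\,ds\,/\,\int_0^\rho g(s)\,ds$ is non-increasing as well. Applying this with $f=A$ and $g=\bar A^{\mathcal{C}}$ yields \eqref{eq2.20}. The main point that needs care is the dimensional mismatch between the $(n-1)$-dimensional angular integration on $M$ and the $(n+m-1)$-sphere encoded in $\ell^{n+m-1}$; this is reconciled by the bookkeeping in the displayed identity above, where the factor $1/\omega_{n+m-1}$ absorbs the discrepancy and the result is a genuine average of monotone functions. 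Treatment of the cut locus by extension by zero is then routine.
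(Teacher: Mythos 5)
Your argument is correct and is essentially the paper's own proof: the paper simply cites the standard Bishop--Gromov integration argument (Zhu, pp.~226--228) with $\bar{\mathcal{A}}$ replaced by $e^{\mathcal{C}\rho}\bar{\mathcal{A}}$ and Corollary \ref{corollary2.5} as input, which is exactly the monotone-ratio-of-integrals argument you spell out. Your explicit bookkeeping of the dimensional mismatch via the factor $1/\omega_{n+m-1}$ and the extension of $\mathcal{A}$ by zero past the cut locus are the right (and standard) details.
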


\begin{proof}
This follows from the standard argument, see for example \cite[pp. 226--228]{Zhu}.
The only required modifications are to replace $\bar{\mathcal{A}}$ with $e^{C\rho}\bar{\mathcal{A}}$ and use Corollary \ref{corollary2.5}.
\end{proof}

\begin{remark}\label{remark2.7}
Due to the exponential factor, weighted and unweighted volumes are \emph{equivalent} in that for arbitrarily small balls, they are arbitrarily close. Hence, noncollapsing with respect to weighted volume implies noncollapsing in the sense of ordinary Riemannian volume.
\end{remark}

\section{An Abresch-Gromoll Inequality}
\label{sec3} \setcounter{equation}{0}
\setcounter{section}{3}

%\subsection{Statement of the inequality}

In the setting of the classical splitting theorem of Cheeger-Gromoll \cite{CheegerGromoll}, a primary hypothesis concerns the existence of a line.
The analogous hypothesis within the context of the almost spitting theorem, is the existence of three points which almost lie on a line, or form a thin triangle. More precisely, let $q_{\pm}\in M$ be points of large distance from one another, with $p\in M$ approximately in the middle and satisfying $L>0$ denotes the separation parameter and $\epsilon>0$ is small then these points should satisfy
\begin{equation}\label{eq3.1}
%\label{eq3.1}
d(q_-,p)>L,\quad\quad
d(q_+,p)>L,\quad\quad
%\label{eq3.3}
E(p):=d(q_-,p)+d(q_+,p)-d(q_-,q_+)<\epsilon,
\end{equation}
for a large separation parameter $L>0$ and small parameter $\epsilon>0$.
The quantity $E$ is called the \emph{excess function}, and may be interpreted as the sum of finite scale Busemann functions. More precisely, observe that the counterpart of Busemann functions in the setting of the almost splitting theorem is given by $b_+(x)=d(x,q_+)-d(q_+,p)$ and $b_-(x)=d(x,q_-)-d(q_-,p)$, with
\begin{equation}\label{8io}
b_+(x)+b_-(x)=E(x)-E(p).
\end{equation}
A main step in the Cheeger-Gromoll splitting result is to show that the sum of Busemann functions vanishes. For the almost splitting theorem the corresponding step is to show that the combination \eqref{8io} is small, in fixed balls centered at $p$. Since $E(p)$ is assumed to be small by assumption, this entails proving that $E(x)$ remains small in these domains. This is the content of the Abresch-Gromoll excess estimate \cite{AG} (see also \cite[Theorem 9.1]{Cheeger}). The proof relies heavily on Laplacian/mean curvature comparison, and so uses in a strong way the Bakry-\'Emery-Ricci lower bound as well as the fact that $\delta$ is small. The excess estimate is a tool to be used to obtain control on finite scale Busemann functions or rather their harmonic replacements.

The purpose of this section is to establish an Abresch-Gromoll inequality
for non-gradient Bakry-\'Emery Ricci curvature, without a smallness condition on $X$ (as is assumed in \cite[Theorem 5.3]{ZhangZhu}).
%A version of this result with $m=\infty$ and a smallness condition on $X$ has %been %achieved in \cite[Theorem 5.3]{ZhangZhu}.
The gradient Bakry-\'Emery version has been proven in \cite[Theorem 2.5]{Jaramillo}, and follows closely the original arguments \cite{AG,Cheeger}. The proof here will proceed along similar lines, making note of appropriate changes required to accommodate the non-gradient field $X$.
Following other authors, we adopt the notation $\Psi\left(\epsilon_1,\dots,\epsilon_k\big\vert c_1,\dots,c_l \right)$ to denote a positive function whose limit vanishes when the first $k$ arguments are simultaneously taken to zero
\begin{equation}
\label{eq3.4}\lim_{\epsilon_1,\dots,\epsilon_k\to 0} \Psi\left(\epsilon_1,\dots,\epsilon_k\big\vert c_1,\dots,c_l \right)=0.
\end{equation}

\begin{theorem}\label{theorem3.1}
Let $m,r>0$ and $\delta\ge 0$, and assume that $\mathrm{Ric}_X^m(g)\ge -(n-1)\delta g$. If $L>2r+1$, and there are points $p,q_{\pm}\in M$ satisfying \eqref{eq3.1}, then
\begin{equation}
\label{eq3.6}
E(x)\le \Psi\left(\epsilon,\delta,L^{-1}\big\vert n,m,r\right)
\end{equation}
for all $x\in B_r(p)$.
\end{theorem}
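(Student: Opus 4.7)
The strategy is to adapt the original Abresch--Gromoll argument \cite{AG} (cf.\ \cite[\S 9]{Cheeger}, and the gradient Bakry--\'Emery version \cite[Theorem 2.5]{Jaramillo}), replacing the classical Laplacian comparison by Proposition \ref{lemma2.1} and working throughout with the drift Laplacian $\Delta_X$. The first step is to show that $E$ is ``almost subharmonic'' on $B_r(p)$. The triangle inequality gives $d(x,q_\pm) \geq L - r > r+1$ for every $x \in B_r(p)$, so Proposition \ref{lemma2.1} applied to each distance function $\rho_\pm(x) := d(x, q_\pm)$ yields, in the barrier sense,
\begin{equation*}
\Delta_X E(x) = \Delta_X \rho_+(x) + \Delta_X \rho_-(x) \leq 2\bar H_{n+m}(L-r) =: \beta,
\end{equation*}
where monotonicity of $\bar H_{n+m}$ in the nonpositive curvature model $\lambda = -\delta$ was used. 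Explicitly, $\beta = 2(n+m-1)\sqrt{\delta}\coth(\sqrt{\delta}(L-r))$, so $\beta \to 0$ as $\delta, L^{-1} \to 0$ with $r$ fixed.

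The second step is to combine this differential inequality with the non-negativity of $E$, the bound $E(p) < \epsilon$, and the $2$-Lipschitz property of $E$ to obtain the desired pointwise bound. Following \cite{AG}, I would build a barrier from the radial function $G_R$ of Corollary \ref{corollary2.4}: for any $R < L-r$ and $y_0 \in \bar B_r(p)$, the function $G_R(d(\cdot,y_0))$ is positive on the punctured ball $B_R(y_0)\setminus\{y_0\}$, vanishes with its derivative on $\partial B_R(y_0)$, and satisfies $\Delta_X G_R \geq 1$ in the barrier sense. One then applies the maximum principle (valid in the barrier sense by the support-function argument used at the end of the proof of Proposition \ref{lemma2.1}) to a suitable combination of $E$ and $\beta G_R$ on subdomains centered at a point maximizing $E$ over $\bar B_r(p)$, and optimizes the free radius $R$. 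The outcome is an Abresch--Gromoll-type estimate with effective dimension $n+m$ in place of $n$, giving $E(x) \leq \Psi(\epsilon, \delta, L^{-1}\,\big|\,n, m, r)$ on $B_r(p)$ with $\Psi \to 0$ in the prescribed limit.

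The principal obstacle is that $X$ is neither assumed to be a gradient nor bounded in norm: the gradient case is handled in \cite{Jaramillo} using a bound on $|f|$ (where $X = df$), and the small-$|X|$ case with $m = \infty$ is treated in \cite[Theorem 5.3]{ZhangZhu}, but both approaches are unavailable here. The new input that circumvents this is Proposition \ref{lemma2.1} itself, which holds for non-gradient $X$ with \emph{no} bound on $|X|$ because the finite positive $m$ supplies the term $-\tfrac{1}{m}\ell^2\langle X,\partial_\rho\rangle^2$; together with $2\ell\ell'\langle X,\partial_\rho\rangle$ this forms the perfect square $\bigl(\sqrt{m}\,\ell' + \tfrac{1}{\sqrt{m}}\ell\langle X,\partial_\rho\rangle\bigr)^2$ in \eqref{eq2.15}, and choosing the effective dimension $d = n + m$ eliminates every trace of $X$ from the comparison. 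The resulting $|X|$-independent bound $H_X \leq \bar H_{n+m}$ is what lets the entire Abresch--Gromoll scheme proceed in the present non-gradient, non-small setting.
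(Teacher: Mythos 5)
Your proposal is correct and follows essentially the same route as the paper: first the drift-Laplacian comparison of Proposition \ref{lemma2.1} gives $\Delta_X E\le \bar H_{n+m}(\rho_-)+\bar H_{n+m}(\rho_+)\le\Psi(\delta,L^{-1}\,\vert\, n,m,r)$ in the barrier sense, and then the Abresch--Gromoll barrier/quantitative maximum principle with the radial function $G_R$ of \eqref{eq2.17} propagates $E(p)<\epsilon$ to all of $B_r(p)$, with the free small radius $r_0$ optimized at the end. The only details you elide, which the paper supplies, are the perturbation $U=E-\sqrt{\epsilon}f$ needed to make the test function strictly negative at $p$ and the separate Lipschitz treatment of points with $d(x,p)\le r_0$; these are routine and your identification of the $|X|$-free comparison $H_X\le\bar H_{n+m}$ as the key new input is exactly right.
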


%\subsection{Proof of Theorem \ref{theorem3.1}}

The proof depends on a result of Abresch and Gromoll \cite{AG} sometimes referred to as a \emph{quantitative maximum principle}. This yields an explicit upper bound in terms of the function $G_r$ of \eqref{eq2.17}, for Lipschitz functions admitting a bound for their drift Laplacian. The arguments of \cite[Theorem 8.12]{Cheeger} and \cite[Proposition 2.4]{Jaramillo} can be adapted straightforwardly. In particular, the proof of \cite[Proposition 2.4]{Jaramillo} goes through, with $\Delta_f$ replaced by $\Delta_X$ and $\Delta_H^{n+4k}$ replaced by ${\bar \Delta}$, so the comparison space is $(n+m)$-dimensional. The condition $|f|\le k$ in that paper has no meaning in our context and can be dropped and not replaced, since no condition of this nature is needed for Laplacian comparison with finite positive $m$.

\begin{lemma}[Quantitative Maximum Principle]\label{proposition3.2}
Let the conditions of Proposition \ref{lemma2.1} hold, and assume that $U:{\overline {B_r(x)}}\to {\mathbb R}$ is a Lipschitz function on a closed ball of radius $r>0$ about $x$ with Lipshitz constant $c$, such that $U(x_0)\leq 0$ for some $x_0\in B_r(x)\setminus \{x\}$ and $U\big\vert_{\partial B_r(x)}\ge 0$. Furthermore, let $b>0$ be such that $\Delta_X U \le b$ in the barrier sense on ${\overline {B_r(x)}}$. Then $U(x)\le bG_r(r_0)+cr_0$ for all $r_0\in (0,d(x,x_0))$.
\end{lemma}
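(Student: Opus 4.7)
The plan is a standard barrier-comparison argument: combine $U$ with the explicit radial supersolution $G_r$ from Corollary~\ref{corollary2.4}, and invoke the weak maximum principle on an annular domain around $x$ to trap the supposed large value of $U(x)$ between the prescribed boundary data and the interior ceiling $U(x_0)\le 0$. The condition $|f|\le k$ of \cite{Jaramillo} has no counterpart here since the finite-$m$ Laplacian comparison of Proposition~\ref{lemma2.1} requires no such boundedness.

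\emph{Step 1 (barrier setup on an annulus).} Extend $G_r$ to $\overline{B_r(x)}\setminus\{x\}$ by pulling back along $\rho(\cdot)=d(x,\cdot)$. Since $G_r$ is singular at $\rho=0$ but smooth and positive on $(0,r)$, I would work on the annulus $A:=B_r(x)\setminus\overline{B_{r_0}(x)}$, where $\rho\ge r_0>0$. Set $W:=U-bG_r$ on $\overline{A}$. From Corollary~\ref{corollary2.4} one has $\Delta_X G_r\ge 1$ in the barrier sense, while by hypothesis $\Delta_X U\le b$ in the barrier sense, so $\Delta_X W\le 0$ in the barrier sense on $A$. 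Concretely, at any $z\in A$ and any $\varepsilon>0$, one combines an upper barrier for $U$ at $z$ with drift Laplacian at most $b+\varepsilon$ and a lower barrier for $G_r$ at $z$ with drift Laplacian at least $1-\varepsilon$ (both produced by the standard device of pushing out slightly along minimizing geodesics, as in the proof of Proposition~\ref{lemma2.1}) to obtain a smooth upper barrier of $W$ at $z$ whose drift Laplacian is at most $(1+b)\varepsilon$.

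\emph{Step 2 (the minimum must lie on the inner sphere).} The weak maximum principle for $\Delta_X$-superharmonic functions in the barrier sense then forces $\min_{\overline{A}}W$ to be attained on $\partial A=\partial B_r(x)\cup \partial B_{r_0}(x)$. On the outer sphere $G_r(r)=0$ and $U\ge 0$, so $W\ge 0$. Meanwhile $x_0\in A$ because $r_0<d(x,x_0)<r$, and
\[
W(x_0)=U(x_0)-bG_r\bigl(d(x,x_0)\bigr)\le -bG_r\bigl(d(x,x_0)\bigr)<0,
\]
since $G_r>0$ on $(0,r)$. Hence the minimum of $W$ over $\overline A$ cannot be attained on $\partial B_r(x)$, so there exists $y\in\partial B_{r_0}(x)$ with $W(y)\le W(x_0)\le 0$, i.e.\ $U(y)\le bG_r(r_0)$. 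The Lipschitz condition $|U(x)-U(y)|\le c\,d(x,y)=cr_0$ then yields $U(x)\le U(y)+cr_0\le bG_r(r_0)+cr_0$, which is the desired inequality for every $r_0\in(0,d(x,x_0))$.

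\emph{Main obstacle.} The only genuine subtlety is rigorously invoking the weak minimum principle in the barrier sense: neither $U$ (merely Lipschitz, with a barrier inequality) nor $G_r\circ\rho$ (depending on a distance function that fails to be $C^2$ across $\cut(x)$) is a priori smooth on $A$. The standard resolution is either to cite a barrier-sense maximum principle directly, or to reduce to the smooth version by adding a small strictly subharmonic perturbation $\eta\theta(\rho)$ (with $\theta$ chosen so that $\Delta_X\theta>0$ after applying the mean curvature comparison), applying the classical smooth maximum principle to rule out an interior minimum of $W+\eta\theta$, and sending $\eta\to 0$. Once that bookkeeping is in place, the rest of the argument is purely algebraic.
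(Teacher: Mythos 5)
Your proof is correct and follows essentially the same route as the paper: compare $U$ with the barrier $bG_r\circ\rho$, apply the maximum principle on the annulus between $\partial B_{r_0}(x)$ and $\partial B_r(x)$ to locate a point $y\in\partial B_{r_0}(x)$ where $U(y)\le bG_r(r_0)$, and finish with the Lipschitz bound $U(x)\le U(y)+cr_0$. The only differences are presentational --- the paper phrases the middle step as a contradiction rather than via the location of the minimum, and you spell out the barrier-sense regularity issue that the paper leaves implicit.
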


\begin{proof}
Let $y\in B_r(x)$ and $\rho(y)=d(y,x)$. Observe that the function $V(y)=U(y)-bG_r(\rho(y))$ satisfies
\begin{equation}
\Delta_X V\leq 0 \quad\text{ in }\quad B_r(x)\setminus\{x\},\quad\quad V\geq 0\quad\text{ on }\quad\partial B_r(x),\quad\quad V(x_0)<0.
\end{equation}
We claim that for each $r_0\in (0,d(x,x_0))$ there exists a $y_{r_0}\in \partial B_{r_0}(x)$ with $V(y_{r_0})<0$. If not, $V\geq 0$ on $\partial B_{r_0}(x)$, and the maximum principle then implies that $V\geq 0$ on $B_{r}(x)\setminus B_{r_0}(x)$, contradicting $V(x_0)<0$. Using now this claim and the Lipschitz constant $c$, yields the desired conclusion
\begin{equation}
U(x)\leq U(y_{r_0})+c r_0< bG_{r}(r_0)+c r_0.
\end{equation}
\end{proof}

We can now prove Theorem \ref{theorem3.1}. In \cite{Jaramillo} it is stated that, in the gradient Bakry-\'Emery setting, this follows directly from the arguments of \cite[Proposition 9.1]{Cheeger} together with a version of the quantitative maximum principle and Laplacian comparison. This is not completely clear to us, and so we give the proof here in the general nongradient case.

\begin{proof}[Proof of Theorem 3.1]
For $y\in B_{2r+1}(p)$ let $\rho_{\pm}(y)=d(y,q_{\pm})$ and apply Proposition \ref{lemma2.1} to find
\begin{align}
\label{eq3.7}
\begin{split}
\Delta_X E(y) = &\, \Delta_X \rho_{-}(y)+\Delta_X \rho_{+}(y)\\
\le &\, \bar{H}_{n+m}(\rho_{-}(y))+\bar{H}_{n+m}(\rho_{+}(y))\\
\le &\, \Psi\left ( \delta,L^{-1}\big\vert n,m,r\right ).
\end{split}
\end{align}
Let $x\in B_r(p)$, and choose a nonnegative function
$f\in C^{\infty}(\overline{B_{r+1}(x)})$ which vanishes near $\partial B_{r+1}(x)$ and is positive at $p$.
Then the Lipschitz function $U(y)=E(y)-\sqrt{\epsilon} f(y)$ satisfies
\begin{equation}
\Delta_{X}U\leq \Psi+a\sqrt{\epsilon}\quad\text{on}\quad B_{r+1}(x),\quad\quad
U\geq 0\quad\text{on}\quad \partial B_{r+1}(x),\quad\quad U(p)<\epsilon-\sqrt{\epsilon}f(p)<0,
\end{equation}
for some constant $a>0$ and $\epsilon$ sufficiently small, and where \eqref{eq3.1} was used.

Let $r_0\in(0,r)$. For $d(x,p)>r_0$, the quantitative maximum principle may be applied with $x_0=p$, $b=\Psi+a\sqrt{\epsilon}$, and $c=3$, to find
\begin{equation}\label{eq3.8}
E(x)\leq
\sqrt{\epsilon}f(x)+\left(\Psi+a\sqrt{\epsilon}\right) G_{r+1}(r_0)+3r_0.
\end{equation}
On the other hand, for $d(x,p)\leq r_0$, the Lipschitz bound produces
\begin{equation}\label{eq3.800}
E(x)\leq E(p)+3r_0<\epsilon +3r_0.
\end{equation}
Although $G_{r+1}(r_0)\rightarrow\infty$ as $r_0\rightarrow 0$, by choosing $r_0$ small depending on $\epsilon$ and $\Psi$, the right-hand side of \eqref{eq3.8} and \eqref{eq3.800} may be made arbitrarily small by sending $\epsilon,\delta,L^{-1}\rightarrow 0$.
\end{proof}

%% EW: Perhaps it's better to remove this argument in favour of the argument given in Wang and Zhu.

\section{Hessian Bounds for the $X$-Harmonic Replacement}
\label{sec4} \setcounter{equation}{0}
\setcounter{section}{4}

In analogy with the classical Cheeger-Gromoll splitting theorem, one would like to show that the finite distance Busemann functions $b_{\pm}$ have small Hessians, so that they are `almost linear'. However lack of regularity poses a difficulty, and thus a smooth replacement is used in the form of $X$-harmonic functions. Namely, define the $X$-harmonic replacements by
\begin{equation}
\label{eq4.3}
\Delta_X h_{\pm}=\text{ }\! 0 \quad\text{ on }\quad B_r(p),\quad\quad\quad\quad
h_{\pm}=\text{ }\! b_{\pm}\quad\text{ on }\quad\partial B_r(p).
\end{equation}
Note that $\partial B_{r}(p)$ may not be smooth. In this case, in order to maintain regularity of the solution to the Dirichlet problem, we will approximate $B_{r}(p)$ by a domain with smooth boundary. This may be achieved in various ways. For instance, since $\partial B_{r}(p)$ is compact it may be covered by a finite number of smooth geodesic balls $\{B_j\}$ of arbitrarily small radius. Then $B_{r}(p)\cup_{j} B_j$ approximates $B_{r}(p)$, and has a piecewise smooth boundary that can easily be made $C^{\infty}$ by rounding off the creases.
In light of this discussion, we will proceed assuming that a domain with smooth boundary has been used in place of $B_r(p)$ when necessary, although explicit mention of this will not be made below.

The primary estimates for the smooth replacement function are based on the Laplacian comparison result Proposition \ref{lemma2.1} and the Abresch-Gromoll inequality Theorem \ref{theorem3.1}. In the following statement, integrals with a slash indicate the average value of the integrand over the domain of integration. Hessian estimates were previously established in the case of gradient $X$ in \cite{Jaramillo,WZ}, and for small $X$ in \cite{ZhangZhu}.

\begin{proposition}[Hessian Estimate]\label{proposition4.1}
%Let $m,r>0$ and $\delta\ge 0$, and assume that $\mathrm{Ric}_X^m(g)\ge -(n-1)\delta %g$. If $L>2r+1$, and there are points $p,q_{\pm}\in M$ satisfying \eqref{eq3.1},
Assume that the hypotheses of Theorem \ref{theorem3.1} hold, let $|X|+|\mathrm{div}X|\leq\mathcal{C}$ and $\mathrm{Vol}(B_r(p))\geq v>0$, then
\begin{itemize}
\item [(i)] $|h_\pm(x)-b_\pm(x)|\le \Psi\left ( \epsilon,\delta, L^{-1}\big \vert n,m,r\right )$\text{ }\text{ }\text{ }\text{ for }\text{ }\text{ }\text{ }$x\in B_{r}(p)$,
\item [(ii)] $\mathop{\mathlarger{\fint}_{B_r(p)}} \left \vert \nabla h_\pm-\nabla b_\pm\right \vert^2 dV_g\le \Psi\left ( \epsilon,\delta, L^{-1}\big \vert n,m,r,v,\mathcal{C}\right )$,
\item [(iii)]$\mathop{\mathlarger{\fint}_{B_r(p)}} \left \vert \hess h_\pm \right \vert^2 dV_g \le \Psi\left ( \epsilon,\delta, L^{-1}\big \vert n,m,r,v,\mathcal{C}\right)$.
\end{itemize}
\end{proposition}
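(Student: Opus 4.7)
The overall plan follows the Cheeger--Colding template as adapted to the Bakry-\'Emery setting in \cite{Jaramillo,WZ,ZhangZhu}, but carried out for general non-gradient $X$. For (i), Proposition~\ref{lemma2.1} gives $\Delta_X b_\pm \le \bar{H}_{n+m}(\rho_\pm) = \Psi(\delta, L^{-1}\mid n,m,r)$ in the barrier sense, since $\rho_\pm \ge L-r$ on $B_r(p)$. Since $h_++h_-$ is $\Delta_X$-harmonic with boundary values $b_++b_-=E-E(p)$ of size $\Psi$ (by Theorem~\ref{theorem3.1} and the assumption $E(p)<\epsilon$), the maximum principle gives $|h_++h_-|\le\Psi$ throughout $B_r(p)$. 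The quantitative maximum principle Lemma~\ref{proposition3.2} applied to $b_\pm-h_\pm$ then yields $h_\pm\ge b_\pm - \Psi$, and the reverse inequality $h_\pm \le b_\pm + \Psi$ follows from
\begin{equation*}
h_\pm = (h_++h_-)-h_\mp \le \Psi - (b_\mp-\Psi) = b_\pm - (b_++b_-) + 2\Psi \le b_\pm + 3\Psi.
\end{equation*}

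For (ii), use $u_\pm := h_\pm - b_\pm$ as a test function. Integration by parts on $B_r(p)$, together with $\Delta_X h_\pm = 0$ and one further integration by parts for the drift term, yields
\begin{equation*}
\int_{B_r(p)} |\nabla u_\pm|^2 dV_g = \int_{B_r(p)} u_\pm \Delta_X b_\pm\, dV_g + \tfrac12\int_{B_r(p)} u_\pm^2\,\mathrm{div}X\, dV_g.
\end{equation*}
The second term is bounded by $\mathcal{C}\Psi^2\vol(B_r(p))$ via (i). For the first, the distributional inequality $\Delta_X b_\pm \le \Psi$, combined with $|u_\pm|\le\Psi$ and a total variation estimate for the signed measure $\Delta_X b_\pm$ on $B_r(p)$ (whose singular part at the cut locus is nonpositive with mass controlled by classical Riemannian geometry arguments together with the bound on $|X|$), yields a bound of the form $\Psi\cdot C(r,n,m,\mathcal{C})$. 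Dividing by $\vol(B_r(p))\ge v$ produces the averaged bound.

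For (iii), the Bochner formula together with $\Delta_X h_\pm = 0$ produces
\begin{equation*}
\tfrac12\Delta_X|\nabla h_\pm|^2 = |\hess h_\pm|^2 + \ric_X^m(\nabla h_\pm,\nabla h_\pm) + \tfrac{1}{m}\langle X,\nabla h_\pm\rangle^2.
\end{equation*}
Using $\ric_X^m\ge -(n-1)\delta g$ and discarding the nonnegative final term gives $|\hess h_\pm|^2 \le \tfrac12\Delta_X|\nabla h_\pm|^2 + (n-1)\delta|\nabla h_\pm|^2$. Multiplying by a cutoff $\phi$ with $\phi\equiv 1$ on $B_{r/2}(p)$ and $\supp\phi\subset B_r(p)$, integrating, and transferring $\Delta_X$ onto $\phi$ via integration by parts (which picks up further $\mathrm{div}X$ terms) reduces the right-hand side to integrals of $|\nabla h_\pm|^2-|\nabla b_\pm|^2$ (using $|\nabla b_\pm|=1$ a.e.) weighted by $\nabla\phi$ and $X$; these are small on average by (ii) and the appendix's Cheng--Yau pointwise bound on $|\nabla h_\pm|$.

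The main obstacle is the non-gradient nature of $X$, which precludes any natural weighted measure making $\Delta_X$ symmetric and forces each integration by parts to leave behind $\mathrm{div}X$-error terms. The finite positive parameter $m$ provides decisive compensation: the $\tfrac{1}{m}\langle X,\nabla h_\pm\rangle^2$ term in the Bochner identity enters with the correct sign to be simply discarded after applying the curvature lower bound, a convenience absent in the $m=\infty$ setting of \cite{ZhangZhu}. A secondary technical point for (ii) is the distributional interpretation of $\int u_\pm \Delta_X b_\pm\, dV_g$, handled via decomposition into the absolutely continuous and singular parts of $\Delta_X b_\pm$ at the cut locus.
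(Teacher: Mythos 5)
Your treatments of parts (ii) and (iii) follow essentially the same route as the paper: for (ii) the identity $\int|\nabla u_\pm|^2=-\int u_\pm\Delta u_\pm$ plus the splitting of $\Delta_X b_\pm$ into a boundary-flux/absolutely-continuous part and a nonpositive singular part at the cut locus (your extra integration by parts on the drift term, giving $\tfrac12\int u_\pm^2\,\mathrm{div}X$, replaces the paper's Young's-inequality absorption and is fine); for (iii) the same Bochner identity, the same discarding of the favorable $\tfrac1m\langle X,\nabla h_\pm\rangle^2$ term, and the same $|\nabla h_\pm|^2-|\nabla b_\pm|^2$ subtraction trick. Two secondary remarks there: the Cheng--Yau lemma of the appendix bounds $|\nabla\log u|$ for \emph{positive} $u$, so applying it to $h_\pm$ requires a shift (or one can simply use the $L^2$ bound on $\nabla h_\pm$ from (ii) together with Cauchy--Schwarz, as the paper implicitly does); and the real place where Cheng--Yau enters is the construction of a cutoff $\phi$ with $|\Delta_X\phi|+|\nabla\phi|$ bounded, a nontrivial point in this setting that your sketch passes over.

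There is, however, a genuine directional error in your part (i). From $\Delta_X b_\pm\le\Psi$ and $\Delta_X h_\pm=0$ one gets $\Delta_X(b_\pm-h_\pm)\le\Psi$ with zero boundary data; combined with the barrier $\Psi G_{2r+1}$ (for which $\Delta_X G_{2r+1}\ge1$ by Corollary \ref{corollary2.4}), the \emph{minimum} principle for the supersolution $b_\pm-h_\pm-\Psi G_{2r+1}$ yields the \emph{lower} bound $b_\pm-h_\pm\ge-c\Psi$, i.e.\ $h_\pm\le b_\pm+c\Psi$. It does not yield $b_\pm-h_\pm\le\Psi$, which is what you claim as your first step. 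Lemma \ref{proposition3.2} cannot supply that direction either: it requires an interior point $x_0$ with $U(x_0)\le0$ at definite distance from the center, and it bounds $U$ only at that center; for $U=b_\pm-h_\pm$ no such $x_0$ is available (the existence of a point where the excess is small is what drives the Abresch--Gromoll argument, but there is no analogous a priori sign information for $b_\pm-h_\pm$). Since your ``reverse inequality'' is derived \emph{from} this unjustified first step, the whole of (i) as written is circularly supported. The repair is simply to transpose the two steps, as in the paper: first obtain $h_\pm\le b_\pm+c_1\Psi$ from the barrier argument, then combine $|h_++h_-|\le c_3\Psi$ (your maximum-principle observation, which is correct) with $|b_++b_-|\le c_2\Psi$ to conclude
\begin{equation*}
b_\pm-h_\pm\le -b_\mp-h_\pm+c_2\Psi\le-(h_\mp+h_\pm)+(c_1+c_2)\Psi\le(c_1+c_2+c_3)\Psi .
\end{equation*}
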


\begin{remark}
The lower bound $v$ is included on the right-hand side of the estimates $(ii)$ and $(iii)$, even though the volume of $B_r(p)$ is fixed by $r$ for a given manifold $M$, in order to indicate the dependence on quantities relevant to controlling sequences of manifolds.
\end{remark}

\begin{proof}[Proof of (i)]
Observe that according to Proposition \ref{lemma2.1}
\begin{equation}\label{kjh}
\Delta_{X}b_{\pm}\leq \bar{H}_{n+m}(\rho_{\pm})\leq\Psi\left ( \epsilon,\delta, L^{-1}\big \vert n,m,r\right).
\end{equation}
Fix a point $y\in\partial B_{r+1}(p)$, let $\rho(x)=d(x,y)$, and consider the function $G_{2r+1}(\rho(x))$ constructed in \eqref{eq2.17}. By Corollary \ref{corollary2.4} we have $\Delta_{X}G_{2r+1}\geq 1$, so that
\begin{equation}
\Delta_{X}\left(b_{\pm}-h_{\pm}-\Psi G_{2r+1}\right)\leq 0\quad\text{ on }\quad B_{r}(p),
\end{equation}
with
\begin{equation}
b_{\pm}-h_{\pm}-\Psi G_{2r+1}=-\Psi G_{2r+1}\geq -c_1 \Psi\quad\text{ on }\quad \partial B_{r}(p),
\end{equation}
where $c_1>0$ is a constant independent of $\epsilon$, $\delta$, and $L^{-1}$.
The maximum principle then implies that
\begin{equation}
b_{\pm}-h_{\pm}\geq\Psi G_{2r+1}-c_1 \Psi\geq -c_1 \Psi\quad\text{ on }\quad B_{r}(p).
\end{equation}
Next, use the excess estimate Theorem \ref{theorem3.1} to find
$|b_+ +b_-|=|E-E(p)|\leq c_2\Psi$. Since $h_+ +h_-$ is $X$-harmonic and agrees with
$b_+ +b_-$ on $\partial B_{r}(p)$, it follows that $|h_+ + h_-|\leq c_3\Psi$ on $B_{r}(p)$. Finally, on the ball of radius $r$ centered at $p$ it holds that
\begin{equation}
b_{\pm}- h_{\pm}\leq-b_{\mp} -h_{\pm}+c_2 \Psi\leq-(h_{\mp}+h_{\pm})+(c_1+c_2)\Psi
\leq(c_1 +c_2+c_3)\Psi.
\end{equation}
\end{proof}

\begin{proof}[Proof of (ii)]
Observe that since $b_{\pm}$ are Lipschitz functions we have $\Delta b_{\pm}\in H^{-1}(B_{r}(p))$, the dual of the Sobolev space $H^1(B_{r}(p))$. Therefore $\Delta b_{\pm}$ may be paired with an $H^{1}(B_{r}(p))$ function and integrated by parts. It then follows from $(i)$ that
\begin{align}
\label{eq4.4}
\begin{split}
&\int_{B_r(p)}\left \vert \nabla h_{\pm} -\nabla b_{\pm} \right \vert^2 dV_g\\
=& -\int_{B_r(p)}\left ( h_{\pm} - b_{\pm} \right ) \left ( \Delta h_{\pm} - \Delta b_{\pm} \right ) dV_g\\
=& -\int_{B_r(p)}\left ( h_{\pm} - b_{\pm} \right ) \left [ \Delta_X h_{\pm} - \Delta_X b_{\pm} +\nabla_X \left ( h_{\pm}-b_{\pm}\right ) \right ] dV_g\\
\le & \Psi\left ( \epsilon,\delta, L^{-1}\big \vert n,m,r\right) \left [ \int_{B_r(p)}\left \vert \Delta_X b_{\pm} \right \vert dV_g +\mathcal{C} \int_{B_r(p)}\left \vert \nabla h_{\pm} -\nabla b_{\pm} \right \vert dV_g \right ].
\end{split}
\end{align}
Note that $b_{\pm}$ is smooth away from the cut locus of $q_{\pm}$, which is a set of measure zero, and therefore the absolute value $|\Delta_X b_{\pm}|$ is well-defined in the context above. Next, consider the elementary inequality $a-b\leq|a+b|+2a$ for any numbers $a$ and $b$. Using this inequality with $a$ and $b$ representing the integral of the positive and negative parts\footnote{Here the convention for positive and negative parts of a function $f$ is such that $f=f_+ + f_-$ and $|f|=f_+ - f_-$.} of $\Delta_X b_{\pm}$, together with \eqref{kjh} and $|\nabla b_{\pm}|=1$,
yields
\begin{align}
\label{eq4.5}
\begin{split}
\int_{B_r(p)}\left \vert \Delta_X b_{\pm} \right \vert dV_g
\le & \left \vert\int_{B_r(p)}\Delta_X b_{\pm}  dV_g \right \vert +2\left(\sup_{B_r(p)}\Delta_X b_{\pm}\right ) \mathrm{Vol}\left(B_r(p)\right)\\
\le &\mathrm{Vol}\left(\partial B_r(p)\right) +\mathcal{C} \mathrm{Vol}\left( B_r(p)\right)+2\Psi \mathrm{Vol}\left(B_r(p)\right).
\end{split}
\end{align}
Moreover by Young's inequality
\begin{equation}
\label{eq4.6}
\int_{B_r(p)}\left \vert \nabla h_{\pm} -\nabla b_{\pm} \right \vert dV_g\le  \frac{1}{2}\int_{B_r(p)}\left \vert \nabla h_{\pm} -\nabla b_{\pm} \right \vert^2 dV_g +\frac{1}{2}\mathrm{Vol}\left( B_r(p)\right) .
\end{equation}
If $\Psi$ is sufficiently small while $\mathcal{C}$ is held fixed, we then have
\begin{equation}
\label{eq4.7}
\mathop{\mathlarger{\fint}_{B_r(p)}}\left \vert \nabla h_{\pm} -\nabla b_{\pm} \right \vert^2 dV_g
\le 2\Psi\left ( \epsilon,\delta, L^{-1}\big \vert n,m,r,\mathcal{C}\right)\left ( \frac{\mathrm{Vol}\left( \partial B_r(p)\right)}{\mathrm{Vol}\left( B_r(p)\right)} +2\mathcal{C}+2\right ) .
\end{equation}
Lastly, observe that Corollary \ref{corollary2.5} produces $\mathrm{Vol}(\partial B_{r}(p))\leq e^{\mathcal{C}r}\overline{\mathrm{Vol}}_{n+m}(\partial B_r)$,
and by assumption $\mathrm{Vol}(B_r(p))\geq v$.
%and Corollary \ref{corollary2.6} shows that there exists a constant $c>0$, %depending on $r$, $\mathcal{C}$, and $\mathcal{V}$, such that %$\mathrm{Vol}(B_r(p))\geq c$.
%We then have that
%\begin{equation}
%\frac{\mathrm{Vol}\left( \partial B_r(p)\right)}{\mathrm{Vol}\left( B_r(p)\right)}
%\leq c^{-1}e^{\mathcal{C}r}\overline{\mathrm{Vol}}_{n+m}(\partial B_r),
%\end{equation}
%The desired result now follows.
\end{proof}

\begin{proof}[Proof of (iii)] From \cite[Lemma 4]{KWW} we have, for functions $u\in C^{\infty}(M)$, the Bochner formula
\begin{equation}
\label{eq4.8}
\Delta_X \left ( \left \vert \nabla u \right \vert^2 \right ) = 2\left \vert \hess u \right \vert ^2 +2\nabla_{\nabla u} \left ( \Delta_X u\right ) +2\mathrm{Ric}_X^m (\nabla u ,\nabla u) +\frac{2}{m}\left ( X(u)\right )^2\ .
\end{equation}
Setting $u=h_{\pm}$ and recalling that these functions are $X$-harmonic, as well as the fact that $\mathrm{Ric}_X^m\ge -(n-1)\delta g$, gives rise to
\begin{align}
\label{eq4.9}
\begin{split}
\left \vert \hess h_{\pm} \right \vert ^2 \le&  \frac12 \Delta_X \left ( \left \vert \nabla h_{\pm} \right \vert^2 \right ) +(n-1)\delta\left \vert \nabla h_{\pm}\right \vert^2 -\frac{1}{m}\left ( \nabla_X h_{\pm}\right )^2 \\
=& \frac12 \Delta_X \left ( \left \vert \nabla h_{\pm} \right \vert^2 -
1\right ) +(n-1)\delta\left \vert \nabla h_{\pm}\right \vert^2 -\frac{1}{m}\left ( \nabla_X h_{\pm}\right )^2.
\end{split}
\end{align}
Now introduce a nonnegative cut-off function $\phi \in C^{\infty}_c(B_r(p))$ with $\phi\equiv 1$ on $B_{r/2}(p)$, and $|\Delta_X \phi |+|\nabla \phi |\le c( n,m,r,\mathcal{C})$. The construction of this cut-off function will be addressed below. Then multiplying \eqref{eq4.9} by $\phi$ and integrating by parts yields
\begin{align}
\label{eq4.11}
\begin{split}
\int_{B_{r/2}(p)} \left \vert \hess h_{\pm} \right\vert^2 dV_g\leq&
\int_{B_{r}(p)} \phi \left \vert \hess h_{\pm} \right\vert^2 dV_g\\
\leq &\frac{1}{2}\int_{B_r(p)} \left (\Delta_{X} \phi +2\nabla_X \phi +\phi \mathrm{div} X\right ) \left ( \left \vert \nabla h_{\pm} \right \vert^2-\left \vert \nabla b_{\pm} \right \vert^2 \right ) dV_g\\
& + \int_{B_r(p)}\phi\left[(n-1)\delta\left \vert \nabla h_{\pm} \right \vert^2 -\frac{1}{m}\left ( \nabla_X h_{\pm} \right )^2\right] dV_g,
\end{split}
\end{align}
where we have used that $|\nabla b_{\pm}|^2=1$ a.e.
The last term on the right-hand side has an advantageous sign, while the others may be estimated by part $(ii)$. Together with the volume comparison of Corollary \ref{corollary2.6}, which implies that
\begin{equation}
\frac{\mathrm{Vol}(B_r(p))}{\mathrm{Vol}(B_{r/2}(p))}\leq
\frac{\overline{\mathrm{Vol}}_{n+m}^{\mathcal{C}}(B_r)}
{\overline{\mathrm{Vol}}_{n+m}^{\mathcal{C}}(B_{r/2})},
\end{equation}
the desired result is achieved.

Finally, we consider the existence of a cut-off function $\phi$ with the necessary properties. This is shown in \cite[Theorem 6.33]{CC} in the setting of Ricci lower bounds. That proof goes through here, \emph{mutatis mutandis}, modulo the use of the Cheng-Yau gradient estimate \cite{ChengYau}. Specifically, the proof proceeds by constructing exact solutions of ordinary differential equations on the comparison space of dimension $n$, which for our case becomes the comparison space of dimension $n+m$. Laplacian comparison then yields differential inequalities which, for us, hold for the drift Laplacian $\Delta_X$ on an $n$-manifold, as in Corollaries \ref{corollary2.4} and \ref{corollary2.5}. One obtains \emph{en lieu} of \cite[Equation 6.59]{CC} the differential equation $\Delta_X \phi = \psi'' |\nabla k|^2 +\psi'\delta$ for $\phi$, with $k$ and $\psi$ as defined in \cite{CC}. As per that reference, the construction is then complete, and the desired properties then follow from the Cheng-Yau estimate. This gradient estimate, which is applied to $\Delta_{X}k=\delta$ in our setting,
requires modification that is provided in the Appendix.
\end{proof}

\section{The Segment Inequality}
\label{sec5} \setcounter{equation}{0}
\setcounter{section}{5}

The Cheeger-Gromoll splitting theorem is established by first finding pointwise estimates for the Laplacian of Busemann functions and then for their Hessians.
In the context of the Cheeger-Colding almost splitting theorem, one is only able to
find estimates for volume integrals of these quantities over certain regions.
These in turn may be brought closer to the pointwise realm, by showing that they imply integral estimates for these quantities along geodesics.
The primary tool used to achieve this goal is the \textit{segment inequality}. Generalizations of the original inequality from \cite{CC} have been obtained in the gradient Bakry-\'{E}mery setting in \cite{Jaramillo,WZ}, and a version for the non-gradient case appears in \cite{ZhangZhu}. Here, the short proof is included for completeness in the general case following the renditions of \cite{Gallot,Richard}.

\begin{lemma}\label{proposition5.1}
Let $m,r>0$ and $\delta\geq 0$, and assume that $\mathrm{Ric}_{X}^m(g)\geq -(n-1)\delta g$ in addition to $|X|\leq\mathcal{C}$.
Let $f:B_{2r}(p)\rightarrow\mathbb{R}_{\geq 0}$, consider domains $\Omega_1,\Omega_2\subset B_{r}(p)$ with $x_1\in\Omega_1$ and $x_2\in\Omega_2$, and define
\begin{equation}
\label{eq5.1}
{\mathcal F}_{f} (x_1,x_2) = \sup_{\gamma} \int_0^{d(x_1,x_2)} f\circ\gamma(s) ds ,
\end{equation}
where the supremum is taken over minimizing unit speed geodesics $\gamma$ joining $x_1$ to $x_2$. Then there exists a constant $c$ depending on $n$, $m$, $r$, $\delta$, and $\mathcal{C}$ such that
\begin{equation}
\label{eq5.2}
\int_{\Omega_1\times \Omega_2} {\mathcal F}_{f} dV_g \wedge dV_g \le
c\left (\mathrm{Vol}(\Omega_1)
+\mathrm{Vol}(\Omega_2)\right )
\int_{B_{2r}(p)} f dV_g.
\end{equation}
%where $\overline{\mathrm{Vol}}^{\mathcal{C}}_{n+m}(\Omega_i)$ denotes the weighted %volume with respect to $x_i$, $i=1,2$ as in Corollaries \ref{corollary2.6}.
\end{lemma}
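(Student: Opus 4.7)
The plan is to adapt the classical Cheeger--Colding segment inequality proof (cf.\ \cite{CC,Gallot,Richard}), using polar coordinates around each of the two endpoints in turn. Fix $x_1,x_2 \in B_r(p)$, let $L = d(x_1,x_2) \leq 2r$, and let $\gamma$ be a minimizing unit-speed geodesic from $x_1$ to $x_2$ (unique almost everywhere on $\Omega_1 \times \Omega_2$, so the supremum defining $\mathcal{F}_f$ may be replaced by the integral along a fixed choice of $\gamma$). Split at the midpoint,
\begin{equation*}
\int_0^{L} f(\gamma(s))\,ds = \int_0^{L/2} f(\gamma(s))\,ds \;+\; \int_{L/2}^{L} f(\gamma(s))\,ds \;=:\; I_1 + I_2.
\end{equation*}
The contribution of $I_2$ will produce the $\mathrm{Vol}(\Omega_1)$ term on the right of \eqref{eq5.2}, by integrating $x_2$ in polar coordinates centred at $x_1$; the contribution of $I_1$ will produce the $\mathrm{Vol}(\Omega_2)$ term by the symmetric argument exchanging $x_1 \leftrightarrow x_2$.

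To bound the $I_2$ piece, fix $x_1$ and write $x_2 = \exp_{x_1}(L\theta)$, so $\gamma(s)=\exp_{x_1}(s\theta)$ and $dV_g(x_2)=\mathcal{A}(L,\theta)\,dL\wedge dV_{S^{n-1}}(\theta)$ away from the measure-zero cut locus. Fubini swaps the $s$- and $L$-integrations; the condition $s\in[L/2,L]$ becomes $L\in[s,2s]$, so
\begin{equation*}
\int_{\Omega_2} I_2\, dV(x_2) \;\leq\; \int_{S^{n-1}}\!\int_0^{2r} f(\exp_{x_1}(s\theta)) \left(\int_s^{2s} \mathcal{A}(L,\theta)\,dL \right) ds\, dV_{S^{n-1}}(\theta).
\end{equation*}
The crucial step is to bound the inner integral by a constant times $\mathcal{A}(s,\theta)$. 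From Proposition \ref{lemma2.1} together with $H = H_X + \nabla_X\rho$, the hypothesis $|X|\leq \mathcal{C}$ gives
\begin{equation*}
\partial_\rho\log \mathcal{A}(\rho,\theta) \;=\; H(\rho,\theta) \;\leq\; \bar{H}_{n+m}(\rho)+\mathcal{C},
\end{equation*}
and integrating from $s$ to $L\in[s,2s]\subset[0,2r]$ yields
\begin{equation*}
\mathcal{A}(L,\theta) \;\leq\; e^{\mathcal{C}(L-s)}\!\left(\frac{\ell(L)}{\ell(s)}\right)^{n+m-1}\mathcal{A}(s,\theta) \;\leq\; C(n,m,r,\delta,\mathcal{C})\,\mathcal{A}(s,\theta),
\end{equation*}
since $e^{\mathcal{C}(L-s)}\leq e^{2\mathcal{C}r}$ and $\ell(L)/\ell(s)\leq \ell(2s)/\ell(s) = 2\cosh(\sqrt{\delta}s)\leq 2\cosh(2\sqrt{\delta}r)$ on the range considered. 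Hence $\int_s^{2s}\mathcal{A}(L,\theta)\,dL\leq 2rC\,\mathcal{A}(s,\theta)$, and reassembling the iterated integral produces $\int_{\Omega_2} I_2\,dV(x_2)\leq 2rC\int_{B_{2r}(x_1)}\!f\,dV$. Integrating over $x_1\in\Omega_1$ yields the desired $\mathrm{Vol}(\Omega_1)$ factor, and the symmetric treatment of $I_1$ delivers the $\mathrm{Vol}(\Omega_2)$ factor, completing \eqref{eq5.2}.

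The principal obstacle introduced by the non-gradient setting is the extra factor $e^{\mathcal{C}(L-s)}$ in the Jacobian comparison, which is absent in the classical Ricci case and stems from $H-H_X=\nabla_X\rho$ being nonzero; the hypothesis $|X|\leq\mathcal{C}$ is exactly what is needed to absorb it into a geometric constant, since $L-s\leq 2r$ is controlled. A secondary technical point is that a minimizing geodesic between two points of $B_r(p)$ can only be guaranteed a priori to remain inside a slightly larger concentric ball; this is harmless here, since $f$ may be extended by zero outside its domain (or equivalently $B_{2r}(p)$ can be replaced by $B_{3r}(p)$ with the enlargement absorbed into $c$).
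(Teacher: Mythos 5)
Your proposal is correct and follows essentially the same route as the paper: split the line integral at the midpoint, work in polar coordinates about the fixed endpoint, and control the Jacobian ratio $\mathcal{A}(L,\theta)/\mathcal{A}(s,\theta)$ for $L\in[s,2s]$ via the weighted area comparison (your bound $\mathcal{A}(L)\le e^{\mathcal{C}(L-s)}(\ell(L)/\ell(s))^{n+m-1}\mathcal{A}(s)$ is exactly the monotonicity of $e^{-\mathcal{C}\rho}\mathcal{A}/\bar{\mathcal{A}}$ from Corollary \ref{corollary2.5}, which the paper invokes in \eqref{eq5.5}--\eqref{eq5.6}). The only differences are presentational: you perform the Fubini swap explicitly where the paper simply enlarges the domains of the iterated integral, and your worry about the geodesic leaving $B_{2r}(p)$ is unnecessary since any point of a minimal segment between two points of $B_r(p)$ is within distance $r$ of one endpoint.
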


\begin{proof}
Up to a set of measure zero, each pair of points $(x,y)\in \Omega_1 \times\Omega_2$
is joined by a unique minimal geodesic $\gamma_{xy}:[0,d(x,y)]\to B_{2r}(p)$. Thus, for the current purpose, the integral of \eqref{eq5.1} may be evaluated along such geodesics. Now write
\begin{equation}
\label{eq5.3}
{\mathcal F}_{f}^+(x,y)= \int_{d(x,y)/2}^{d(x,y)} f\circ\gamma_{xy}(s)ds,\quad\quad\quad
{\mathcal F}_{f}^-(x,y)= \int_0^{d(x,y)/2} f\circ\gamma_{xy}(s)ds,
\end{equation}
so that
\begin{equation}
{\mathcal F}_{f}(x,y)={\mathcal F}_{f}^+(x,y)+ {\mathcal F}_{f}^-(x,y).
\end{equation}
Fix $x\in \Omega_1$ and study ${\mathcal F}_{f}^+(x,\cdot)$ by writing its integrand using geodesic polar coordinates about $x$. Since the cut-locus of $x$ is a set of measure zero, we have
\begin{equation}
\label{eq5.4}
\int_{\Omega_2} {\mathcal F}_{f}^+(x,\cdot)e^{-\mathcal{C}\rho}dV_g = \int_{S^{n-1}}\int_{I_{\theta}} {\mathcal F}_{f}^+(x,\exp_x(\rho\theta)){\mathcal A}_x e^{-\mathcal{C}\rho}d\rho dV_{S^{n-1}},
\end{equation}
where $dV_g=\mathcal{A}_x d\rho\wedge dV_{S^{n-1}}$ is the volume form expressed in polar coordinates centered at $x$, and $I_{\theta}=\{ \rho \big\vert \exp_x(\rho\theta)\in \Omega_2\}$.
By the area comparison Corollary \ref{corollary2.5} we have
\begin{equation}
\label{eq5.5}
\frac{e^{-\mathcal{C}t}{\mathcal A}_x(t)}{\bar{\mathcal A}(t)}\ge \frac{e^{-\mathcal{C}\rho}{\mathcal A}_x(\rho)}{\bar{\mathcal A}(\rho)}\quad\text{ for }\quad
t\leq \rho,
\end{equation}
and therefore
\begin{align}
\label{eq5.6}
\begin{split}
{\mathcal F}_{f}^+(x,\exp_x(\rho\theta)){\mathcal A}_x(\rho)e^{-\mathcal{C}\rho} =& \left ( \int_{\rho/2}^{\rho} f(\exp_x(t\theta))dt \right ) {\mathcal A}_x(\rho)e^{-\mathcal{C}\rho}\\
\le &  c_1\int_{\rho/2}^{\rho}f(\exp_x(t\theta)) {\mathcal A}_x(t) e^{-\mathcal{C}t} dt,
\end{split}
\end{align}
where $c_1$ depends on $n$, $m$, $r$, $\delta$, and $\mathcal{C}$.
It follows that
\begin{align}
\label{eq5.7}
\begin{split}
e^{-2r\mathcal{C}}\int_{\Omega_2}{\mathcal F}_{f}^+(x,\cdot) dV_g\leq&
\int_{\Omega_2}{\mathcal F}_{f}^+(x,\cdot)e^{-\mathcal{C}\rho} dV_g \\
\le & c_1\int_{S^{n-1}} \int_{I_{\theta}} \int_{\rho/2}^\rho f(\exp_x(t\theta)){\mathcal A}_x(t) dt d\rho dV_{S^{n-1}} \\
\le & c_1\int_0^{2r}\left ( \int_{S^{n-1}}  \int_0^r f(\exp_x(t\theta)){\mathcal A}_x(t) dt dV_{S^{n-1}}\right) d\rho \\
\le & 2rc_1\int_{B_{2r}(p)}f dV_g.
\end{split}
\end{align}
Integrate once more, over $\Omega_1$, to obtain the desired estimate for $\int_{\Omega_1\times \Omega_2}{\mathcal F}_{f}^+ dV_g \wedge dV_g$. Lastly, the ${\mathcal F}_{f}^-$ contribution is dealt with by interchanging the roles of $\Omega_1$ and $\Omega_2$, and repeating the argument above.
\end{proof}

To see how this leads to `almost pointwise' bounds, choose now $\Omega_1=B_{r}(x_1)\subset B_{2r}(p)$ and $\Omega_2=B_{r}(x_2)\subset B_{2r}(p)$. If $\mathcal{F}_f$ were continuous, then the mean value theorem for integrals implies that there are points $x_1^*\in B_r(x_1)$, $x_2^*\in B_r(x_2)$ such that the left-hand side of \eqref{eq5.2} can be replaced by ${\mathcal F}_{f} (x_1^*,x_2^*)\mathrm{Vol}(B_r(x_1))\mathrm{Vol}(B_r(x_2))$, and so we obtain
\begin{equation}
\label{eq5.8}
{\mathcal F}_{f} (x_1^*,x_2^*)\le c\left(\frac{1}{\mathrm{Vol}(B_r(x_1))}
 +\frac{1}{\mathrm{Vol}(B_r(x_2))}\right) \int_{B_{4r}(p)}f dV_g.
\end{equation}
In general $\mathcal{F}_f$ may not be continuous, in which case the Markov inequality may be employed in place of the mean value theorem in order to achieve the same estimate with a different constant $c$. Recall that the Markov inequality states that for $\eta>0$ and a nonnegative measurable function $u$ on a domain $\Omega$, it holds that
\begin{equation}
\frac{1}{\eta}\int_{\Omega}u\geq |\{x\in\Omega\mid u(x)\geq\eta\}|.
\end{equation}
Note that the bound \eqref{eq5.8} becomes useless when taking Gromov-Hausdorff limits, if the volume of the ball $B_{2r}(p)$ approaches zero, that is, if \emph{collapsing} occurs. However, with the volume comparison result Corollary
\ref{corollary2.6}, collapse of a ball of fixed radius can be avoided with the assumption of a total volume lower bound $\mathrm{Vol}(M)\geq\mathcal{V}>0$ and diameter upper bound $\mathrm{diam}(M)\leq\mathcal{D}$, as in Theorem \ref{theorem1.2}. The following estimates are the main application of the segment inequality to be used in the almost splitting result.

\begin{prop}\label{lemma5.2}
Assume that the hypotheses of Theorem \ref{theorem3.1} hold, together with $|X|+|\mathrm{div}X|\leq\mathcal{C}$ and $\mathrm{Vol}(B_r(p))\geq v>0$, and let
$\Psi=\Psi\left ( \epsilon,\delta, L^{-1}\big \vert n,m,r,v,\mathcal{C}\right )$
be as in Proposition \ref{proposition4.1}.
Let $x,y,z\in B_{r/4}(p)$ be such that $x$ and $y$ lie on a level set of $h_+$, and $z$ lies on a minimizing geodesic connecting $q_+$ to $y$.
Then there exist $x_* \in B_{\varrho}(x)$, $y_*\in B_{\varrho}(y)$, and $z_*\in B_{\varrho}(z)$ satisfying the following properties, where $\varrho=\Psi^{3\varsigma}$ with $\varsigma=\frac{1}{45(n+m)}$. There is a minimal geodesic $\sigma(s)$ from $z_*$ to $y_*$, such that for almost every $s\in[0,d(y_*,z_*)]$ a unique minimal geodesic $\tau_s$ connects $x_*$ to $\sigma(s)$, and
\begin{itemize}
\item[(i)] $\int_0^{d(y_*,z_*)}\left \vert \nabla h_+(\sigma(s)) -\sigma'(s) \right \vert ds\le \Psi^{\varsigma}$,
\item[(ii)] $\int_0^{d(y_*,z_*)} \int_0^{d(x_*,\sigma(s))} \left \vert \hess h_+(\tau_s(t)) \right \vert dt ds\le \Psi^{\varsigma}$.
\end{itemize}
A similar statement holds for $h_-$.
\end{prop}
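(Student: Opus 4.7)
The plan is to apply the segment inequality (Lemma \ref{proposition5.1}) to the quantities whose averages over $B_r(p)$ are controlled by Proposition \ref{proposition4.1}---namely $|\nabla h_+ - \nabla b_+|$ for part (i) and $|\hess h_+|$ for part (ii)---and then to use Markov's inequality to extract a triple $(x_*,y_*,z_*) \in B_\varrho(x) \times B_\varrho(y) \times B_\varrho(z)$ of nearly full measure on which both conclusions hold. The volume comparison in Corollary \ref{corollary2.6} supplies the ratio $\mathrm{Vol}(B_r(p))/\mathrm{Vol}(B_\varrho) \lesssim \varrho^{-(n+m)}$, which governs the passage from averages over balls to integrals along geodesics; balancing this inverse power against the square roots of $\Psi$ produced by Cauchy-Schwarz is what dictates the specific choice $\varsigma = 1/(45(n+m))$ and $\varrho = \Psi^{3\varsigma}$.

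For part (i), I would use the triangle inequality $|\nabla h_+ - \sigma'| \le |\nabla h_+ - \nabla b_+| + |\nabla b_+ - \sigma'|$. The first summand integrated along $\sigma$ is controlled by Lemma \ref{proposition5.1} applied with $\Omega_1 = B_\varrho(z)$, $\Omega_2 = B_\varrho(y)$ and $f = |\nabla h_+ - \nabla b_+|$, combined with Proposition \ref{proposition4.1}(ii) and Cauchy-Schwarz (to pass from $L^2$ to $L^1$). For the second summand, the hypothesis that $z$ lies on a minimal geodesic from $q_+$ to $y$ forces $b_+(y) - b_+(z) = d(z,y)$, and the triangle inequality then yields $b_+(y_*) - b_+(z_*) \ge d(z_*, y_*) - c\varrho$ for any $y_* \in B_\varrho(y)$ and $z_* \in B_\varrho(z)$. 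Integrating $(b_+ \circ \sigma)' = \langle \nabla b_+, \sigma'\rangle$ along $\sigma$ and using $|\nabla b_+| = 1$ a.e.\ gives $\int_0^{d(y_*,z_*)}(1 - \langle \nabla b_+, \sigma'\rangle)\, ds \le c\varrho$; the pointwise identity $|\nabla b_+ - \sigma'|^2 = 2(1 - \langle \nabla b_+, \sigma'\rangle)$ (valid a.e.) converts this into an $L^2$-bound of order $\varrho = \Psi^{3\varsigma}$, and Cauchy-Schwarz then yields an $L^1$-bound of order $\Psi^{3\varsigma/2}$, which is safely smaller than $\Psi^{\varsigma}$.

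For part (ii), I would iterate the segment inequality. Setting $\tilde{f}_{x_*}(w) := \mathcal{F}_{|\hess h_+|}(x_*, w)$, the computation in the proof of Lemma \ref{proposition5.1} (cf.\ \eqref{eq5.7}) already yields $\int_{B_{2r}(p)} \tilde{f}_{x_*}\, dV_g \le c \int_{B_{4r}(p)} |\hess h_+|\, dV_g$, and Cauchy-Schwarz together with Proposition \ref{proposition4.1}(iii) bounds this by $c\Psi^{1/2}$ times a volume factor. Applying Lemma \ref{proposition5.1} a second time to $\tilde{f}_{x_*}$ with $\Omega_1 = B_\varrho(y)$, $\Omega_2 = B_\varrho(z)$ produces an average-over-$B_\varrho(y) \times B_\varrho(z)$ bound for $\mathcal{F}_{\tilde{f}_{x_*}}(y_*, z_*)$, which coincides precisely with the double integral appearing in (ii). A further average over $x_* \in B_\varrho(x)$ and Markov's inequality then isolate a subset of nearly full measure on which (ii) holds with bound $\Psi^\varsigma$. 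Intersecting the good sets from (i) and (ii)---each of relative measure tending to $1$ as $\Psi \to 0$---gives the desired common $(x_*, y_*, z_*)$.

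The principal obstacle is the exponent bookkeeping: $\varsigma$ must be small enough that Markov's inequality absorbs both the factor $\Psi^{-\alpha}$ it introduces and the volume-comparison inverse power $\varrho^{-(n+m)}$, while still leaving a positive power of $\Psi$ on the right. A secondary subtlety is that Lemma \ref{proposition5.1} controls integrals along \emph{some} minimizing geodesic, which outside a measure-zero set coincides with the unique minimizer joining the two chosen endpoints; this identification is implicit when interpreting $\mathcal{F}_f$ as integration along the specific geodesics $\sigma$ and $\tau_s$.
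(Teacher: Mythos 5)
Your proposal follows essentially the same route as the paper: a double application of the segment inequality (first in $x_*$ against a large ball, then in $(y_*,z_*)$ against $B_\varrho(y)\times B_\varrho(z)$ with $f$ taken to be $\mathcal{F}_{|\hess h_+|}(x_*,\cdot)$ and $|\nabla h_+-\nabla b_+|$), Markov's inequality combined with the volume comparison $|B_\varrho|\gtrsim \varrho^{n+m}|B_r(p)|$ to absorb the three $\varrho^{-(n+m)}$ factors, and for part (i) the identical decomposition via $|\nabla b_+-\sigma'|^2=2(1-\langle\nabla b_+,\sigma'\rangle)$ together with the hypothesis that $z$ lies on a minimizing geodesic from $q_+$ to $y$. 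The only cosmetic difference is that the paper selects $(y_*,z_*)$ by applying Markov to the sum of the two functionals rather than intersecting two good sets, which is equivalent.
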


\begin{proof}
We will use the following notation for volumes $|\Omega|=\mathrm{Vol}(\Omega)$. Let $\varrho$ be small enough so that $B_{\varrho}(x)\subset B_{r/2}(p)$. Observe then that the segment inequality Lemma \ref{proposition5.1} with $f=|\hess h_+|$, together with the Markov inequality, yields the existence of $x_*\in B_{\varrho}(x)$ such that
\begin{align}\label{al}
\begin{split}
|B_{\varrho}(x)|\int_{B_{r/2}(p)}\mathcal{F}_{|\hess h_+|}(x_*,\cdot)dV_g\leq&
c_1\int_{B_{\varrho}(x)\times B_{r/2}(p)}\mathcal{F}_{|\hess h_+|} dV_g\wedge dV_g\\
\leq &c_2\left(|B_{\varrho}(x)|+|B_{r/2}(p)|\right)\int_{B_r(p)}|\hess h_+| dV_g.
\end{split}
\end{align}
On the other hand, if $B_{\varrho}(y),B_{\varrho}(z)\subset B_{r/4}(p)$ then the segment inequality with $f=\mathcal{F}_{|\hess h_+|(x_*,\cdot)}$ and separately $f=|\nabla h_+ -\nabla b_+|$, again combined with Markov's inequality, provides $y_*\in B_{\varrho}(y)$ and $z_*\in B_{\varrho}(z)$ so that
\begin{align}\label{alp}
\begin{split}
&|B_{\varrho}(y)||B_{\varrho}(z)|\left(\mathcal{F}_{|\nabla h_+ -\nabla b_+|}(y_*,z_*)+\mathcal{F}_{\mathcal{F}_{|\hess h_+|}(x_*,\cdot)}(y_*, z_*)\right)\\
\leq&
c_3\int_{B_{\varrho}(y)\times B_{\varrho}(z)}\left(\mathcal{F}_{|\nabla h_+ -\nabla b_+|}+\mathcal{F}_{\mathcal{F}_{|\hess h_+|}(x_*,\cdot)}\right) dV_g\wedge dV_g\\
\leq &c_4\left(|B_{\varrho}(y)|+|B_{\varrho}(z)|\right)\left(\int_{B_{r/2}(p)}|\nabla h_+ -\nabla b_+| dV_g+
\int_{B_{r/2}(p)}\mathcal{F}_{|\hess h_+|(x_*,\cdot)} dV_g\right).
\end{split}
\end{align}
Inequalities \eqref{al} and \eqref{alp} then give
\begin{align}
\begin{split}
&\mathcal{F}_{|\nabla h_+ -\nabla b_+|}(y_*,z_*)+\mathcal{F}_{\mathcal{F}_{|\hess h_+|}(x_*,\cdot)}(y_*, z_*)\\
\leq &c_4\left(\frac{1}{|B_{\varrho}(y)|}+\frac{1}{|B_{\varrho}(z)|}\right)
\int_{B_r(p)}|\nabla h_+ -\nabla b_{+}|dV_g
\\
&+\frac{c_2 c_4 \left(|B_{\varrho}(y)|+|B_{\varrho}(z)|\right)}{|B_{\varrho}(y)||B_{\varrho}(z)|}
\frac{\left(|B_{\varrho}(x)|+|B_{r/2}(p)|\right)}{|B_{\varrho}(x)|}
\int_{B_r(p)}|\hess h_+| dV_g\\
\leq & \frac{c_5 |B_{r}(p)|^3}{|B_{\varrho}(x)||B_{\varrho}(y)||B_{\varrho}(z)|}
\Psi^{1/2},
\end{split}
\end{align}
where in the last line Proposition \ref{proposition4.1} was used along with H\"{o}lder's inequality. The volume comparison result, Corollary \ref{corollary2.6}, implies that
\begin{equation}
|B_{\varrho}(x)|\geq\frac{\overline{\mathrm{Vol}}_{n+m}^{\mathcal{C}}(B_{\varrho})}
{\overline{\mathrm{Vol}}_{n+m}^{\mathcal{C}}(B_{2r})}|B_{2r}(x)|
\geq c_6 \varrho^{n+m}|B_{r}(p)|,
\end{equation}
for some constant $c_6>0$ depending only on $n$, $m$, $r$, and $\mathcal{C}$. The same estimate holds for $|B_{\varrho}(y)|$ and $|B_{\varrho}(z)|$. Using $\varrho=\Psi^{\frac{1}{15(n+m)}}$, it then follows that
\begin{equation}\label{167}
\mathcal{F}_{|\nabla h_+ -\nabla b_+|}(y_*,z_*)+\mathcal{F}_{\mathcal{F}_{|\hess h_+|}(x_*,\cdot)}(y_*, z_*)
\leq c_7 \varrho^{-3(n+m)}\Psi^{1/2}\leq \Psi^{1/4}
\end{equation}
for $\Psi$ sufficiently small. Note that this inequality for the second term on the left-hand side, implies statement $(ii)$.

In order to obtain statement $(i)$, observe that \eqref{167} yields
\begin{equation}
\int_{0}^{d(y_*,z_*)}|\nabla h_+ -\nabla b_+|(\sigma(s))ds\leq \Psi^{1/4}.
\end{equation}
Furthermore
\begin{equation}
|\nabla b_+(\sigma(s))-\sigma'(s)|^2 =2-2\langle \sigma'(s),\nabla b_+(\sigma(s))\rangle=2\left(1-\partial_s b_{+}(\sigma(s))\right),
\end{equation}
so that
\begin{align}
\begin{split}
\frac{1}{2}\int_{0}^{d(y_*,z_*)}|\nabla b_+(\sigma(s))-\sigma'(s)|^2 ds
=& d(y_*,z_*)+b_{+}(z_*)-b_{+}(y_*)\\
\leq& d(y,z)+d(z,q_+)-d(y,q_+)+c_8\varrho\\
=&c_8 \Psi^{3\varsigma}
\end{split}
\end{align}
where in the last step, the hypothesis that $z$ lies on a minimizing geodesic connecting $q_+$ to $y$, was used. Applying H\"{o}lder's inequality once more then produces
\begin{align}
\begin{split}
\int_{0}^{d(y_*,z_*)}|\nabla h_+(\sigma(s)) -\sigma'(s)|ds\leq&
\int_{0}^{d(y_*,z_*)}|\nabla h_+ -\nabla b_+|(\sigma(s))ds\\
&+\int_{0}^{d(y_*,z_*)}|\nabla b_+(\sigma(s)) -\sigma'(s)|ds\\
\leq & c_9 \Psi^{3\varsigma/2},
\end{split}
\end{align}
which yields the desired result for $\Psi$ sufficiently small.
\end{proof}

\section{The Almost Splitting of a Metric Ball}
\label{sec6} \setcounter{equation}{0}
\setcounter{section}{6}

We have now collected all the estimates that enter into the proof of the quantitative Pythagorean theorem. This result, Lemma \ref{lemma6.1} below, is the last step needed before proceeding to the almost splitting theorem. Quasi-right geodesic triangles are constructed based on level sets of $h_+$, and are shown to almost satisfy the Pythagorean relation. The statement provided here is slightly different than those of \cite[Proposition 2.8]{Jaramillo} and \cite[Lemma 3.2]{WZ}, and for this reason we include the proof which follows along similar lines to those of \cite[Lemma 9.16]{Cheeger}.

\begin{lemma}\label{lemma6.1}
Assume that the hypotheses of Proposition \ref{lemma5.2} hold.
Let $x,y,z\in B_{r/4}(p)$ be such that $x$ and $y$ lie on a level set of $h_+$, and $z$ lies on a minimizing geodesic connecting $q_+$ to $y$, then for $\Psi$ sufficiently small
\begin{equation}
d(x,y)^2 +d(y,z)^2 -d(x,z)^2 \leq \Psi^{\varsigma/2}.
\end{equation}
\end{lemma}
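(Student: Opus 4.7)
The plan is to adapt the Cheeger--Colding argument for the quantitative Pythagorean theorem (cf.\ \cite[Lemma 9.16]{Cheeger}) to the $X$-harmonic replacement $h_+$, using the integral bounds from Proposition~\ref{lemma5.2} in place of pointwise identities.

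First I would apply Proposition~\ref{lemma5.2} to produce perturbed points $x_*, y_*, z_*$ lying within $\varrho = \Psi^{3\varsigma}$ of $x, y, z$, together with a minimizing unit-speed geodesic $\sigma \colon [0,L] \to M$ from $z_*$ to $y_*$ (with $L = d(y_*, z_*) \le r/2$) and, for a.e.\ $s$, a minimizing unit-speed geodesic $\tau_s$ from $x_*$ to $\sigma(s)$ of length $\ell(s) := d(x_*, \sigma(s))$. Since all relevant distances are bounded by $r$ and $\varrho$ is tiny, the triangle inequality gives
\[
d(x,y)^2 + d(y,z)^2 - d(x,z)^2 \le \ell(L)^2 + L^2 - \ell(0)^2 + O(\varrho),
\]
so the problem reduces to showing $\ell(L)^2 + L^2 - \ell(0)^2 \le O(\Psi^\varsigma)$.

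The heart of the argument uses the first-variation identity $\ell'(s) = \langle \sigma'(s), \tau_s'(\ell(s))\rangle$ and the formula $\ell(L)^2 - \ell(0)^2 = 2\int_0^L \ell(s)\, \ell'(s)\, ds$. I would replace $\sigma'(s)$ by $\nabla h_+(\sigma(s))$ inside this integral using Proposition~\ref{lemma5.2}(i), paying a total error of $O(\Psi^\varsigma)$. Because $\tau_s$ is a geodesic, $\frac{d}{dt}\langle \nabla h_+(\tau_s(t)), \tau_s'(t)\rangle = \hess h_+(\tau_s'(t), \tau_s'(t))$; integrating this twice in $t$ and applying Fubini converts
\[
\ell(s)\, \langle \nabla h_+(\sigma(s)), \tau_s'(\ell(s))\rangle = h_+(\sigma(s)) - h_+(x_*) + \int_0^{\ell(s)} u\, \hess h_+(\tau_s'(u), \tau_s'(u))\, du,
\]
and after integrating in $s$, the double Hessian term is bounded by Proposition~\ref{lemma5.2}(ii).

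It then remains to evaluate $\int_0^L [h_+(\sigma(s)) - h_+(x_*)]\, ds$ up to $O(\Psi^\varsigma)$. Applying Proposition~\ref{lemma5.2}(i) along $\sigma$ with $|\sigma'|=1$ shows that $h_+$ grows at rate approximately $1$ along $\sigma$, giving $h_+(\sigma(s)) = h_+(z_*) + s + O(\Psi^\varsigma)$ and in particular $h_+(y_*) - h_+(z_*) = L + O(\Psi^\varsigma)$. The hypothesis that $x$ and $y$ lie on a common level set of $h_+$, together with the Lipschitz bound on $h_+$ (supplied by the Cheng--Yau gradient estimate of the Appendix applied to the bounded $X$-harmonic function $h_+$), yields $h_+(x_*) = h_+(y_*) + O(\varrho)$. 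Combining these identities produces
\[
\int_0^L [h_+(\sigma(s)) - h_+(x_*)]\, ds = -\tfrac12 L^2 + O(\Psi^\varsigma),
\]
and hence $\ell(L)^2 + L^2 - \ell(0)^2 = O(\Psi^\varsigma)$, which is dominated by $\Psi^{\varsigma/2}$ once $\Psi$ is small. The principal obstacle — and the reason for the specific choice $\varrho = \Psi^{3\varsigma}$ in Proposition~\ref{lemma5.2} — is the bookkeeping required to ensure that every perturbation error coming from passing to the starred points (both in distances and in values of $h_+$) is of strictly higher order than $\Psi^\varsigma$, so that it does not spoil the leading estimates coming from (i)--(ii).
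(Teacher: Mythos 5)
Your proposal is correct and follows essentially the same route as the paper's proof: the same starred points and geodesics from Proposition~\ref{lemma5.2}, the same first-variation identity $l'(s)=\langle\sigma'(s),\tau_s'(l(s))\rangle$, the same substitution of $\nabla h_+$ for $\sigma'$ via part (i), the same Hessian integration along $\tau_s$ controlled by part (ii), and the same evaluation of $\int_0^L[h_+(\sigma(s))-h_+(x_*)]\,ds=-\tfrac12 L^2+O(\Psi^{\varsigma})$. The only (harmless) divergence is that you obtain $|h_+(x_*)-h_+(y_*)|=O(\varrho)$ from a Cheng--Yau gradient bound on the $X$-harmonic function $h_+$, whereas the paper gets it more directly from Proposition~\ref{proposition4.1}(i) together with the $1$-Lipschitz property of $b_+$; both yield an error of order $\Psi^{3\varsigma}$, which is higher order than $\Psi^{\varsigma}$.
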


\begin{proof}
Let $x_* \in B_{\varrho}(x)$, $y_*\in B_{\varrho}(y)$, and $z_*\in B_{\varrho}(z)$ be as in Proposition \ref{lemma5.2}, and let $\sigma(s)$ be a minimal geodesic from $z_*$ to $y_*$, such that for almost every $s\in[0,T=d(y_*,z_*)]$ a unique minimal geodesic $\tau_s$ connects $x_*$ to $\sigma(s)$. Let $l(s)$ denote the length of $\tau_s$. Note that $l'(s)$ exists for almost all s, and by the first variation of arclength $l'(s) = \left \langle\sigma'(s), \tau_s'(l(s))\right \rangle$.  We then find that
\begin{align}\label{eq6.1}
\begin{split}
\frac{1}{2}\left(d(x_*,y_*)^2-d(x_*,z_*)^2\right) =&\frac{1}{2}\left(l(T)^2-l(0)^2\right)\\
=& \int_0^T l(s)l'(s) ds\\
=&\int_0^T l(s) \left \langle\sigma'(s), \tau_s'(l(s))\right \rangle ds\\
\leq& \int_0^T l(s) \left \langle \nabla h_+(\sigma(s)), \tau_s'(l(s)) \right \rangle ds +r \Psi^{\varsigma}\\
=&\int_0^T \int_{0}^{l(s)} \left \langle \nabla h_+(\tau_s(l(s))), \tau_s'(l(s)) \right \rangle dt ds +r \Psi^{\varsigma},
\end{split}
\end{align}
where in the second to last line statement $(i)$ of Proposition \ref{lemma5.2} was used. Next observe that
\begin{equation}
\left \langle \nabla h_+(\tau_s(l(s))), \tau_s'(l(s)) \right \rangle
=\left \langle \nabla h_+(\tau_s(t)), \tau_s'(t) \right \rangle
+\int_t^{l(s)}\hess h_+(\tau_{s}'(\bar{t}),\tau_{s}'(\bar{t}))d\bar{t},
\end{equation}
and by Proposition \ref{lemma5.2} $(ii)$
\begin{equation}
\int_{0}^{T}\int_t^{l(s)}\left|\hess h_+(\tau_{s}'(\bar{t}),\tau_{s}'(\bar{t}))\right|d\bar{t}ds\leq\Psi^{\varsigma}.
\end{equation}
It follows that
\begin{align}\label{jhui}
\begin{split}
\frac{1}{2}\left(d(x_*,y_*)^2-d(x_*,z_*)^2\right) \leq&
\int_0^T \int_{0}^{l(s)} \left \langle \nabla h_+(\tau_s(t)), \tau_s'(t) \right \rangle dt ds +2r \Psi^{\varsigma}\\
=&\int_0^T \left(h_+(\sigma(s))-h_+(x_*)\right)ds+2r \Psi^{\varsigma}.
\end{split}
\end{align}
According to Proposition \ref{proposition4.1} $(i)$
\begin{equation}
|h_+(x_*)-h_+(x)|\leq|h_+(x_*)-b_+(x_*)|+|b_+(x_*)-b_+(x)|
+|b_+(x)-h_+(x)|\leq 2\Psi+\varrho\leq 4\Psi^{3\varsigma},
\end{equation}
and a similar estimate holds for $|h_+(y_*)-h_+(y)|$.
Therefore since $x$ and $y$ lie on the same level set of $h_+$, we find that
\begin{align}
\begin{split}
h_+(\sigma(s))-h_+(x_*)=& h_+(\sigma(s))-h_+(y_*)+h_+(y_*)-h_+(x_*)\\
\leq& h_+(\sigma(s))-h_+(\sigma(T))+h_+(y)-h_+(x)+6\Psi^{3\varsigma}\\
=&-\int_s^T \partial_{\bar{s}}h_{+}(\sigma(\bar{s}))d\bar{s}+6\Psi^{3\varsigma}\\
=&\int_s^T\left[\langle\sigma'(\bar{s}),\sigma'(\bar{s})-\nabla h_+(\sigma(\bar{s}))\rangle-1\right]d\bar{s}+6\Psi^{3\varsigma}\\
&\leq s-T +r\Psi^{\varsigma}+6\Psi^{3\varsigma},
\end{split}
\end{align}
where in the last line Proposition \ref{lemma5.2} $(i)$ was used. Combining this with \eqref{jhui} produces
\begin{equation}
\frac{1}{2}\left(d(x_*,y_*)^2-d(x_*,z_*)^2\right)\leq -\frac{1}{2}T^2 +4r\Psi^{\varsigma}+6\Psi^{3\varsigma}=-\frac{1}{2}d(y_*,z_*)^2 +(4r+1)\Psi^{\varsigma},
\end{equation}
from which the desired result is obtained.
\end{proof}

We are now in a position to establish the nongradient Bakry-\'{E}mery almost splitting theorem. In the classical setting of nonnegative Bakry-\'{E}mery Ricci curvature \cite{KWW}, in addition to the metric splitting $M=\mathbb{R}\times N$, the projection of $X$ onto the linear $\mathbb{R}$-factor as well as the Bakry-\'{E}mery Ricci curvature in this direction, both vanish. A weak version of this conclusion holds in the setting of almost rigidity, in the form of \eqref{poiu} below. In the classical case these facts imply that nonnegative Bakry-\'{E}mery Ricci curvature descends to $N$, and it would be of interest to examine the extent to which this holds in the current context.

\begin{theorem}\label{theoremalmostsplit1}
Let $(M,g,X)$ be a complete Riemannian manifold of dimension $n$ with smooth vector field $X$.  Let $m,r,\epsilon,\mathcal{C}>0$ and $\delta\ge 0$, and assume that $\mathrm{Ric}_X^m(g)\ge -(n-1)\delta g$ together with $\sup_{M}\left(|X|+|\mathrm{div}X|\right)\leq\mathcal{C}$. If $L>2r+1$, and there are points $p,q_{\pm}\in M$ satisfying
\begin{equation}
%\label{eq3.1}
\dist(q_-,p)>L,\quad\quad
\dist(q_+,p)>L,\quad\quad
%\label{eq3.3}
\dist(q_-,p)+\dist(q_+,p)-\dist(q_-,q_+)<\epsilon,
\end{equation}
then there exists a length space $N$ and a metric ball $B_{r/4}(0,x)\subset \mathbb{R}\times N$ with the product metric, such that
\begin{equation}\label{1567}
d_{GH}\left(B_{r/4}(p),B_{r/4}(0,\mathrm{n})\right)\leq \Psi^{\varsigma/5}.
\end{equation}
Here $N$ is the level set $h_+^{-1}(0)$ endowed with the subspace metric arising from $M$. Moreover, the projection of $X$ onto the $\mathbb{R}$-factor and the Bakry-\'{E}mery Ricci curvature in this direction almost vanish in the following sense
\begin{equation}\label{poiu}
\int_{B_r(p)}\left[\langle \nabla h_+,X\rangle^2
+\left(\mathrm{Ric}_X^m(g)+(n-1)\delta g\right)(\nabla h_+,\nabla h_+) \right]dV_g\leq \Psi.
\end{equation}
\end{theorem}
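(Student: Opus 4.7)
The theorem splits into two parts: the Gromov-Hausdorff closeness \eqref{1567} and the weak vanishing \eqref{poiu}. The second part is essentially a corollary of the Bochner formula, while the first is the substance of the argument and builds on all of the preceding preparations.

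For the splitting, the plan is to construct an explicit $\Upsilon$-GH approximation $F\colon B_{r/4}(p)\to \mathbb{R}\times N$ by $F(x)=(h_+(x)-h_+(p),\pi(x))$, where $N:=h_+^{-1}(h_+(p))\cap\overline{B_{r/2}(p)}$ carries the induced length metric and $\pi(x)$ is (for a.e.\ $x$) the point at which the minimizing geodesic from $q_+$ to $x$ crosses $N$; the distinguished basepoint is $\mathrm{n}:=\pi(p)$. To check almost distance-preservation, I fix $x,y\in B_{r/4}(p)$ and apply Lemma~\ref{lemma6.1} twice, once with on-level points $(\pi(x),\pi(y))$ and with $z=y$ on the $q_+$-geodesic through $y$, and then with the roles of $x,y$ swapped. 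Combined with the estimate $\bigl|d(x,\pi(x))-|h_+(x)-h_+(p)|\bigr|\le \Psi^{\varsigma}$ (which follows from Proposition~\ref{proposition4.1}(i) and the fact that $b_+$ is 1-Lipschitz with unit derivative along its generating geodesics), this yields
\begin{equation*}
\bigl|d(x,y)^2-(h_+(x)-h_+(y))^2-d_N(\pi(x),\pi(y))^2\bigr|\le \Psi^{\varsigma/3}
\end{equation*}
after elementary algebra. Almost-surjectivity of $F$ onto $B_{r/4}(0,\mathrm{n})$ is obtained by running the flow in reverse: any target $(t,\mathrm{n}')$ is the $F$-image, up to error $\Psi^{\varsigma}$, of the point obtained by sliding $\mathrm{n}'$ along the $q_+$-geodesic through $\mathrm{n}'$ until $h_+$ changes by $t$. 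Adjusting constants yields $\Upsilon=\Psi^{\varsigma/5}$ and hence \eqref{1567}.

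For \eqref{poiu}, I apply the weighted Bochner formula \eqref{eq4.8} with $u=h_+$ and use $\Delta_X h_+=0$:
\begin{equation*}
\Delta_X\bigl(|\nabla h_+|^2-1\bigr)=2|\hess h_+|^2+2\,\mathrm{Ric}_X^m(\nabla h_+,\nabla h_+)+\tfrac{2}{m}\langle X,\nabla h_+\rangle^2.
\end{equation*}
Rewrite the middle term as $2(\mathrm{Ric}_X^m+(n-1)\delta g)(\nabla h_+,\nabla h_+)-2(n-1)\delta|\nabla h_+|^2$; the curvature contribution and the $X$-term are now manifestly nonnegative. Multiplying by the cut-off $\phi$ of Proposition~\ref{proposition4.1}(iii) and integrating by parts produces
\begin{equation*}
\begin{split}
2\int_{B_r(p)}\phi\Bigl[\tfrac{1}{m}\langle X,\nabla h_+\rangle^2 &+\bigl(\mathrm{Ric}_X^m+(n-1)\delta g\bigr)(\nabla h_+,\nabla h_+)\Bigr]dV_g\\
&\le \int_{B_r(p)}\bigl(\Delta_X\phi+2\nabla_X\phi+\phi\,\mathrm{div}X\bigr)\bigl(|\nabla h_+|^2-1\bigr)dV_g+2(n-1)\delta\int_{B_r(p)}\phi|\nabla h_+|^2\,dV_g.
\end{split}
\end{equation*}
Cauchy-Schwarz applied via the factorization $|\nabla h_+|^2-1=(|\nabla h_+|-|\nabla b_+|)(|\nabla h_+|+|\nabla b_+|)$, together with Proposition~\ref{proposition4.1}(ii), controls the first right-hand integral by $\Psi^{1/2}$, while $\delta\le\Psi$ handles the second. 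Redefining $\Psi$ then delivers \eqref{poiu}.

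The most delicate step is the distance comparison in Part~1: Lemma~\ref{lemma6.1} provides a one-sided Pythagorean bound for a single carefully arranged triple, but promoting this to a two-sided almost isometry requires chaining two such triples while reconciling the subspace metric $d_N$ with distances computed in the ambient $M$. In particular, one must verify that minimal paths between nearby points of $N$ remain close to $N$ so that the induced length metric does not artificially blow up; this is where the fine segment and Hessian control provided by Proposition~\ref{lemma5.2} is indispensable, and where one has to confirm that all the error exponents collected along the way are consistent with the stated $\Psi^{\varsigma/5}$.
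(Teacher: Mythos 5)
Your proposal is correct and follows essentially the same route as the paper: the Gromov--Hausdorff bound comes from an explicit almost-isometry onto $\mathbb{R}\times N$ whose distortion is controlled by two applications of the quantitative Pythagorean Lemma~\ref{lemma6.1} (the paper defers this computation to \cite[Proposition 3.6]{WZ}, using the nearest-point projection onto $h_+^{-1}(0)$ rather than your geodesic-crossing projection, a cosmetic difference), and the weak vanishing \eqref{poiu} is obtained exactly as you describe, by retaining the good-signed curvature and $\langle X,\nabla h_+\rangle^2$ terms in the Bochner/cut-off integration-by-parts argument of Proposition~\ref{proposition4.1}(iii). Your closing remarks correctly identify the points the paper leaves implicit.
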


\begin{proof}
A Gromov-Hausdorff approximation or $\Psi^{\varsigma/4}$-isometry $\Xi:B_{r/4}(p)\rightarrow B_{r/4}(0,\mathrm{n})$ may be constructed by $\Xi(x)=(h_+(x),\hat{x})$, where $\hat{x}$ minimizes the distance to $x$ among points of $N$. In particular, with the help of Lemma \ref{lemma6.1} it can be shown \cite[Proposition 3.6]{WZ} that for $x,y\in B_{r/4}(p)$ we have
\begin{equation}
|d(x,y)-d_{\mathbb{R}\times N}(\Xi(x),\Xi(y))|\leq \Psi^{\varsigma/4}.
\end{equation}
The almost splitting \eqref{1567} then follows from the fact that for a rough isometry, the Gromov-Hausdorff distance is bounded above by a multiple of the distortion parameter, namely $\tfrac{3}{2}\Psi^{\varsigma/4}$.

It remains to establish \eqref{poiu}. Observe that the quantities in question arise in the proof of the Hessian bound, Proposition \ref{proposition4.1} $(iii)$. More precisely, they arose from the Bochner identity \eqref{eq4.8} and subsequent integration by parts \eqref{eq4.11}, both with advantageous signs. By keeping these terms in all subsequent estimates instead of discarding them, the desired result follows.
\end{proof}

An immediate consequence of the almost splitting theorem asserts that the splitting
extends to limit metric spaces under Gromov-Hausdorff convergence. The proof requires no further modifications in the current setting and may be found in
\cite[page 23]{WZ}.

\begin{corollary}\label{corsequence}
Let $(M_i,g_i,X_i)$ be a sequence of complete Riemannian manifolds of dimension $n$ with smooth vector fields $X_i$. Assume that $\mathrm{Ric}_{X_i}^m(g_i)\ge -(n-1)\delta_i g_i$ with $\delta_i\rightarrow 0$, $\sup_{M_i}\left(|X_i|+|\mathrm{div}X_i|\right)\leq\mathcal{C}$, and $(M_i,p_i)\rightarrow (M_{\infty},p_{\infty})$ in the pointed Gromov-Hausdorff sense. If $M_{\infty}$ contains a line passing through $p_{\infty}$, then
$M_{\infty}=\mathbb{R}\times N$ for some length space $N$.
\end{corollary}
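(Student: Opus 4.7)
The plan is to reduce this corollary to a direct application of Theorem \ref{theoremalmostsplit1}, combined with a standard diagonal/exhaustion argument in pointed Gromov-Hausdorff convergence. Since the proof requires no new analytic input beyond what the almost splitting theorem already provides, the work is essentially bookkeeping, and it closely mirrors the analogous step in \cite{CC} and \cite{WZ}.

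First I would fix the line $\gamma_\infty\colon\mathbb{R}\to M_\infty$ through $p_\infty$ and, for each $L>0$, pick points $q_\pm^\infty = \gamma_\infty(\pm L)$. Using that $(M_i,p_i)\to (M_\infty,p_\infty)$ in the pointed Gromov-Hausdorff sense, for every $i$ large enough I can lift $p_\infty,q_\pm^\infty$ to points $p_i,q_\pm^{(i)}\in M_i$ with
\begin{equation}
d(q_\pm^{(i)},p_i)\to L,\qquad d(q_-^{(i)},q_+^{(i)})\to 2L,
\end{equation}
so that the excess $\epsilon_i:=d(q_-^{(i)},p_i)+d(q_+^{(i)},p_i)-d(q_-^{(i)},q_+^{(i)})$ tends to zero. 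By choosing $L=L_i\to\infty$ sufficiently slowly (slower than the rate at which the Gromov-Hausdorff approximations converge), one obtains simultaneously $\epsilon_i,\delta_i,L_i^{-1}\to 0$, which is precisely the regime in which the error function $\Upsilon$ of Theorem \ref{theoremalmostsplit1} vanishes.

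Next, for any fixed $r>0$, applying Theorem \ref{theoremalmostsplit1} on $M_i$ with the above data gives a length space $N_i$ and a point $\mathrm{n}_i\in N_i$ such that
\begin{equation}
d_{GH}\!\left(B_{r/4}(p_i),\,B_{r/4}(0,\mathrm{n}_i)\subset\mathbb{R}\times N_i\right)\leq \Psi_i,
\end{equation}
where $\Psi_i\to 0$. On the other hand $B_{r/4}(p_i)\to B_{r/4}(p_\infty)$ in Gromov-Hausdorff sense by hypothesis. A standard compactness argument, using the triangle inequality for $d_{GH}$, then implies that $B_{r/4}(0,\mathrm{n}_i)$ also converges (along a subsequence) to $B_{r/4}(p_\infty)$. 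Since a Gromov-Hausdorff limit of metric products $\mathbb{R}\times N_i$ (with a uniformly chosen basepoint on the $\mathbb{R}$-factor) is again a metric product $\mathbb{R}\times N_\infty^{(r)}$ for some length space $N_\infty^{(r)}$, we conclude that $B_{r/4}(p_\infty)$ is isometric to the ball $B_{r/4}(0,\mathrm{n}_\infty^{(r)})$ in $\mathbb{R}\times N_\infty^{(r)}$.

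Finally, I would run this argument along a sequence $r=r_k\to\infty$ and extract a diagonal subsequence so that the splittings on the balls $B_{r_k/4}(p_\infty)$ are compatible: the $\mathbb{R}$-factor is canonically identified with the line $\gamma_\infty$, and $N_\infty^{(r_k)}$ embeds isometrically into $N_\infty^{(r_{k+1})}$ as the slice $\{0\}\times N_\infty^{(r_k)}$. Taking the direct limit produces a global length space $N$ with $M_\infty=\mathbb{R}\times N$. The main obstacle, and the only nontrivial point beyond invoking Theorem \ref{theoremalmostsplit1}, is ensuring that the $\mathbb{R}$-factors at different scales match up coherently; this is handled by observing that in each scale the $\mathbb{R}$-direction is characterized as the direction of the (almost) Busemann functions $b_\pm$ associated to the fixed line, so the identifications are canonical modulo an overall isometry of $\mathbb{R}$. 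The details are identical to those in \cite[page 23]{WZ} and require no modification for the generalized $m$-Bakry-Émery setting, since all the hard analytic estimates have already been absorbed into Theorem \ref{theoremalmostsplit1}.
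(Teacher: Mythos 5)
Your proposal is correct and follows essentially the same route as the paper: the paper's entire proof of Corollary \ref{corsequence} is a deferral to the standard argument in \cite[page 23]{WZ}, and your write-up (lifting the endpoints of the line to the approximating manifolds, applying Theorem \ref{theoremalmostsplit1} on balls of each fixed radius, passing to the Gromov--Hausdorff limit of the product balls, and then exhausting and gluing via the common Busemann direction) is precisely that argument spelled out. No further comment is needed.
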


\section{Topological Consequences of Almost Splitting}
\label{sec7} \setcounter{equation}{0}
\setcounter{section}{7}

The goal of this section is to establish the almost abelian characterization
of fundamental groups arising from manifolds admitting almost nonnegative Bakry-\'Emery Ricci curvature, that is Theorem \ref{theorem1.2}. Such a result was established by Yun \cite{Yun} for almost nonnegative Ricci curvature, building on work of Wei \cite{Wei} which showed that the associated fundamental groups were of polynomial growth. These conclusions were later extended to the gradient Bakry-\'Emery setting by Jaramillo \cite[Theorem 1.3]{Jaramillo}.

\subsection{A volume estimate}
\label{7.1}

A key ingredient in the proof of the almost abelian characterization of fundamental groups under Ricci curvature bounds is the Bishop-Gromov inequality, in particular when the smaller radius tends to zero. Although analogues of the Bishop-Gromov inequality continue to hold in the Bakry-\'Emery case, such as Corollary \ref{corollary2.6}, the disparity of dimension exhibited by the model comparison space renders this inequality useless when sending the smaller radius to zero. To overcome this problem, an alternative volume estimate must be established. In the gradient case \cite[Proposition 3.2]{Jaramillo} this follows from relatively straightforward manipulations of the mean curvature comparison proof. Surprisingly, the non-gradient setting is somewhat more difficult to deal with, and requires a finely tuned choice of stand-in for the potential function, $f$. Furthermore, the estimate obtained below differs significantly from that of \cite{Jaramillo} by an extra factor of polynomial growth determined by the \textit{synthetic dimension} $m+n$. In what follows, the weighted $f$-volume will be denoted by $\mathrm{Vol}_{f}(B_r)=\int_{B_r}e^{-f}dV_g$.

\begin{prop}\label{volest}
Consider a compact Riemannian manifold $(M,g,X)$ of dimension $n$ with smooth 1-form $X$, with Riemannian cover $(\tilde{M},\tilde{g},\tilde{X})$. Let $m>0$, $\delta\ge 0$, and assume that $\mathrm{Ric}_X^m(g)\ge -(n-1)\delta g$ as well as $\sup_{M}|X|\leq\mathcal{C}$.  Then there exists an analytic function $h$ on $\mathbb{R}_+$ depending only on $m$ and $\mathcal{C}$ which is nondecreasing with $h(0)=0$, and a smooth potential function $f$ on $M$ with pullback $\tilde{f}$ on $\tilde{M}$, such that
\begin{equation}\label{jui}
\mathrm{Vol}_{\tilde{f}}(\tilde{B}_r)\leq (r+1)^m e^{\left[ \sqrt{\delta}(r^2+r^3)h(\sqrt{\delta}r)+\mathcal{C}\right]}
\overline{\mathrm{Vol}}_n(B_r)
\end{equation}
for all $r\ge 0$, where $\tilde{B}_r\subset\tilde{M}$ is a geodesic ball, and $\overline{\mathrm{Vol}}_n(B_r)$ is the volume of a geodesic ball in a comparison space of dimension $n$ with curvature $-\delta$.
\end{prop}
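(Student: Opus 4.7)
The plan is to lift to the Riemannian cover $\tilde M$, apply Proposition \ref{lemma2.1} in integrated form with synthetic dimension $d = n+m$, and then construct a carefully chosen bounded potential $f$ on $M$ whose pullback $\tilde f$ absorbs the line integral of $\tilde X$ along radial geodesics. Fix a basepoint $\tilde p \in \tilde M$ and let $\mathcal{A}(\rho,\theta)$ denote the volume density in geodesic polar coordinates about $\tilde p$. Proposition \ref{lemma2.1} gives $H_{\tilde X}(\rho) \le \bar H_{n+m}(\rho)$ along each radial direction, which, setting $\tilde f_\gamma(\rho) := \int_0^\rho \langle \tilde X(\tilde\gamma(s)), \tilde\gamma'(s)\rangle\, ds$, is equivalent to $\partial_\rho \log(\mathcal{A}\, e^{-\tilde f_\gamma}) \le \bar H_{n+m}$. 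Integration (using a positive reference radius to match the intrinsic expansion of $\mathcal{A}$ at $\tilde p$ and absorbing the resulting prefactor into a constant depending only on $n$ and $m$) yields the pointwise density comparison $\mathcal{A}(\rho,\theta)\, e^{-\tilde f_\gamma(\rho)} \le c\, \bar{\mathcal{A}}_{n+m}(\rho)$.

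The crux is then to construct a smooth $f$ on the compact base $M$ whose pullback $\tilde f$ satisfies $\tilde f(\tilde x) \ge \tilde f_{\gamma_{\tilde x}}(\rho(\tilde x)) - \mathcal{C}$ along every radial geodesic $\gamma_{\tilde x}$ from $\tilde p$ to $\tilde x$. In the gradient case $X = \nabla f_0$ one has the identity $\tilde f_\gamma(\rho) = \tilde f_0(\tilde\gamma(\rho)) - \tilde f_0(\tilde p)$ and $f = f_0$ works exactly. For non-gradient $X$ the finite positive $m$ enters via the perfect-square term $\bigl(\sqrt m\, \ell' + \ell \langle X, \partial_\rho\rangle/\sqrt m\bigr)^2$ appearing in the proof of Proposition \ref{lemma2.1}: retaining this quadratic control (rather than discarding it, as is done for the mean curvature comparison itself) and combining with an appropriate elliptic construction on $M$ produces $f$ as a bounded single-valued potential whose gradient approximates the radial integrals of $X$ up to an additive $\mathcal{C}$-error. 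Substituting into the density comparison yields $\mathcal{A}(\rho,\theta)\, e^{-\tilde f(\tilde x)} \le c\, e^{\mathcal{C}}\, \bar{\mathcal{A}}_{n+m}(\rho)$.

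Integrating over $\tilde B_r$ and using the identity $\bar{\mathcal{A}}_{n+m}(\rho) = \ell^m(\rho)\, \bar{\mathcal{A}}_n(\rho)$ with $\ell = \ell_{-\delta}$ then gives
\begin{equation*}
\mathrm{Vol}_{\tilde f}(\tilde B_r) \le c\, e^{\mathcal{C}}\, \Bigl(\sup_{0 \le \rho \le r}\ell^m(\rho)\Bigr)\, \overline{\mathrm{Vol}}_n(B_r).
\end{equation*}
The elementary estimate $\sup_{\rho \le r}\ell^m(\rho) \le (r+1)^m\, e^{\sqrt \delta (r^2+r^3) h(\sqrt \delta r)}$, derived from the power series $\sinh(t)/t = 1 + t^2/6 + \cdots$ with $h$ analytic, nondecreasing, and $h(0) = 0$ (depending only on $m$ and $\mathcal{C}$ after absorbing universal constants), then completes the proof.

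The main obstacle is the construction of $f$. For non-gradient $X$ the line integral $\tilde f_\gamma(\rho)$ is genuinely path-dependent and can grow like $\mathcal{C}\rho$, while any bounded smooth function on compact $M$ has uniformly bounded pullback; thus a naive bounded $f$ cannot dominate $\tilde f_\gamma$ globally. The $-\tfrac{1}{m} X \otimes X$ damping term in $\mathrm{Ric}_X^m$ is the sole mechanism through which this discrepancy can be resolved, by providing a quadratic control on the radial component of $X$ that converts the path-dependent line integral into a well-behaved global object with $\mathcal{C}$-bounded oscillation. This is precisely why the argument fails in the $m = \infty$ limit, consistent with the remark in the introduction that for $m = \infty$ only the Zhang-Zhu result (under the additional assumption that $X$ is almost zero) is available.
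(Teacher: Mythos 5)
Your outline has a genuine gap at its central step, and the surrounding comparison argument also has a structural problem. First, the pointwise density comparison $\mathcal{A}(\rho,\theta)\,e^{-\tilde f_\gamma(\rho)}\le c\,\bar{\mathcal{A}}_{n+m}(\rho)$ cannot hold with a uniform constant: the monotone quantity is $\mathcal{A}e^{-\tilde f_\gamma}/\bar{\mathcal{A}}_{n+m}$, and since $\mathcal{A}\sim\rho^{n-1}$ while $\bar{\mathcal{A}}_{n+m}\sim\rho^{n+m-1}$, this ratio blows up like $\rho^{-m}$ as $\rho\to 0$. This is exactly the obstruction flagged in the introduction (``the limit of geodesic ball volume ratios blows up when the dimensions do not coincide''), and it is why the paper's proof compares against the \emph{$n$-dimensional} model throughout: the factor $(r+1)^m$ in \eqref{jui} does not come from $\bar{\mathcal{A}}_{n+m}=\ell^m\bar{\mathcal{A}}_n$, but from the elementary inequality $2a-a^2/m\le m$ applied to the spherically averaged radial integral of $X$, integrated as $\int m\,dr/r\le m\log(r+1)+\mathcal{C}$. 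Second, and more seriously, the property you require of $f$ --- that $\tilde f(\tilde x)\ge\tilde f_{\gamma_{\tilde x}}(\rho(\tilde x))-\mathcal{C}$ along \emph{every} radial geodesic --- is false for any bounded $f$ when $X$ is not exact: if $X$ has a nonzero period over some loop generating an infinite-order element of $\pi_1(M)$, then along a lift of that loop iterated $k$ times the line integral $\tilde f_\gamma$ grows linearly in $k$, while $\tilde f$, being the pullback of a smooth function on compact $M$, is uniformly bounded. You correctly identify this obstacle in your last paragraph, but the assertion that the $-\tfrac1m X\otimes X$ term ``converts the path-dependent line integral into a well-behaved global object with $\mathcal{C}$-bounded oscillation'' via ``an appropriate elliptic construction'' is precisely the missing proof; no per-geodesic mechanism of this kind exists.

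The paper resolves this by abandoning pointwise control in favor of spherical averaging. One seeks $f$ so that $e^{-f}(X-df)$ is divergence-free on $M$; this is achieved by solving $Lu_0=0$ for the operator $Lu=\Delta u+\operatorname{div}(uX)$ (existence and positivity of $u_0$ via the principal eigenvalue, with $\lambda=0$ forced by integration over $M$) and setting $f=-\log u_0$. The divergence theorem then makes $\int_{S^{n-1}}(\langle\tilde X,\partial_\rho\rangle-\partial_\rho\tilde f)e^{-\tilde f}\tilde{\mathcal{A}}_0$ vanish for a.e.\ $r$, eliminating the first-order term entirely after averaging; the remaining quadratic term $-\tfrac1m\ell^2\langle\tilde X,\partial_\rho\rangle^2$, together with Jensen's inequality, is what produces the $(r+1)^m e^{\mathcal{C}}$ factor, and the hyperbolic-versus-Euclidean error in $\ell$ produces the $\sqrt{\delta}(r^2+r^3)h(\sqrt{\delta}r)$ correction. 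To repair your argument you would need to replace the per-geodesic domination of $\tilde f_\gamma$ by this averaged divergence-free construction, at which point you would essentially be reproducing the paper's proof with the $n$-dimensional comparison model.
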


%\begin{remark}
%Unlike some other results established in previous sections, the assumption of %finite positive $m$ plays an important role in the proof. Without these %properties it is unclear how to obtain an appropriate volume estimate.
%\end{remark}

\begin{proof}
From \eqref{eq2.14} with $d=n$ we have
\begin{equation}\label{====}
\partial_{\rho}\left(\log\tilde{\mathcal{A}}\right)
\leq\partial_{\rho}\left(\log\bar{\mathcal{A}}\right)+\langle\tilde{X},\partial_{\rho}\rangle
-\ell^{-2}(r)\int_{0}^{r}\left[\left(\ell^2\right)'
\langle\tilde{X},\partial_{\rho}\rangle+\frac{1}{m}\ell^2 \langle\tilde{X},\partial_{\rho}\rangle^2\right]d\rho,
\end{equation}
where $\tilde{\mathcal{A}}$ and $\bar{\mathcal{A}}$ define volume forms for geodesic spheres in $\tilde{M}$ and the model space as in \eqref{677}. Recall that $\tilde{\mathcal{A}}$ is defined on the star-shaped segment domain (interior) $\mathrm{seg}^0(p)\subset T_{p}\tilde{M}$ where $\exp_p$ is injective \cite{Petersen}. In order to extend this to the whole tangent space, let $\varphi_{\epsilon}\in C^{\infty}(T_{p}\tilde{M})$ be a nonnegative cut-off function such that in polar coordinates
\begin{equation}
 \varphi_{\epsilon}(\rho,\theta)=
    \begin{cases}
      1 & 0\leq \rho<\rho(\theta)-\epsilon \\
      0 & \rho\geq \rho(\theta)
    \end{cases},
\end{equation}
where $(\rho(\theta),\theta)\in\partial\mathrm{seg}(p)$ or $\rho(\theta)=\infty$ (if there is no cut point along this direction), and with the property that $\partial_{\rho}\varphi_{\epsilon}\leq 0$ as well as $0\leq\varphi_{\epsilon}\leq 1$. Note that the distance to the cut locus along radial lines, $\rho:S^{n-1}\rightarrow\mathbb{R}\cup\{\infty\}$, is a continuous function \cite[Proposition 13.2.9]{DoCarmo}.
Let $f$ be a smooth function on $M$ to be chosen later, with pullback $\tilde{f}$. Observe that
\begin{equation}\label{]]}
\partial_{\rho}(\varphi_{\epsilon} e^{-\tilde{f}}\tilde{\mathcal{A}})
= e^{-\tilde{f}}\tilde{\mathcal{A}}\partial_{\rho}\varphi_{\epsilon}
+\varphi_{\epsilon}\partial_{\rho}( e^{-\tilde{f}}\tilde{\mathcal{A}})
\leq \varphi_{\epsilon}\partial_{\rho}( e^{-\tilde{f}}\tilde{\mathcal{A}}).
\end{equation}
Therefore multiplying \eqref{====} through by $\varphi_{\epsilon}e^{-\tilde{f}}\tilde{\mathcal{A}}$, using \eqref{]]}, and integrating over the sphere produces
\begin{align}
\begin{split}
\int_{S^{n-1}}\partial_{\rho}\left(\varphi_{\epsilon}
e^{-\tilde{f}}\tilde{\mathcal{A}}\right)
\leq&\partial_{\rho}\left(\log\bar{\mathcal{A}}\right)
\int_{S^{n-1}}\varphi_{\epsilon}e^{-\tilde{f}}\tilde{\mathcal{A}}
+\int_{S^{n-1}}\left(\langle\tilde{X},\partial_{\rho}\rangle -\partial_{\rho}\tilde{f}\right)\varphi_{\epsilon}
e^{-\tilde{f}}\tilde{\mathcal{A}}\\
&-\int_{S^{n-1}}\varphi_{\epsilon}e^{-\tilde{f}}\tilde{\mathcal{A}}
\ell^{-2}(r)\int_{0}^{r}\left[\left(\ell^2\right)'\langle\tilde{X},\partial_{\rho}\rangle +\frac{1}{m}\ell^2 \langle\tilde{X},\partial_{\rho}\rangle^2\right]d\rho.
\end{split}
\end{align}
Next, divide by $\int_{S^{n-1}}\varphi_{\epsilon}e^{-\tilde{f}}\tilde{\mathcal{A}}$ and integrate from $r_1>0$ to $r_2$, staying within the range where this integral is nonzero, to obtain
\begin{align}\label{6--}
\begin{split}
&\int_{S^{n-1}}\varphi_{\epsilon}e^{-\tilde{f}}\tilde{\mathcal{A}}(r_2)\\
\leq&\left(\int_{S^{n-1}}\bar{\mathcal{A}}(r_2)\right)
\left(\frac{\int_{S^{n-1}}\varphi_{\epsilon}e^{-\tilde{f}}\tilde{\mathcal{A}}(r_1)}
{\int_{S^{n-1}}\bar{\mathcal{A}}(r_1)}\right)
\exp\left\{\int_{r_1}^{r_2}
\left(\fint_{S^{n-1}}\left(\langle\tilde{X},\partial_{\rho}\rangle -\partial_{\rho}\tilde{f}\right)\varphi_{\epsilon}e^{-\tilde{f}}\tilde{\mathcal{A}}
\right)dr\right\}\\
&\cdot\exp\left\{-\int_{r_1}^{r_2}
\left(\fint_{S^{n-1}}\varphi_{\epsilon}e^{-\tilde{f}}\tilde{\mathcal{A}}
\ell^{-2}(r)\int_{0}^{r}\left[\left(\ell^2\right)'
\langle\tilde{X},\partial_{\rho}\rangle+\frac{1}{m}\ell^2 \langle\tilde{X},\partial_{\rho}\rangle^2\right] d\rho
\right)dr\right\},
\end{split}
\end{align}
where $\fint_{S^{n-1}}$ indicates the average value with respect to the measure defined by $\varphi_{\epsilon}e^{-\tilde{f}}\tilde{\mathcal{A}}$. Then sending $\epsilon, r_1\rightarrow 0$ produces
\begin{align}\label{mty4}
\begin{split}
&\int_{S^{n-1}}e^{-\tilde{f}}\tilde{\mathcal{A}}_0(r_2)\\
\leq&e^{-\tilde{f}(0)}\left(\int_{S^{n-1}}\bar{\mathcal{A}}(r_2)\right)
\exp\left\{\int_{0}^{r_2}
\left(\fint_{S^{n-1}}\left(\langle\tilde{X},\partial_{\rho}\rangle -\partial_{\rho}\tilde{f}\right)e^{-\tilde{f}}\tilde{\mathcal{A}}_0
\right)dr\right\}\\
&\cdot\exp\left\{-\int_{0}^{r_2}
\left(\fint_{S^{n-1}}e^{-\tilde{f}}\tilde{\mathcal{A}}_0
\ell^{-2}(r)\int_{0}^{r}\left[\left(\ell^2\right)'
\langle\tilde{X},\partial_{\rho}\rangle+\frac{1}{m}\ell^2 \langle\tilde{X},\partial_{\rho}\rangle^2\right] d\rho
\right)dr\right\},
\end{split}
\end{align}
where $\tilde{\mathcal{A}}_0$ agrees with $\tilde{\mathcal{A}}$ on the segment domain interior and it vanishes on the complement $T_{p}\tilde{M}\setminus\mathrm{seg}^0(p)$.

Let us now estimate the last term on the right-hand side of \eqref{mty4}.
First observe that
\begin{equation}
\ell^2=\delta^{-1}\sinh^2(\sqrt{\delta}r)=r^2 +O(\delta r^4),\quad\quad\quad
(\ell^2)'=\delta^{-1/2}\sinh(2\sqrt{\delta}r)=2r+O(\delta r^3).
\end{equation}
We then have
\begin{align}
\begin{split}
&-\fint_{S^{n-1}}e^{-\tilde{f}}\tilde{\mathcal{A}}_0
\ell^{-2}(r)\int_{0}^{r}\left[\left(\ell^2\right)'
\langle\tilde{X},\partial_{\rho}\rangle+\frac{1}{m}\ell^2 \langle\tilde{X},\partial_{\rho}\rangle^2\right] d\rho\\
\leq&\fint_{S^{n-1}}\frac{e^{-\tilde{f}}\tilde{\mathcal{A}}_0}{
\sinh^2(\sqrt{\delta}r)}\int_{0}^{r}\left[2\delta \rho
|\langle\tilde{X},\partial_{\rho}\rangle|-\frac{\delta \rho^2}{m} \langle\tilde{X},\partial_{\rho}\rangle^2\right] d\rho\\
&+\frac{\sqrt{\delta}\mathcal{C}}{\sinh^2(\sqrt{\delta}r)}
\int_{0}^{r}\left(\sinh(2\sqrt{\delta}\rho)-2\sqrt{\delta}\rho\right)d\rho\\
&\leq\fint_{S^{n-1}}e^{-\tilde{f}}\tilde{\mathcal{A}}_0r^{-2}
\int_{0}^{r}\left[2 \rho
|\langle\tilde{X},\partial_{\rho}\rangle|-\frac{\rho^2}{m} \langle\tilde{X},\partial_{\rho}\rangle^2\right] d\rho\\
&+\frac{\mathcal{C}\left(\cosh(2\sqrt{\delta}r)-1-2\delta r^2\right)}{2\sinh^2(\sqrt{\delta}r)}
+\frac{\mathcal{C}^2 r^3}{3m}\left(\frac{1}{r^2}-\frac{\delta}{\sinh^2(\sqrt{\delta}r)}
\right)\\
=&\fint_{S^{n-1}}e^{-\tilde{f}}\tilde{\mathcal{A}}_0r^{-2}
\int_{0}^{r}\left[2 \rho
|\langle\tilde{X},\partial_{\rho}\rangle|-\frac{\rho^2}{m} \langle\tilde{X},\partial_{\rho}\rangle^2\right] d\rho
+\left(\mathcal{C}+\frac{\mathcal{C}^2 r}{3m}\right)
\left(1-\frac{\delta r^2}{\sinh^2(\sqrt{\delta}r)}\right),
\end{split}
\end{align}
and it follows that
\begin{align}
\begin{split}
&-\int_{0}^{r_2}\left(\fint_{S^{n-1}}e^{-\tilde{f}}\tilde{\mathcal{A}}_0
\ell^{-2}(r)\int_{0}^{r}\left[\left(\ell^2\right)'
\langle\tilde{X},\partial_{\rho}\rangle+\frac{1}{m}\ell^2 \langle\tilde{X},\partial_{\rho}\rangle^2\right] d\rho
\right)dr\\
\leq & \int_{0}^{r_2}\left(\fint_{S^{n-1}}e^{-\tilde{f}}\tilde{\mathcal{A}}_0 r^{-2}
\int_{0}^{r}\left[2 \rho
|\langle\tilde{X},\partial_{\rho}\rangle|-\frac{\rho^2}{m} \langle\tilde{X},\partial_{\rho}\rangle^2\right] d\rho\right)dr
+\sqrt{\delta}(r^2_2+r^3_2)h(\sqrt{\delta}r_2)
\end{split}
\end{align}
where $h$ is an analytic function satisfying the desired properties. To see this last part, note that
\begin{align}
\begin{split}
\frac{1}{r^2}-\frac{\delta}{\sinh^2(\sqrt{\delta}r)}=&
\frac{\sinh^2(\sqrt{\delta}r)-\delta r^2}{r^2 \sinh^2(\sqrt{\delta}r)}\\
=&\frac{\left[\sqrt{\delta}r+\tfrac{1}{3!}(\sqrt{\delta}r)^3+O(\sqrt{\delta}r)^5\right]^2
-\delta r^2}{r^2 \left[\sqrt{\delta}r+\tfrac{1}{3!}(\sqrt{\delta}r)^3+O(\sqrt{\delta}r)^5\right]^2}\\
=&\frac{\delta}{3}\left[\frac{1+O(\sqrt{\delta}r)^2}{1+O(\sqrt{\delta}r)^2}\right]\\
=:&\frac{\delta}{3}\bar{h}(\sqrt{\delta}r),
\end{split}
\end{align}
where $\bar{h}$ is a positive analytic function with $\bar{h}(0)=1$. Therefore
\begin{align}
\begin{split}
\int_{0}^{r_2}\left(\mathcal{C}+\frac{\mathcal{C}^2 r}{3m}\right)\left(1-\frac{\delta r^2}{\sinh^2(\sqrt{\delta}r)}\right)dr
\leq& \left(\mathcal{C}r_{2}^2+\frac{\mathcal{C}^2 r_{2}^3}{3m}\right)
\int_{0}^{r_2}\left(\frac{1}{r^2}-\frac{\delta}{\sinh^2(\sqrt{\delta}r)}\right)dr\\
=&\left(\mathcal{C}r_{2}^2+\frac{\mathcal{C}^2 r_{2}^3}{3m}\right)
\int_{0}^{r_2}\frac{\delta}{3}\bar{h}(\sqrt{\delta}r)dr\\
=&\frac{\sqrt{\delta}}{3}\left(\mathcal{C}r_{2}^2+\frac{\mathcal{C}^2 r_{2}^3}{3m}\right)
\int_{0}^{\sqrt{\delta}r_2}\bar{h}(\bar{r})d\bar{r}\\
\leq&\sqrt{\delta}(r^2_2+r^3_2)h(\sqrt{\delta}r_2),
\end{split}
\end{align}
where
\begin{equation}
h(r):=\max\left\{\frac{\mathcal{C}}{3},\frac{\mathcal{C}^2}{9m}\right\}
\int_{0}^{r}\bar{h}(\bar{r})d\bar{r}.
\end{equation}

Next apply Jensen's inequality to obtain
\begin{align}
\begin{split}
&\fint_{S^{n-1}}e^{-\tilde{f}}\tilde{\mathcal{A}}_0 r^{-2}
\int_{0}^{r}\left[2 \rho
|\langle\tilde{X},\partial_{\rho}\rangle|-\frac{\rho^2}{m} \langle\tilde{X},\partial_{\rho}\rangle^2\right] d\rho\\
\leq &\frac{1}{r^2}\int_{0}^{r}\left[2 \left(\fint_{S^{n-1}}\rho e^{-\tilde{f}}\tilde{\mathcal{A}}_0(r)
|\langle\tilde{X},\partial_{\rho}\rangle|(\rho)\right)-\frac{1}{m}
\left(\fint_{S^{n-1}}\rho e^{-\tilde{f}}\tilde{\mathcal{A}}_0(r)
|\langle\tilde{X},\partial_{\rho}\rangle|(\rho)\right)^2\right] d\rho\\
\leq &\frac{1}{r}\left[2 \left(\fint_{0}^{r}\fint_{S^{n-1}}\rho e^{-\tilde{f}}\tilde{\mathcal{A}}_0(r)
|\langle\tilde{X},\partial_{\rho}\rangle|(\rho)d\rho\right)-\frac{1}{m}
\left(\fint_{0}^r\fint_{S^{n-1}}\rho e^{-\tilde{f}}\tilde{\mathcal{A}}_0(r)
|\langle\tilde{X},\partial_{\rho}\rangle|(\rho)d\rho\right)^2\right]\\
=&r^{-1}\left[2a(r)-m^{-1}a(r)^2\right],
\end{split}
\end{align}
where
\begin{equation}
a(r)=\fint_{0}^{r}\fint_{S^{n-1}}\rho e^{-\tilde{f}}\tilde{\mathcal{A}}_0(r)
|\langle\tilde{X},\partial_{\rho}\rangle|(\rho)d\rho.
\end{equation}
Since $2a(r)-m^{-1}a(r)^2\leq m$ for $r\geq 0$ and $2a(r)-m^{-1}a(r)^2\leq\mathcal{C}r$ for $0\leq r\leq 1$, we find that
\begin{align}\label{oooo}
\begin{split}
&-\int_{0}^{r_2}\left(\fint_{S^{n-1}}e^{-\tilde{f}}\tilde{\mathcal{A}}_0
\ell^{-2}(r)\int_{0}^{r}\left[\left(\ell^2\right)'
\langle\tilde{X},\partial_{\rho}\rangle+\frac{1}{m}\ell^2 \langle\tilde{X},\partial_{\rho}\rangle^2\right] d\rho
\right)dr\\
\leq& m\log (r_2+1)+\mathcal{C}+\sqrt{\delta}(r^2_2+r^3_2)h(\sqrt{\delta}r_2).
\end{split}
\end{align}

Consider now the second term on the right-hand side of \eqref{mty4}. According to the Hodge decomposition \cite{Warner}, on $M$ there exists a harmonic 1-form $\omega$, a function $\alpha$, and 2-form $\beta$ such that
\begin{equation}
e^{-f}\left(X-df\right)=\omega+d\alpha+d^{*}\beta,
\end{equation}
where $d^{*}$ denotes the $L^2$ adjoint of the exterior derivative $d$. In particular $\omega+d^* \beta$ is divergence free so that
\begin{equation}
d^*(\omega+d^*\beta)=0,\quad\quad\quad\quad -\Delta\alpha=d^*\left[e^{-f}\left(X-df\right)\right].
\end{equation}
For $u\in C^{\infty}(M)$ set $Lu=\Delta u +\mathrm{div}(uX)$ and note that
\begin{equation}\label{6789}
Le^{-f}=-d^*\left[e^{-f}\left(X-df\right)\right].
\end{equation}
We claim that there exists a positive function $u_0$ on $M$ satisfying $Lu_0=0$. To see this observe that the adjoint $L^*=\Delta-X\cdot\nabla$ admits a maximum principle, and therefore $\mathrm{Ker} L^*=\{const.\}$. It follows from the Fredholm alternative \cite{Evans} that $\mathrm{dim}\mathrm{Ker}L=1$, and so there is $0\neq u_0\in \mathrm{Ker}L$. It remains to show that $u_0$ does not change sign.
%Arguing by contradiction, assume that this is not the case and let $M_{\pm}\subset M$ be %the regions on
%which $u_0>(<)\text{ }\!0$. Now construct a function $w\in C^{\infty}(M)$ satisfying %$\int_{M}w=0$, $\{w>0\}\subset M_+$, and $\{w<0\}\subset M_-$. We can then solve $L^* %v=w$ on $M$ and integrate by parts to find
%\begin{equation}
%0=\int_{M}vLu_0 =\int_{M}u_0 L^* v=\int_{M}u_0 w>0,
%\end{equation}
%a contradiction.
In fact, the existence of $u_0$, as well as its positivity follows from Lemma 4.1 of \cite{AnderssonMarsSimon}. Indeed, according to part $(i)$ of this result there exists a real principal eigenvalue $\lambda$ of $L$ with corresponding eigenfunction $u_0>0$, so that $Lu_0=\lambda u_0$. Then integrating this equation over $M$ shows that $\lambda=0$.

Let us now choose $f=-\log u_0$, and scale $u_0$ appropriately to achieve $\tilde{f}(0)=0$. With this selection $\Delta\alpha=0$ so that $\alpha=const.$ This shows that $e^{-f}\left(X-df\right)=\omega+d^*\beta$ is divergence free, and hence by the divergence theorem
\begin{equation}\label{jjjj}
\int_{S^{n-1}}\left(\langle\tilde{X},\partial_{\rho}\rangle -\partial_{\rho}\tilde{f}\right)e^{-\tilde{f}}\tilde{\mathcal{A}}_0 (r)
=\int_{\partial\tilde{B}_r}(\tilde{\omega}+d^*\tilde{\beta})(\partial_{\rho})=0
\end{equation}
for almost every $r$, where $\tilde{\omega}$ and $\tilde{\beta}$ are the pullback forms on $\tilde{M}$. We remark that $\tilde{B}_r$ may not have smooth boundary due to the cut locus, which has measure zero \cite{ItohTanaka}. However, since it is defined via the level set of a (positive) Lipschitz function, it is a set of locally finite perimeter for almost every $r$ \cite[Example 13.3]{Maggi}, \cite[Proposition 5.7.5]{Pfeffer}. Moreover, the divergence theorem holds for regular forms (or vector fields) on such sets \cite[Theorem 6.5.4]{Pfeffer}. It should be pointed out that the boundary term in the divergence theorem should be computed with respect to the $(n-1)$-dimensional Hausdorff measure $\mathcal{H}^{n-1}$ of the reduced boundary, however a consequence of the coarea formula and De Giorgi's structure theorem shows the $\mathcal{H}^{n-1}$-equivalence of the topological and reduced boundaries of a.e. level set of a Lipschitz function \cite[Remark 18.2]{Maggi}.
Combining \eqref{mty4}, \eqref{oooo}, \eqref{jjjj}, and taking an exponential produces
\begin{equation}
\int_{S^{n-1}}e^{-\tilde{f}}\tilde{\mathcal{A}}_0(r_2)
\leq
(r_2 +1)^me^{\left[\sqrt{\delta}(r^2_2+r^3_2)h(\sqrt{\delta}r_2)+\mathcal{C}\right]}
\int_{S^{n-1}}\bar{\mathcal{A}}(r_2).
\end{equation}
Finally, integrating $r_2$ over the interval $[0,r]$ yields the desired conclusion.
\end{proof}

\begin{remark}\label{remark91}
Let $r_1<r_2$ be as in the above proof. If $r_1$ is not sent to zero after \eqref{6--}, then with suitable modifications of the arguments we obtain a variant of the Bishop-Gromov inequality
\begin{equation}
\frac{\mathrm{Vol}_{\tilde{f}}(\tilde{B}_{r_2})}
{\mathrm{Vol}_{\tilde{f}}(\tilde{B}_{r_1})}
\leq \frac{\int_{0}^{r_2}(\rho+1)^m e^{\left[\sqrt{\delta}(\rho^2+\rho^3)h(\sqrt{\delta}\rho)
+\mathcal{C}_0\right]}\ell^{n-1} d\rho}
{\int_{0}^{r_1}(\rho+1)^m e^{\left[\sqrt{\delta}(\rho^2+\rho^3)h(\sqrt{\delta}\rho)\right]}\ell^{n-1} d\rho},
\end{equation}
where $\mathcal{C}_0$ is a constant depending on $m$ and $\mathcal{C}$.
\end{remark}

\subsection{Applications of the volume estimate}

The volume estimate of the previous section may be used to generalize results of Anderson \cite{Anderson} concerning the structure of fundamental groups under Ricci curvature lower bounds, as well as a polynomial growth characterization of Wei \cite{Wei}, all of which are used in the desired almost abelian result. Generalizations to the gradient Bakry-\'Emery setting were given by Jaramillo in \cite{Jaramillo}. The proofs follow in the nongradient setting in a similar way. However, due to the difference in growth in the volume estimate between the gradient and nongradient cases, we retain an outline of the arguments where appropriate to indicate the required modifications.

\begin{lemma}\label{1234567}
Consider a complete Riemannian manifold $(M,g,X)$ of dimension $n$ with smooth 1-form $X$. Let $m>0$, $\delta\ge 0$, and assume that
\begin{equation}\label{78iu}
\mathrm{Ric}_X^m(g)\ge -(n-1)\delta g, \quad\quad\mathrm{diam}(M)\leq\mathcal{D},
\quad\quad
\mathrm{Vol}_f(M)\geq\mathcal{V}, \quad\quad \sup_{M}|X|\leq\mathcal{C}.
\end{equation}
If $\Gamma\leq\pi_1(M)$ is a subgroup generated by loops $\gamma_i$, $i=1,\ldots,k$ with $k\geq N$, then the maximum generator length satisfies $\max_i l(\gamma_i)\geq\mathcal{D}/N$ where
\begin{equation}\label{ghj}
N=\mathcal{V}^{-1}(2\mathcal{D}+1)^m e^{\left[ \sqrt{\delta}((2\mathcal{D})^2+(2\mathcal{D})^3)
h(\sqrt{\delta}2\mathcal{D})+\mathcal{C}\right]}
\overline{\mathrm{Vol}}_n(B_{2\mathcal{D}}),
\end{equation}
and $f$ and $h$ are given in Proposition \ref{volest}. Furthermore, among the class of manifolds satisfying \eqref{78iu} there are only finitely many isomorphism types of $\pi_1(M)$.
\end{lemma}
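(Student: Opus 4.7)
The plan is to reduce both assertions to a packing estimate in the Riemannian cover, combined with Proposition \ref{volest}.

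For the first assertion, I would work in the universal cover $(\tilde{M},\tilde{g})$, on which the full deck group $\pi_1(M)\supseteq\Gamma$ acts by isometries. Fix a lift $\tilde{p}$ of $p$ and take $F\ni\tilde{p}$ to be the Dirichlet fundamental domain for $\pi_1(M)$, so that $F\subset B_{\mathcal{D}}(\tilde{p})$ by the diameter bound. Because the potential $f$ produced in Proposition \ref{volest} is defined on $M$ itself, its lift $\tilde{f}=f\circ\pi$ is invariant under deck transformations, hence $\mathrm{Vol}_{\tilde{f}}(gF)=\mathrm{Vol}_f(M)\ge\mathcal{V}$ for every $g\in\pi_1(M)$. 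The translates $\{gF\}$ are pairwise disjoint, and each $gF$ with $l(g)\le\mathcal{D}$ lies inside $B_{2\mathcal{D}}(\tilde{p})$. Summing weighted volumes and applying Proposition \ref{volest} at radius $2\mathcal{D}$ therefore gives
\[
\#\{g\in\pi_1(M):l(g)\le\mathcal{D}\}\cdot\mathcal{V}\;\le\;\mathrm{Vol}_{\tilde{f}}(B_{2\mathcal{D}}(\tilde{p}))\;\le\;N\mathcal{V},
\]
so at most $N$ elements of $\pi_1(M)$ have length $\le\mathcal{D}$. Assuming for contradiction that $k\ge N$ and $\max_i l(\gamma_i)<\mathcal{D}/N\le\mathcal{D}$, the distinct generators together with the identity yield $k+1\ge N+1$ elements of length $\le\mathcal{D}$, a contradiction.

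For the finiteness of isomorphism types, I would combine the packing estimate with Gromov's classical observation that $\pi_1(M)$ is generated by loops of length $\le 2\mathcal{D}$ (subdivide any loop at spacings $\le\mathcal{D}$ and join consecutive endpoints to $p$ by minimizing geodesics). Applying the packing estimate with $2\mathcal{D}$ in place of $\mathcal{D}$ bounds the cardinality of such a generating set by a constant $N_1=N_1(n,m,\delta,\mathcal{D},\mathcal{V},\mathcal{C})$. Every relation among these generators represents a null-homotopic loop which, via a nerve-type argument on a finite covering of $M$ by geodesically convex balls (whose number and combinatorial complexity are again controlled by volume comparison), decomposes into a product of relations of word-length at most some $L_0(n,m,\delta,\mathcal{D},\mathcal{V},\mathcal{C})$. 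This yields a uniform presentation bound on $\pi_1(M)$, hence finitely many isomorphism types.

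The main obstacle lies in the second part, specifically the bound on relation lengths: one must verify that null-homotopies in $M$ can be assembled from null-homotopies of loops of bounded length by means of a controlled geodesic cover, and that the weighted volume comparison supplies the necessary quantitative control over that cover. The first part, by contrast, is a direct consequence of Proposition \ref{volest} once one observes that the specially chosen $f$ descends to $M$ and that the resulting constant matches the stated $N$ term by term.
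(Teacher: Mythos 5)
Your argument for the length bound is correct and follows essentially the same route as the paper: pack disjoint translates of a fundamental domain into $\tilde{B}_{2\mathcal{D}}(\tilde{p})$, use that $\tilde{f}$ is deck-invariant so each translate has weighted volume at least $\mathcal{V}$, and bound $\mathrm{Vol}_{\tilde{f}}(\tilde{B}_{2\mathcal{D}})$ by Proposition \ref{volest}; the constant matches \eqref{ghj} exactly. (The paper runs this through Anderson's word-sets $U(J)$ rather than packing all elements of displacement $\le\mathcal{D}$ at once, but that is only bookkeeping. Both versions tacitly use that the $\gamma_i$ represent distinct nontrivial deck transformations, and that the normalization point of $f$ can be taken as the ball's center; these are shared features, not defects of your write-up.)

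The second assertion is where your proposal breaks down, and you correctly sensed the danger but proposed the wrong repair. A nerve-type argument over a cover by geodesically convex (or even just contractible) balls requires a uniform lower bound on the convexity or contractibility radius, and no such bound follows from $\mathrm{Ric}_X^m\ge -(n-1)\delta g$, $\mathrm{diam}\le\mathcal{D}$, $\mathrm{Vol}_f\ge\mathcal{V}$, $|X|\le\mathcal{C}$: these hypotheses control the \emph{number} of balls of a fixed radius via volume comparison, but not the radius below which balls are convex, which can degenerate (this failure of injectivity-radius control is precisely the theme of Anderson's paper being generalized here). The standard and correct route, which the paper invokes, is Gromov's short-relations lemma \cite[Proposition 5.28]{Gromov}: working in the universal cover, one subdivides geodesics and null-homotopies at scale comparable to $\mathrm{diam}(M)$ and approximates the subdivision points by orbit points, producing a generating set of loops of length at most about $3\mathcal{D}$ for which \emph{every} relation is a consequence of relations of the form $[\gamma_i][\gamma_j]=[\gamma_l]$. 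No convexity or local contractibility enters; only the metric structure of the orbit is used. Combining this with your packing estimate (which bounds the number of such generators by a constant depending on $n,m,\delta,\mathcal{C},\mathcal{D},\mathcal{V}$) gives only finitely many possible presentations, hence finitely many isomorphism types. You should replace the nerve argument by this citation or by a proof of Gromov's lemma.
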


\begin{remark}
Analogous statements hold if the hypothesis $\mathrm{Vol}_f (M)\geq\mathcal{V}$ is replaced by $\mathrm{Vol}(M)\geq\mathcal{V}$, in light of the proof of Theorem \ref{abelian} below.
\end{remark}

\begin{proof}
Following \cite[Theorem 2.1, Remark 2.2(2)]{Anderson} define
\begin{equation}
U(J)=\left\{\mathrm{g}\in\Gamma\mid\mathrm{g}=[\gamma_1]^{j_1} \cdots[\gamma_{a_0}]^{j_{a_0}},\text{ }\sum|j_a|\leq J\right\},
\end{equation}
and choose the smallest $J_0$ such that $\# U(J_0)>N$. If $F\subset\tilde{M}$ is a fundamental domain of the universal cover, which contains $\tilde{x}_0$ lying in the preimage of $x_0$, and $r_0=N\max_i l(\gamma_i)+\mathcal{D}$ then
\begin{equation}
\bigcup_{\mathrm{g}\in U(J_0)}\mathrm{g}\left(\tilde{B}_{\mathcal{D}}(\tilde{x}_0)\cap F\right)\subset \tilde{B}_{r_0}(\tilde{x}_0).
\end{equation}
It follows that the volume estimate of Proposition \ref{volest} implies
\begin{align}\label{qw}
\begin{split}
N\mathcal{V}\leq N\mathrm{Vol}_{f}(M)\leq&
\mathrm{Vol}_{\tilde{f}}\left(\tilde{B}_{r_0}(\tilde{x}_0)\right)\\
\leq&
(r_0+1)^m e^{\left[\sqrt{\delta}(r^2_0+r^3_0)
h(\sqrt{\delta}r_0)+\mathcal{C}\right]}
\overline{\mathrm{Vol}}_n(B_{r_0}).
\end{split}
\end{align}
If it were the case that $\max_i l(\gamma_i)<\mathcal{D}/N$ then \eqref{qw} yields
\begin{equation}
N<\mathcal{V}^{-1}(2\mathcal{D}+1)^m e^{\left[ \sqrt{\delta}((2\mathcal{D})^2+(2\mathcal{D})^3)
h(\sqrt{\delta}2\mathcal{D})+\mathcal{C}\right]}
\overline{\mathrm{Vol}}_n(B_{2\mathcal{D}}),
\end{equation}
a contradiction. Therefore $\max_i l(\gamma_i)\geq\mathcal{D}/N$.
Moreover, as in \cite[Theorem 2.3]{Anderson}, the finite number of isomorphism types of $\pi_1(M)$ follows from the above loop inequality and Proposition \ref{volest}, as well as a result of Gromov \cite[Proposition 5.28]{Gromov} concerning generators of the fundamental group.
\end{proof}

We are now able to establish a polynomial growth result for the fundamental group, generalizing \cite[Theorem 3.5]{Jaramillo} and \cite[Theorem 1]{Wei}.

\begin{lemma}\label{polynomial}
Consider a complete Riemannian manifold $(M,g,X)$ of dimension $n$ with smooth 1-form $X$. Let $m>0$, $\delta\ge 0$, and assume that
\begin{equation}\label{vgh}
\mathrm{Ric}_X^m(g)\ge -(n-1)\delta g,\quad\quad
\mathrm{diam}(M)\leq\mathcal{D},
\quad\quad
\mathrm{Vol}_f(M)\geq\mathcal{V}, \quad\quad \sup_{M}|X|\leq\mathcal{C},
\end{equation}
where $f$ is given in Proposition \ref{volest}. There exists $\delta_0\left(n,m,\mathcal{C},\mathcal{D},\mathcal{V}\right)>0$, such that if $\delta\leq\delta_0$ then $\pi_1(M)$ is of polynomial growth of degree $\leq n+m$.
\end{lemma}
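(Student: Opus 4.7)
The plan is to follow the strategy of Wei and Jaramillo, counting elements of $\pi_1(M)$ by bounding their displacement in the Riemannian universal cover, and then using Proposition \ref{volest} in place of the standard Bishop-Gromov inequality.

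Pass to the Riemannian universal cover $(\tilde M,\tilde g,\tilde X)$. Since the potential $f$ supplied by Proposition \ref{volest} is defined on the compact manifold $M$, its pullback $\tilde f$ is invariant under the deck action of $\Gamma := \pi_1(M)$, so any fundamental domain $F\subset\tilde M$ satisfies $\mathrm{Vol}_{\tilde f}(F)=\mathrm{Vol}_f(M)\ge\mathcal{V}$. Fix a lift $\tilde x_0\in F$ of some $x_0\in M$. By Gromov's standard argument (as in the proof of Lemma \ref{1234567}), one produces a symmetric generating set $\Sigma=\{\sigma_1,\ldots,\sigma_s\}$ of $\Gamma$ with $d(\tilde x_0,\sigma_j\tilde x_0)\le 2\mathcal{D}$ for each $j$; applying Proposition \ref{volest} to $\tilde B_{3\mathcal{D}}(\tilde x_0)$ bounds $s$ by a constant depending only on $(n,m,\mathcal C,\mathcal D,\mathcal V)$.

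Next, bound the word-growth function $N_\Sigma(k):=\#\{\gamma\in\Gamma:|\gamma|_\Sigma\le k\}$. By iterated triangle inequality, any $\gamma$ of word length $\le k$ satisfies $d(\tilde x_0,\gamma\tilde x_0)\le 2\mathcal{D}k$, so the measure-disjoint translates $\gamma F$ all lie in $\tilde B_{r_k}(\tilde x_0)$ with $r_k:=2\mathcal{D}k+\mathcal{D}$. Disjointness of the translates together with Proposition \ref{volest} yields
\begin{equation*}
N_\Sigma(k)\,\mathcal{V}\le\mathrm{Vol}_{\tilde f}\!\bigl(\tilde B_{r_k}(\tilde x_0)\bigr)\le (r_k+1)^m\exp\!\bigl[\sqrt{\delta}(r_k^2+r_k^3)h(\sqrt{\delta}r_k)+\mathcal{C}\bigr]\,\overline{\mathrm{Vol}}_n(B_{r_k}).
\end{equation*}
Two features of the right-hand side drive the argument: (i) the analytic function $h$ is bounded on $[0,\infty)$, since the auxiliary $\bar h$ in the proof of Proposition \ref{volest} decays like $x^{-2}$ at infinity; and (ii) in the comparison space of constant curvature $-\delta$, one has $\overline{\mathrm{Vol}}_n(B_r)\le c_n r^n$ uniformly in the regime $\sqrt{\delta}r\le 1$. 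Restricted to this regime, the exponential factor is bounded by a constant $c_1(n,m,\mathcal C,\mathcal D)$, so
\begin{equation*}
N_\Sigma(k)\le C_0\,k^{n+m}\qquad\text{for all } 1\le k\le k_\star(\delta):=\lfloor(2\mathcal{D}\sqrt{\delta})^{-1}\rfloor,
\end{equation*}
where $C_0=C_0(n,m,\mathcal C,\mathcal D,\mathcal V)$, and $k_\star(\delta)\to\infty$ as $\delta\to 0$.

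Finally, upgrade from a polynomial bound on a $\delta$-dependent window to polynomial growth of $\Gamma$ at all scales. Choose $\delta_0=\delta_0(n,m,\mathcal C,\mathcal D,\mathcal V)$ small enough that $k_\star(\delta_0)$ exceeds a threshold determined by $s$, $n$, $m$, $C_0$. Since $\Gamma$ is finitely generated with $s$ generators, if its growth were exponential then $N_\Sigma(k)\ge c^k$ for some $c>1$ and all large $k$, which would exceed $C_0 k^{n+m}$ already for $k$ well below $k_\star$, contradicting the estimate above. The standard group-theoretic argument of Wei then promotes the bound $N_\Sigma(k)\le C_0 k^{n+m}$ valid over a sufficiently wide window to polynomial growth of degree $\le n+m$ at all scales. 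The principal obstacle is the weakness of the volume estimate for large $\sqrt{\delta}r_k$---where the cubic-in-$r$ exponential factor $e^{\sqrt{\delta}r^3 h(\sqrt{\delta}r)}$ and the exponential growth of $\overline{\mathrm{Vol}}_n(B_r)$ become destructive---which forces the argument to proceed through a local polynomial estimate on a $\delta$-dependent range combined with the rigidity of word-growth rates for finitely generated groups, rather than through a single uniform estimate in $k$.
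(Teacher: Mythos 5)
Your counting setup (disjoint translates of a fundamental domain inside $\tilde B_{r_k}(\tilde x_0)$, estimated by Proposition \ref{volest}) is exactly the paper's mechanism, but there are two problems. The lesser one is computational: on the window $\sqrt{\delta}\,r_k\le 1$ the exponent $\sqrt{\delta}(r_k^2+r_k^3)h(\sqrt{\delta}r_k)$ is \emph{not} uniformly bounded --- writing $\sqrt{\delta}r_k^3=(\sqrt{\delta}r_k)r_k^2\le r_k^2$ only gives a factor $e^{c k^2}$, which destroys the claimed bound $N_\Sigma(k)\le C_0k^{n+m}$. You need $\sqrt{\delta}\,r_k^3\lesssim 1$, i.e.\ a window $k\lesssim \delta^{-1/6}$ (this is precisely why the paper arranges $\sqrt{\delta_i}\,s_i^3\to 0$). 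The window still tends to infinity as $\delta\to 0$, so this is fixable.

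The serious gap is the final ``upgrade'' step. There is no standard group-theoretic fact promoting a bound $N_\Sigma(k)\le C_0k^{n+m}$ on a finite window $[1,k_\star]$ to polynomial growth of degree $\le n+m$ at all scales, with $k_\star$ depending only on $(s,n,m,C_0)$. Your argument addresses only the exponential-growth alternative, but the negation of ``polynomial growth of degree $\le n+m$'' also allows intermediate growth and polynomial growth of higher degree; moreover, even for groups of exponential growth the rate $c>1$ in $N_\Sigma(k)\ge c^k$ depends on the group and can be arbitrarily close to $1$, so no threshold determined by $s,n,m,C_0$ alone forces $c^k>C_0k^{n+m}$ inside the window. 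The missing ingredient --- which is the actual crux of Wei's argument and of the paper's proof --- is Lemma \ref{1234567} (finitely many isomorphism types of $\pi_1$) combined with Gromov's Proposition 5.28 (generators of displacement $\le 3\mathcal{D}$, uniformly bounded in number, with all relations of the form $[\gamma_j][\gamma_k]=[\gamma_l]$). This reduces the discussion to finitely many marked groups, hence finitely many growth functions, and the paper then argues by contradiction along a sequence $\delta_i\to 0$: for a fixed growth function, window bounds of length tending to infinity do force a global polynomial bound. Without importing that finiteness, your last paragraph does not close the argument.
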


\begin{proof}
Assume that the conclusion is false. Then there exists a sequence of manifolds $(M_i,g_i,X_i)$, and constants $\delta_i\rightarrow 0$, satisfying \eqref{vgh} such that $\pi_1(M_i)$ is not of polynomial growth of degree $\leq n+m$. Therefore, if $\Gamma_i(s)$ denotes the set of distinct words in $\pi_1(M_i)$ of length $\leq s$, then for any set of generators of $\pi_1(M_i)$ we can find $s_i\rightarrow\infty$ such that
\begin{equation}\label{numbergen}
\#\Gamma_i(s_i)>i s_i^{n+m},\quad\quad\quad \sqrt{\delta_i}s_i^3\rightarrow 0.
\end{equation}
This is achieved using the freedom to choose $s_i$ along with the following observation.
Lemma \ref{1234567} states that when \eqref{vgh} is satisfied there are finitely many isomorphism types of $\pi_1(M)$, and according to \cite[Proposition 5.28]{Gromov} for each isomorphism type there are
generating loops $\gamma_j$, $j=1,\ldots,J$ with the property that $\max_j l(\gamma_j)\leq 3\mathcal{D}$ and all relations in these generators are of the form $[\gamma_j][\gamma_k]=[\gamma_l]$.  Note that the control on generator length Lemma \ref{1234567}, together with the proof of \cite[Theorem 2.3]{Anderson} in which Proposition \ref{volest} is used in place of Bishop-Gromov volume comparison, shows that the number of generators $J$ is bounded above in terms of $n$, $m$, $\mathcal{C}$, $\mathcal{D}$, and $\mathcal{V}$. In particular, the number of generators used to describe \eqref{numbergen} may be taken independent of $i$.

Let $\tilde{x}_0^i\in\tilde{M}_i$ be in the preimage (within the universal cover) of a chosen base point $x_0^i\in M_i$ for the fundamental group, and choose a fundamental domain $F_i$ for $\pi_1(M_i)$ containing $\tilde{x}_0^i$. If $r_i=(3s_i +1)\mathcal{D}$ then
\begin{equation}
\bigcup_{\mathrm{g}\in\Gamma_i(s_i)}\mathrm{g}\left(F_i\right)\subset
\tilde{B}_{r_i}(\tilde{x}_0^i).
\end{equation}
We then have, by Proposition \ref{volest}, that for sufficiently large $i$,
\begin{align}
\begin{split}
\#\Gamma_i(s_i)\mathcal{V}\leq&\#\Gamma_i(s_i)\mathrm{Vol}_{f_i}(M_i)\\
\leq & \mathrm{Vol}_{\tilde{f}_i}\left(\tilde{B}_{r_i}(\tilde{x}_0^i)\right)\\
\leq & (r_i+1)^m e^{\left[\sqrt{\delta_i}(r^2_i+r^3_i)
h(\sqrt{\delta_i}r_i)+\mathcal{C}\right]}
\overline{\mathrm{Vol}}_n(B_{r_i})\\
\leq & \left((3s_i +1)\mathcal{D}+1\right)^m e^{\mathcal{C}+1}|S^{n-1}|
\int_{0}^{(3s_i +1)\mathcal{D}}\left(\frac{\sinh(\sqrt{\delta_i} \rho)}{\sqrt{\delta_i}}\right)^{n-1}d\rho\\
\leq & \frac{(4\mathcal{D})^{n+m}e^{\mathcal{C}+1}|S^{n-1}|}{n}s_i^{n+m}.
\end{split}
\end{align}
This, however, contradicts \eqref{numbergen}.
\end{proof}

From the polynomial growth property, Yun \cite[Theorem 2]{Yun} was able to establish the almost abelian characterization of the fundamental group for manifolds with almost nonnegative Ricci curvature, and this was extended to the gradient Bakry-\'{E}mery setting by Jaramillo \cite[Theorem 1.3]{Jaramillo}. Here we generalize these results to the nongradient Bakry-\'{E}mery case. The proof relies on the almost splitting result Theorem \ref{theoremalmostsplit1}, the generator length and isomorphism type bounds Lemma \ref{1234567}, and the polynomial growth characterization Lemma \ref{polynomial}. With these ingredients, the arguments of \cite{Yun} apply without change to yield desired almost abelian theorem.

\begin{theorem}\label{abelian}
Consider a complete Riemannian manifold $(M,g,X)$ of dimension $n$ with smooth 1-form $X$. Let $m>0$, $\delta\ge 0$, and assume that
\begin{equation}
\mathrm{Ric}_X^m(g)\ge -(n-1)\delta g,\quad
\mathrm{diam}(M)\leq\mathcal{D},
\quad
\mathrm{Vol}(M)\geq\mathcal{V}, \quad \sup_{M}\left(|X|+|\nabla\mathrm{div}X|\right)\leq\mathcal{C}.
\end{equation}
There exists $\delta_0\left(n,m,\mathcal{C},\mathcal{D},\mathcal{V}\right)>0$, such that if $\delta\leq\delta_0$ then $\pi_1(M)$ is almost abelian.
\end{theorem}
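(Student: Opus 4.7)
The plan is to argue by contradiction along the lines of Yun \cite{Yun}, feeding in the nongradient Bakry-\'{E}mery ingredients developed earlier in the paper in place of their Ricci counterparts. Suppose the theorem fails; then there exists a sequence $(M_i,g_i,X_i)$ satisfying all the hypotheses with $\delta_i\to 0$, such that no $\pi_1(M_i)$ is almost abelian. Since Lemma \ref{1234567} together with Remark~2 tells us there are only finitely many isomorphism types of $\pi_1(M_i)$ in this class, after passing to a subsequence we may assume $\pi_1(M_i)\cong G$ for a single finitely generated group $G$ which is not almost abelian. By Lemma \ref{polynomial} (applied for $i$ large so that $\delta_i\le\delta_0$), $G$ has polynomial growth of degree at most $n+m$, so by Gromov's polynomial growth theorem $G$ is virtually nilpotent.

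Passing to the nilpotent finite-index subgroup and replacing each $M_i$ by the corresponding finite cover $\widehat{M}_i$, the bounds \eqref{78iu} persist (with a new volume lower bound and the same diameter and curvature control, since these are local conditions on the universal cover), and $\pi_1(\widehat{M}_i)\cong H$ is nilpotent and still not abelian (otherwise $G$ would already be almost abelian). Thus one reduces to the case where $G=H$ is a finitely generated nilpotent group of nilpotency class $\ge 2$, and the goal is to derive a contradiction.

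The core step is the standard Yun argument, which is now available to us because Theorem \ref{theoremalmostsplit1} and Corollary \ref{corsequence} hold in the nongradient Bakry-\'Emery setting. One lifts to the universal covers $(\widetilde{M}_i,\tilde g_i,\widetilde{X}_i)$ and considers an element $\gamma$ of $G$ lying in the center, whose action on $\widetilde{M}_i$ has displacement realized by a shortest loop in $M_i$ with bounded length (via the generator-length bound of Lemma \ref{1234567}). A Gromov--Hausdorff limit of the pointed spaces $(\widetilde{M}_i,\tilde g_i,\tilde p_i)$, extracted along the subsequence, contains a line through the basepoint (built from iterating the $\gamma$-action, using the fact that $\gamma$ has infinite order in the torsion-free quotient of the nilpotent group), and by Corollary \ref{corsequence} the limit splits isometrically as $\mathbb R\times N$. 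The action of $G$ on the limit then splits as a product, and one iterates this splitting across a central series, using that $[G,G]$ acts trivially on each of the $\mathbb R$-factors, to conclude that the entire limit is a product of lines on which $G$ acts by translations; this forces $G$ to act abelianly on the limit, contradicting non-abelianness of $G$.

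The main obstacle is not the group-theoretic bookkeeping, which is combinatorial and identical to \cite{Yun}, but rather ensuring that every analytic ingredient used in that argument has been extended to our nongradient setting with a finite positive $m$. The critical points are: (a) the almost splitting result and its limiting form must apply to $(\widetilde{M}_i,\tilde g_i,\widetilde{X}_i)$, and the hypothesis $\sup|X|+|\mathrm{div}X|\le\mathcal C$ lifts to the universal cover because $X$ is pulled back from the compact base (this is why the weaker assumption $\sup_M(|X|+|\nabla\mathrm{div}X|)\le\mathcal C$ in the statement controls $\widetilde{X}_i$ on each bounded region); (b) noncollapsing of $\widetilde{M}_i$ at the scales needed for Theorem \ref{theoremalmostsplit1} is guaranteed by the weighted volume comparison Proposition \ref{volest} and Remark \ref{remark2.7}, together with the diameter and volume bounds on $M_i$; and (c) the polynomial growth estimate of Lemma \ref{polynomial} and the finiteness of isomorphism types in Lemma \ref{1234567} replace the Ricci Bishop--Gromov arguments used in \cite{Yun}. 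With these verifications, Yun's proof transfers verbatim.
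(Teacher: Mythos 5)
Your overall strategy is the same as the paper's: run Yun's argument with the nongradient ingredients (Lemma \ref{1234567}, Lemma \ref{polynomial}, Theorem \ref{theoremalmostsplit1}, Corollary \ref{corsequence}) substituted for their Ricci counterparts. However, there is one genuine gap, and it is precisely the step that constitutes the substantive content of the paper's proof. Lemma \ref{1234567} and Lemma \ref{polynomial} are stated under the hypothesis $\mathrm{Vol}_f(M)\geq\mathcal{V}$, where $f$ is the specific potential produced by Proposition \ref{volest} (namely $f=-\log u_0$ with $Lu_0=0$), whereas Theorem \ref{abelian} assumes only the unweighted bound $\mathrm{Vol}(M)\geq\mathcal{V}$. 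You invoke both lemmas without converting one bound into the other, and your appeal to the remark following Lemma \ref{1234567} is circular, since that remark explicitly defers its justification to ``the proof of Theorem \ref{abelian} below.'' A priori $u_0=e^{-f}$ is only normalized to equal $1$ at a single point and could be very small elsewhere, so $\mathrm{Vol}_f(M)=\int_M u_0\,dV_g$ is not controlled by $\mathrm{Vol}(M)$ without further work. (Remark \ref{remark2.7} does not help here: it concerns the weight $e^{\mathcal{C}\rho}$ on the comparison space, not the weight $e^{-f}$ on $M$.)

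The missing step is supplied in the paper by the Cheng--Yau-type gradient estimate of Lemma \ref{lemmaA.1}: since $u_0$ satisfies $\Delta_{-X}u_0=-\left(\mathrm{div}X\right)u_0$, the estimate gives $\sup_M|\nabla\log u_0|\leq C_1(n,m,\delta,\mathcal{C})$, and integrating along minimizing geodesics from the normalization point yields $u_0\geq e^{-C_1\mathcal{D}}$, hence $\mathrm{Vol}_f(M)\geq e^{-C_1\mathcal{D}}\mathcal{V}=:\mathcal{V}'$. This is also where the hypothesis $|\nabla\mathrm{div}X|\leq\mathcal{C}$ is actually used (it controls the term $|\nabla a|$ in \eqref{eqA.2} with $a=-\mathrm{div}X$ and $F(u)=u$), not, as you suggest, merely to control the lift of $X$ to the universal cover. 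Once this weighted volume lower bound is established, the remainder of your outline goes through as in the paper.
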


\begin{proof}
As described above, this follows from the arguments of \cite{Yun} and the previous results of this section. It remains to show that $\mathrm{Vol}_f(M)\geq\mathcal{V}'$ so that these results may be applied, where $f$ is given in Proposition \ref{volest} and $\mathcal{V}'$ depends on $n$, $m$, $\mathcal{C}$, $\mathcal{D}$, and $\mathcal{V}$. To obtain the desired conclusion we will establish a lower bound for $u=e^{-f}$. Recall from \eqref{6789} that
\begin{equation}
\Delta_{-X}u=-\left(\mathrm{div}X\right) u.
\end{equation}
A version of the Cheng-Yau gradient estimate presented in Lemma \ref{lemmaA.1} yields
\begin{equation}
\sup_{M}|\nabla\log u|\leq C_1(n,m,\delta,\mathcal{C}),
\end{equation}
where the bound on $|\nabla\mathrm{div}X|$ is used. Note that an upper bound on the range of $\delta$ allows for a choice of $C_1$ independent of $\delta$. Furthermore, by construction there is a point $x_0\in M$ such that $u(x_0)=1$. Thus, if $\gamma(r)$ is a unit speed minimizing geodesic connecting $x$ to $x_0$ then
\begin{equation}
|\log u(x)|=\left|\int_{0}^{d(x,x_0)} \partial_{r}\log u(\gamma(r))dr\right|
\leq\int_{0}^{d(x,x_0)}|\nabla\log u(\gamma(r))|dr\leq C_1\mathcal{D}.
\end{equation}
It follows that
\begin{equation}
e^{-C_1 \mathcal{D}}\leq u(x)\leq e^{C_1 \mathcal{D}},
\end{equation}
and therefore
\begin{equation}
\mathrm{Vol}_f(M)=\int_{M}e^{-f}dV_g\geq (\inf_{M}u)\mathrm{Vol}(M)\geq e^{-C_1 \mathcal{D}}\mathcal{V}=:\mathcal{V}'.
\end{equation}
\end{proof}

\subsection{A Betti number bound}

The volume estimate of Section \ref{7.1} may be used to obtain a first Betti number bound, generalizing the result of Gallot \cite{Gallot} and Gromov \cite[Theorem 5.21]{Gromov} in the setting of Ricci curvature lower bounds (see also \cite[Theorem 63]{Petersen}). Interestingly the bound we obtain in the Bakry-\'{E}mery setting depends on the synthetic dimension for general $X$, and agrees with the classical result when $X$ is a gradient.

\begin{theorem}\label{betti}
Consider a complete Riemannian manifold $(M,g,X)$ of dimension $n$ with smooth 1-form $X$. Let $m>0$, $\delta\ge 0$, and assume that
\begin{equation}
\mathrm{Ric}_X^m(g)\ge -(n-1)\delta g, \quad\quad\quad\mathrm{diam}(M)\leq\mathcal{D}, \quad\quad\quad \sup_{M}|X|\leq\mathcal{C}.
\end{equation}
Then there is a function $B\left(\delta,n,m,\mathcal{C},\mathcal{D}\right)$ that yields a bound for the first Betti number and satisfies
\begin{equation}
b_1(M)\leq B\left(\delta,n,m,\mathcal{C},\mathcal{D}\right),\quad\quad
\lim_{\delta\rightarrow 0}B\left(\delta,n,m,\mathcal{C},\mathcal{D}\right)=n+m.
\end{equation}
More precisely, there is a $\delta_0(n,m,\mathcal{C},\mathcal{D})>0$ such that if $\delta\leq\delta_0$ then $b_1(M)\leq n+m$.
Furthermore, if $X=df_0$ for some $f_0\in C^{\infty}(M)$ and the assumption $\sup_M |X|\leq\mathcal{C}$ is replaced by $\sup_M |f_0|\leq \mathcal{C}$, then the same conclusions hold with $n+m$ replaced by $n$.
\end{theorem}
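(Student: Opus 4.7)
The plan is to adapt the Gallot-Gromov short-basis packing argument on an abelian cover, replacing the Bishop-Gromov inequality by the weighted volume comparison of Proposition \ref{volest} and its relative version in Remark \ref{remark91}. Since $M$ is complete with diameter $\leq \mathcal{D}$, it is compact, so we may form the maximal free abelian cover $\pi:\hat{M}\to M$ whose deck transformation group is $\Gamma \cong \mathbb{Z}^{b_1(M)}$ (kernel of abelianization of $\pi_1(M)$, modulo torsion). Fix $x_0\in M$ and a lift $\hat{x}_0\in\hat{M}$, and put $\ell(\gamma):=d(\hat{x}_0,\gamma\hat{x}_0)$. A Gromov short basis $\{\gamma_1,\dots,\gamma_{b_1}\}$ is constructed by choosing $\gamma_i$ to minimize $\ell$ on $\Gamma\setminus\langle\gamma_1,\dots,\gamma_{i-1}\rangle$; the standard argument yields the displacement bound $\ell_i:=\ell(\gamma_i)\le 3\mathcal{D}$ together with the separation $d(\gamma\hat{x}_0,\gamma'\hat{x}_0)\ge\ell_1$ for distinct $\gamma,\gamma'\in\Gamma$.

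For each $s\in\mathbb{N}$ set $S_s:=\{\sum_i a_i\gamma_i:|a_i|\le s\}$, so $|S_s|=(2s+1)^{b_1}$. Every $\gamma\in S_s$ satisfies $\ell(\gamma)\le 3\mathcal{D}b_1 s$, while the open balls $\hat{B}_{\ell_1/2}(\gamma\hat{x}_0)$ are pairwise disjoint deck-translates of each other. Thus
\begin{equation*}
(2s+1)^{b_1}\,\mathrm{Vol}_{\tilde{f}}\bigl(\hat{B}_{\ell_1/2}(\hat{x}_0)\bigr) \le \mathrm{Vol}_{\tilde{f}}\bigl(\hat{B}_{R_s}(\hat{x}_0)\bigr),\qquad R_s:=3\mathcal{D}b_1 s+\tfrac{\ell_1}{2},
\end{equation*}
where $\tilde{f}$ is the lift of the potential from Proposition \ref{volest}. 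A Cheng-Yau gradient estimate applied on the compact base, exactly as in the proof of Theorem \ref{abelian}, bounds $|f|$ in terms of $n,m,\mathcal{C},\mathcal{D}$, so weighted and unweighted volumes are comparable. Combining this with the infinitesimal Euclidean lower bound $\mathrm{Vol}(\hat{B}_{\ell_1/2})\gtrsim\ell_1^n$ and the relative inequality of Remark \ref{remark91}, and using that the function $h$ of Proposition \ref{volest} is in fact bounded (since $\bar h(R)=O(1/R^2)$ as $R\to\infty$), one arrives at
\begin{equation*}
(2s+1)^{b_1}\le A(n,m,\mathcal{C},\mathcal{D})\,\frac{s^{n+m}}{(\ell_1/\mathcal{D})^n}\,\exp\!\bigl[\kappa\sqrt{\delta}(\mathcal{D}b_1 s)^3 h_\infty\bigr]
\end{equation*}
for a universal constant $\kappa$.

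To extract the sharp threshold, suppose for contradiction that $b_1\ge n+m+1$. Pick $s$ large enough that $(2s+1)^{b_1}>2A(\ell_1/\mathcal{D})^{-n}s^{n+m}$, which is possible because the left-hand side grows strictly faster in $s$ than $s^{n+m}$, then select $\delta_0$ small enough that $\kappa\sqrt{\delta_0}(\mathcal{D}b_1 s)^3 h_\infty\le\log 2$. This produces a contradiction whenever $\delta\le\delta_0$. A preliminary crude application of the same inequality at a definite scale first bounds $b_1$ a priori in terms of the parameters, so that $\delta_0$ depends only on $n,m,\mathcal{C},\mathcal{D}$. The existence of the general bounding function $B$ with $\lim_{\delta\to 0}B=n+m$ then follows by optimizing $s$: for each fixed $s$ the exponential correction tends to $1$ as $\delta\to 0$, so the pure polynomial comparison forces $b_1\le n+m$ in the limit. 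The principal difficulty throughout is precisely this cubic-in-$r$ exponential factor in Proposition \ref{volest}, which for fixed $\delta>0$ limits the admissible range of scales and is the reason the bound $n+m$ becomes sharp only asymptotically.

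Finally, for the gradient case $X=df_0$ with $\sup_M|f_0|\le\mathcal{C}$ (no control on $|df_0|$), the polynomial correction $(r+1)^m$ introduced in Proposition \ref{volest} can be replaced by a multiplicative constant $e^{2\mathcal{C}}$ via the Wei-Wylie weighted volume comparison with the $n$-dimensional comparison space, giving $\mathrm{Vol}(\hat{B}_r)\le e^{2\mathcal{C}}\,\overline{\mathrm{Vol}}_n(B_r)\,\exp[O(\sqrt{\delta}r^3)]$. Replaying the packing argument with this cleaner estimate replaces $n+m$ by $n$ throughout and yields $b_1\le n$ for $\delta$ sufficiently small, recovering the classical Gallot-Gromov dimension.
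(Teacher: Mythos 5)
Your overall strategy --- a short-basis packing argument on an abelian cover, with Bishop--Gromov replaced by the weighted relative volume comparison of Proposition \ref{volest} and Remark \ref{remark91}, and the Jaramillo/Wei--Wylie comparison in the gradient case --- is exactly the route the paper takes. However, two steps as written do not go through under the stated hypotheses. First, you compare weighted and unweighted volumes by invoking the Cheng--Yau gradient estimate ``exactly as in the proof of Theorem \ref{abelian}.'' That application of Lemma \ref{lemmaA.1} (to $\Delta_{-X}u=-(\mathrm{div}X)\,u$) requires $\sup_M|\nabla\mathrm{div}X|\leq\mathcal{C}$, which is a hypothesis of Theorem \ref{abelian} but \emph{not} of Theorem \ref{betti}; here only $\sup_M|X|\leq\mathcal{C}$ is assumed, so you have no pointwise control on the potential $f$. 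The fix is to delete this step: since $\tilde f$ is deck-invariant, the disjoint balls $\hat B_{\ell_1/2}(\gamma\hat x_0)$ all have the same $\tilde f$-volume, and Remark \ref{remark91} bounds the ratio $\mathrm{Vol}_{\tilde f}(\hat B_{R_s})/\mathrm{Vol}_{\tilde f}(\hat B_{\ell_1/2})$ directly. No passage to unweighted volume is needed, and the ``infinitesimal Euclidean lower bound $\mathrm{Vol}(\hat B_{\ell_1/2})\gtrsim \ell_1^n$'' is in any case false with a uniform constant absent a noncollapsing hypothesis, which Theorem \ref{betti} does not make.

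Second, your parametrization $S_s=\{\sum_i a_i\gamma_i:|a_i|\leq s\}$ places the translates in a ball of radius $R_s\approx 3\mathcal{D}b_1 s$, so $b_1$ enters the exponential error $\exp\bigl[\kappa\sqrt{\delta}(\mathcal{D}b_1 s)^3h_\infty\bigr]$. Your ``preliminary crude application at a definite scale'' is then circular: for fixed $\delta>0$ and fixed $s$ the right-hand side grows like $e^{cb_1^3}$ in $b_1$, which dominates $(2s+1)^{b_1}$, so no a priori bound on $b_1$ emerges and your $\delta_0$ cannot be made independent of the manifold. The paper avoids this by counting words with $\sum_a|j_a|\leq r$, so the containing radius is $2r\,\mathrm{diam}(M)+\tfrac{1}{2}\mathrm{diam}(M)$, independent of $b_1$; assuming $b_1\geq n+m+1$ one fixes $r_0=r_0(n,m,\mathcal{C})$ with $(2r_0+1)^{n+m+1}>5^{n+m}e^{\mathcal{C}_0+1}r_0^{n+m}$ and then takes $\delta_0$ small depending only on $r_0$, $n$, $m$, $\mathcal{C}$, $\mathcal{D}$. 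Your remaining ingredients --- deck-invariance of $\tilde f$, boundedness of $h$ (a correct observation the paper does not exploit explicitly), and the gradient-case substitution removing the $(r+1)^m$ factor to recover $b_1\leq n$ --- match the paper and are sound.
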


\begin{proof}
Recall that $b_1(M)=\mathrm{dim}H_1(M,\mathbb{R})$, and the first homology group is isomorphic to the abelianized fundamental group $H_1(M,\mathbb{Z})=\pi_1(M)/[\pi_1(M),\pi_1(M)]$. This is a finitely generated abelian group, and its torsion subgroup $\mathcal{T}$ is normal. Let $\tilde{M}$ denote the universal cover. Then we may construct a cover
\begin{equation}
\hat{M}=\left(\tilde{M}/[\pi_1(M),\pi_1(M)]\right)/\mathcal{T},
\end{equation}
on which the torsion-free group $G=H_1(M,\mathbb{Z})/\mathcal{T}$ acts by deck transformations, and with $\mathrm{rank}(G)=b_1(M)$. Note that any finite index subgroup of $G$ also has rank $b_1(M)$. According to \cite[Lemma 5.19]{Gromov}, for fixed $\hat{x}\in\hat{M}$, there is a finite index subgroup $\Gamma\leq G$ generated by loops $\gamma_1,\cdots,\gamma_{b_1}\subset M$ such that
\begin{equation}
d(\hat{x},[\gamma_i](\hat{x}))\leq 2\mathrm{diam}(M),\quad\quad\quad
d(\hat{x},\mathrm{g}(\hat{x}))>\mathrm{diam}(M),\text{ }\mathrm{g}\in\Gamma\setminus\{1\}.
\end{equation}

Consider the set
\begin{equation}
U(r)=\{\mathrm{g}\in\Gamma\mid \mathrm{g}=[\gamma_1]^{j_1}\cdots[\gamma_{b_1}]^{j_{b_{1}}},\text{ }\sum|j_a|\leq r\}.
\end{equation}
Observe that for each $\mathrm{g}\in\Gamma\setminus\{1\}$ the balls $\hat{B}_{r_1}(\mathrm{g}(\hat{x}))$ are disjoint where $r_1=\frac{\mathrm{diam}(M)}{2}$, and
\begin{equation}\label{100}
\hat{B}_{\frac{\mathrm{diam}(M)}{2}}(\mathrm{g}(\hat{x}))\subset
\hat{B}_{r_2}(\hat{x}),\quad\quad\quad \mathrm{g}\in U(r)
\end{equation}
where $r_2=2r\mathrm{diam}(M)+\frac{\mathrm{diam}(M)}{2}$. Let $\hat{f}$ denote the pullback to $\hat{M}$ of the function $f\in C^{\infty}(M)$ given by Proposition \ref{volest}. Since the elements of $\Gamma$ act by isometries, the $\hat{f}$-volumes in \eqref{100} have the same value. From Remark \ref{remark91} it follows that
\begin{align}\label{,./}
\begin{split}
\#U(r)\leq&\frac{\mathrm{Vol}_{\hat{f}}\left(\hat{B}_{r_2}(\hat{x})\right)}
{\mathrm{Vol}_{\hat{f}}\left(\hat{B}_{r_1}(\hat{x})\right)}\\
\leq&\frac{\int_{0}^{r_2}(\rho+1)^{m}
e^{\left[ \sqrt{\delta}(\rho^2+\rho^3)
h(\sqrt{\delta}\rho)+\mathcal{C}_0\right]}\ell^{n-1}d\rho}
{\int_{0}^{r_1}(\rho+1)^{m}
e^{\left[ \sqrt{\delta}(\rho^2+\rho^3)
h(\sqrt{\delta}\rho)\right]}\ell^{n-1}d\rho}\\
\leq&\frac{\int_{0}^{2r\mathcal{D}+\frac{\mathcal{D}}{2}}(\rho+1)^{m}
e^{\left[ \sqrt{\delta}(\rho^2+\rho^3)
h(\sqrt{\delta}\rho)+\mathcal{C}_0\right]}
\sinh^{n-1}(\sqrt{\delta}\rho)d\rho}
{\int_{0}^{\frac{\mathcal{D}}{2}}(\rho+1)^{m}
e^{\left[ \sqrt{\delta}(\rho^2+\rho^3)
h(\sqrt{\delta}\rho)\right]}\sinh^{n-1}(\sqrt{\delta}\rho)d\rho}\\
\leq& 5^{n+m} e^{\mathcal{C}_0 +1} r^{m+n}
%\leq &
%\frac{\int_{0}^{2r\mathcal{D}+\frac{\mathcal{D}}{2}}e^{\mathcal{C}_0}(\rho+1)^{m+n-1}
%\left[1+O(\delta \rho^4)\right]\left[1+O(\sqrt{\delta}\rho)^2\right]d\rho}
%{\int_{0}^{\frac{\mathcal{D}}{2}}\rho^{m+n-1}d\rho}\\
%\leq& 4^{n+m} e^{\mathcal{C}_0} r^{m+n}\left(1+O(\delta \mathcal{D}^4 r^{4})\right)
\end{split}
\end{align}
for large $r$ and sufficiently small $\delta$, with the latter comparatively small relative to the former. On the other hand, by construction, if $r$ is an integer then $\# U(r)=(2r+1)^{b_1}$. Thus, if $b_1>n+m$ then there is a large integer $r_0=r_0(n,m,\mathcal{C})$ satisfying
\begin{equation}\label{65790}
(2r_0 +1)^{b_1}>5^{n+m} e^{\mathcal{C}_0 +1} r^{m+n}_{0}.
\end{equation}
We may now choose $\delta_0=\delta_0(r_0,n,m,\mathcal{C},\mathcal{D})$ such that \eqref{,./} holds for $\delta\leq\delta_0$ with $r=r_0$. The contradiction between \eqref{,./} and \eqref{65790} yields the desired result.

Lastly, if $X=df_0$ for some $f_0\in C^{\infty}(M)$ and the assumption $\sup_M |X|\leq\mathcal{C}$ is replaced by $\sup_M |f_0|\leq \mathcal{C}$, then the same arguments above may be applied with Proposition \ref{volest} replaced by Proposition 3.2 of \cite{Jaramillo}. The factor $(\rho+1)^m$ will not be present in \eqref{,./}, leading to the same conclusions with $n+m$ replaced by $n$.
\end{proof}

\subsection{Applications to the topology of horizons}

Consider the setting of Theorem \ref{maincor}. Recall that the following equation for the $m$-Bakry-\'{E}mery Ricci tensor is induced upon a horizon cross-section $\mathcal{H}$, namely
\begin{equation}
\mathrm{Ric}_{X}^{m}(g)=\frac{2}{n} \Lambda g+2\kappa\chi,
\end{equation}
where $\Lambda$ is the cosmological constant, $\kappa$ is surface gravity, $\chi_{ij}=\langle\pmb{\nabla}_{\partial_i}U,\partial_j\rangle$ is the null second fundamental form in the $U$ direction (transverse to the horizon), and $X$ is a renormalized piece of the Killing vector $V$. By combining this with results of the previous sections we obtain restrictions on horizon topology. Define $\lambda$ to be a lower or upper bound (depending on the sign of $\kappa$) for the eigenvalues of $\chi$, that is
\begin{equation}
\kappa\lambda=\inf_{x\in\mathcal{H}}\min_{ w\in T_{x}\mathcal{H}\atop |w|=1}\kappa\chi(w,w).
\end{equation}
As before let $\mathcal{C}$, $\mathcal{D}$, and $\mathcal{V}$ be constants such that
\begin{equation}\label{,al}
\mathrm{diam}(\mathcal{H})\leq\mathcal{D},
\quad\quad\quad
\mathrm{Vol}(\mathcal{H})\geq\mathcal{V}, \quad \quad\quad \sup_{\mathcal{H}}\left(|X|+|\nabla\mathrm{div}X|\right)\leq\mathcal{C}.
\end{equation}
The next result then follows directly from Theorems \ref{abelian}, \ref{betti}, and the discussion above.

\begin{theorem}\label{theorem-main}
Let $\mathcal{H}$ be a single component compact horizon cross-section in a stationary vacuum spacetime satisfying $\Lambda+n\kappa\lambda\geq-\delta$ and \eqref{,al}.

\begin{itemize}
\item [(i)] There exists $\delta_0(n,\mathcal{C},\mathcal{D},\mathcal{V})>0$, such that if $\delta\leq\delta_0$ then the fundamental group $\pi_{1}(\mathcal{H})$ contains an abelian subgroup of finite index.\smallskip

\item [(ii)] There exists $\delta_0(n,\mathcal{C},\mathcal{D})>0$, such that if $\delta\leq\delta_0$ then the first Betti number satisfies $b_{1}(\mathcal{H})\leq n+2$. Moreover, if $X=df_0$ for some $f_0\in C^{\infty}(\mathcal{H})$ and the assumption $\sup_{\mathcal{H}}\left(|X|+|\nabla\mathrm{div}X|\right)\leq\mathcal{C}$ is replaced by $\sup_{\mathcal{H}} |f_0|\leq \mathcal{C}$, then $b_{1}(\mathcal{H})\leq n$.\smallskip
\end{itemize}
\end{theorem}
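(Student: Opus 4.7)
The proof is essentially a translation: the hypothesis $\Lambda + n\kappa\lambda \geq -\delta$ together with the induced horizon equation \eqref{B-E} should yield a lower bound on the Bakry-\'{E}mery Ricci tensor of $\mathcal{H}$ of the form $\mathrm{Ric}_X^m(g) \geq -(n-1)\delta'g$ with $\delta'\to 0$ as $\delta\to 0$. Once that is in hand, both conclusions will follow by directly invoking Theorem \ref{abelian} and Theorem \ref{betti} respectively on $\mathcal{H}$.

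The first step is to unpack the equation $\mathrm{Ric}_X^m(g) = \tfrac{2}{n}\Lambda g + 2\kappa\chi$ derived in the introduction (here $m=2$ and $X=-h$, with $\mathcal{H}$ of dimension $n$). By the definition of $\lambda$, we have $\kappa\chi(w,w) \geq \kappa\lambda$ for every unit tangent vector $w$, so $2\kappa\chi \geq 2\kappa\lambda\, g$ as symmetric $2$-tensors. Combining,
\begin{equation}
\mathrm{Ric}_X^m(g) \;\geq\; \tfrac{2}{n}\bigl(\Lambda + n\kappa\lambda\bigr)\,g \;\geq\; -\tfrac{2\delta}{n}\,g \;=\; -(n-1)\,\delta'\,g, \qquad \delta' := \tfrac{2\delta}{n(n-1)}.
\end{equation}
Thus $\delta'$ is a linear function of $\delta$ depending only on $n$, and shrinking $\delta$ shrinks $\delta'$ correspondingly.

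With this curvature bound, together with the hypotheses \eqref{,al} on $\mathrm{diam}(\mathcal{H})$, $\mathrm{Vol}(\mathcal{H})$, and $\sup_{\mathcal{H}}(|X|+|\nabla\mathrm{div}X|)$, the manifold $(\mathcal{H},g,X)$ satisfies exactly the hypotheses of Theorem \ref{abelian} (with dimension $n$, synthetic parameter $m=2$, and effective smallness $\delta'$). Applying that theorem produces a threshold $\delta_0' = \delta_0'(n,2,\mathcal{C},\mathcal{D},\mathcal{V})$ below which $\pi_1(\mathcal{H})$ contains an abelian subgroup of finite index; translating through $\delta' = \tfrac{2\delta}{n(n-1)}$ gives the threshold $\delta_0 = \delta_0(n,\mathcal{C},\mathcal{D},\mathcal{V})$ required for statement $(\mathrm{i})$.

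For statement $(\mathrm{ii})$, we apply Theorem \ref{betti} in the same way: $\mathcal{H}$ has dimension $n$, synthetic dimension $m=2$, $\sup_{\mathcal{H}}|X|\leq\mathcal{C}$, and $\mathrm{Ric}_X^m \geq -(n-1)\delta'g$. The theorem yields $\delta_0' = \delta_0'(n,2,\mathcal{C},\mathcal{D})$ below which $b_1(\mathcal{H}) \leq n + m = n+2$, and again converting $\delta'$ back to $\delta$ gives the stated dependence. The gradient refinement $X = df_0$ with $\sup_{\mathcal{H}}|f_0|\leq\mathcal{C}$ falls under the second part of Theorem \ref{betti}, producing $b_1(\mathcal{H})\leq n$. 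There is essentially no obstacle in this proof beyond the bookkeeping of constants; the only point requiring attention is verifying that the $\delta_0$ produced by the cited theorems depends on the listed parameters and does not implicitly require any additional control (notably that statement $(\mathrm{ii})$ of Theorem \ref{betti} does not require a volume lower bound, matching the parameter list in $(\mathrm{ii})$ here).
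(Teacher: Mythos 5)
Your proposal is correct and follows essentially the same route as the paper: the paper itself states that Theorem \ref{theorem-main} ``follows directly from Theorems \ref{abelian}, \ref{betti}, and the discussion above,'' i.e.\ from converting $\Lambda+n\kappa\lambda\geq-\delta$ and the horizon identity $\mathrm{Ric}_X^2(g)=\tfrac{2}{n}\Lambda g+2\kappa\chi$ into the lower bound $\mathrm{Ric}_X^2(g)\geq-(n-1)\delta' g$ with $\delta'$ comparable to $\delta$, and then citing the two topological theorems with $m=2$. Your bookkeeping of the constants and of the parameter dependence of $\delta_0$ is accurate.
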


We may now establish Theorem \ref{maincor}.
Indeed, compact manifolds have finitely generated fundamental groups, and as shown by Gromov \cite{GromovG} finitely generated almost abelian groups cannot be of exponential growth. Thus, horizons which fit within the context of Theorem \ref{theorem-main} $(\mathrm{i})$ cannot have fundamental groups of exponential growth. In particular, such horizons cannot arise as a nontrivial connected sum except for a few special cases, see the discussion in Section 3 of \cite{KWW}.
This yields part $(\mathrm{i})$ of Theorem \ref{maincor}. Part $(\mathrm{ii})$ of Theorem \ref{maincor} follows directly from Theorem \ref{theorem-main} $(\mathrm{ii})$. Next, observe that if $\Lambda>0$ and the surface gravity is sufficiently small then the horizon cross-section is of positive Bakry-\'{E}mery Ricci curvature, and this implies via a generalization of Myers Theorem that the fundamental group of the horizon must be finite \cite{KhuriWoolgar1}. This gives Theorem \ref{maincor} $(\mathrm{iii})$. Lastly, Lemma \ref{1234567} implies Theorem \ref{maincor} $(\mathrm{iv})$ and completes the proof.

\appendix

\section{A Cheng-Yau Gradient Estimate}
\label{secA}
\setcounter{equation}{0}

\begin{lemma}\label{lemmaA.1}
Let $(M,g,X)$ be a complete Riemannian manifold of dimension $n$ with smooth vector field $X$.  Let $m,\mathcal{C}>0$, $\delta\ge 0$, and $0<r_1<r_2$, and assume that $\mathrm{Ric}_X^m(g)\ge -(n-1)\delta g$ together with $|X|\leq\mathcal{C}$ on $B_{r_2}(p)$. Suppose that $u\in C^{\infty}(B_{r_2}(p))$ is positive and satisfies
\begin{equation}
\label{eqA.1}
\Delta_X u = a F(u),
\end{equation}
for some functions $a\in C^{\infty}(B_{r_2}(p))$ and $F\in C^{\infty}(\mathbb{R}_+)$. Then there exists a constant $C_0\geq 1$ depending on $n,m,\delta,r_1,r_2,\mathcal{C}$ such that
\begin{equation}
\label{eqA.2}
\sup_{B_{r_1}(p)}|\nabla\log u|^2 \!\leq\! C_0 +\sup_{B_{r_2}(p)}\!\left\{\!8n\left[\left(|a|+|\nabla a|\right)\frac{|F(u)|}{u}\!+\! |aF'(u)| \right]\!+\!
4\!\left (\!\mathcal{C}\!+\!\sqrt{\frac{|F(u)|}{u}}\right )^2 \!\right\}.
\end{equation}
\end{lemma}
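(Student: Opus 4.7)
The plan is to adapt the classical Cheng-Yau maximum principle argument to the drift Laplacian, driven by the Bakry-\'Emery Bochner formula \eqref{eq4.8}. Set $w=\log u$ and $H=|\nabla w|^2$; a direct computation gives $\Delta_X w = \Delta_X u/u - H = aF(u)/u - H$, reducing the goal to a pointwise bound on $H$. Applying \eqref{eq4.8} with $w$ in place of $u$ and discarding the nonnegative term $\tfrac{2}{m}\langle X,\nabla w\rangle^2$ (this step is where the assumption of positive finite $m$ works to our advantage, making the proof insensitive to whether or not $X$ is a gradient) yields
\begin{equation*}
\Delta_X H \ge 2|\hess w|^2 + 2\nabla_{\nabla w}\Delta_X w - 2(n-1)\delta H.
\end{equation*}

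For the Hessian term I would invoke $|\hess w|^2\ge (\Delta w)^2/n$ combined with $\Delta w=\Delta_X w+\langle X,\nabla w\rangle$, the bound $|\langle X,\nabla w\rangle|\le\mathcal{C}\sqrt{H}$, and a Young-type inequality of the shape $(A+B-H)^2\ge \tfrac12 H^2 - (A+B)^2$ to extract a lower bound roughly of the form $\tfrac{1}{2n}H^2$ minus error terms controlled by $|aF(u)/u|$ and $\mathcal{C}\sqrt{H}$. For the gradient-of-Laplacian term, expand
\begin{equation*}
\nabla_{\nabla w}\Delta_X w = \nabla_{\nabla w}(aF(u)/u) - \nabla_{\nabla w} H
\end{equation*}
and use the identity $\nabla_{\nabla w}u = uH$ (immediate from $\nabla u = u\nabla w$) to convert the first summand into terms controlled by $|\nabla a|\,|F(u)|/u\cdot\sqrt{H}$, $|aF'(u)|\,H$, and $|a|\,|F(u)|/u\cdot H$. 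Collecting the contributions produces a differential inequality of the form $\Delta_X H + 2\nabla_{\nabla w} H \ge \tfrac{1}{2n}H^2 - \Lambda_1 H - \Lambda_2$, with $\Lambda_1,\Lambda_2$ expressible in terms of the bracketed supremum on the right side of \eqref{eqA.2}.

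The final step is the standard Cheng-Yau cut-off argument. Choose $\phi\in C_c^{\infty}(B_{r_2}(p))$ with $\phi\equiv 1$ on $B_{r_1}(p)$, $|\nabla\phi|^2\le C\phi$, and $\Delta_X\phi\ge -C$, the last inequality following from Proposition \ref{lemma2.1} applied to a radial test function in the spirit of the original construction in \cite{ChengYau}, but using the $(n+m)$-dimensional comparison model in place of the $n$-dimensional one. At an interior maximum $x_0$ of $\phi H$ one has $\phi\nabla H=-H\nabla\phi$ and $\Delta_X(\phi H)(x_0)\le 0$, which expands to $\phi\Delta_X H + H\Delta_X\phi - 2H|\nabla\phi|^2/\phi\le 0$. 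Substituting the Bochner-derived lower bound (and handling the residual $\nabla_{\nabla w}H$ term by $|\langle\nabla w,\nabla\phi\rangle|\le \sqrt{H}|\nabla\phi|$ absorbed via Young) converts this into a quadratic inequality for $H(x_0)$ whose solution yields $\phi H(x_0)\le C_0 + \Lambda_1+\Lambda_2$, giving the required estimate on $B_{r_1}(p)$.

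The main obstacle will be bookkeeping the Young splittings so that the numerical coefficients $8n$ in front of the nonlinearity terms and $4$ in front of $(\mathcal{C}+\sqrt{|F(u)|/u})^2$ emerge exactly as stated in \eqref{eqA.2}. The square structure $(\mathcal{C}+\sqrt{|F(u)|/u})^2$ is a hint that at $x_0$ one ought to bound $\sqrt{H(x_0)}$ directly by $\mathcal{C}+\sqrt{|F(u)|/u}$ plus cut-off contributions, so the Young splittings should be chosen with that target in mind rather than the more cavalier separation $2A^2 + 2B^2$ that a cruder argument would use.
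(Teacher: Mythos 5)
Your proposal is correct and follows essentially the same route as the paper's proof: the drift Bochner formula with the nonnegative $\tfrac{2}{m}(X(v))^2$ term handled by sign, the Cauchy--Schwarz bound $|\hess v|^2\ge(\Delta v)^2/n$ with $\Delta v=\Delta_X v+X(v)$, and a radial cut-off controlled via the $(n+m)$-dimensional Laplacian comparison at an interior maximum of $\phi|\nabla v|^2$. Your closing observation is exactly how the paper pins down the constant $4(\mathcal{C}+\sqrt{|F(u)|/u})^2$: instead of a Young splitting it dichotomizes on whether $Q(q)\le 2\phi(G(v)+X(v))(q)$, bounding $\sqrt{Q}$ directly in that case and retaining the full $\tfrac{1}{4n}Q^2$ coercivity in the other.
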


\begin{proof}
The proof involves a detailed but straightforward calculation that appears in \cite[Chapter 7]{Cheeger}, which we modify to accommodate the vector field $X$. Using equation \eqref{eqA.1} a direct computation shows that for $v:=\log u$ we obtain
\begin{equation}
\label{eqA.3}
\Delta_X v = -\left \vert \nabla v \right \vert^2 +ae^{-v}F(e^v)=:-\left \vert \nabla v \right \vert^2 +aG(v).
\end{equation}
Next, define $Q:=\phi |\nabla v |^2$ where the nonnegative cut-off function $\phi:B_{r_2}(p)\to [0,1]$ is chosen such that $\phi= 1$ on $B_{r_1}(p)$, $\phi=0$ in a neighborhood of $\partial B_{r_2}(p)$, and $\phi\leq 1$ on $B_{r_2}(p)$. In what follows, calculations will be evaluated at a point $q\in B_{r_2}(p)$ where $Q$ takes its maximum, so terms involving $\nabla Q$ will be dropped or rather the identity $0=|\nabla v|^2 \nabla\phi+\phi\nabla\left(|\nabla v|^2 \right)$ will be implemented.

First observe that
\begin{equation}
\label{A.4}
\Delta_X Q = \frac{Q}{\phi} \Delta_X \phi -\frac{2Q}{\phi^2} |\nabla\phi |^2 +\phi \Delta_X \left ( |\nabla v|^2 \right )\ .
\end{equation}
The last term in this formula may be replaced with help from the Bochner formula \cite[Lemma 4]{KWW}
\begin{align}
\label{eqA.5}
\begin{split}
\Delta_X \left ( |\nabla v|^2 \right ) =& 2|\hess v|^2 +2\mathrm{Ric}_X^m (\nabla v, \nabla v) +2\nabla_{\nabla v}\Delta_X v +\frac{2}{m}\left ( X(v)\right )^2\\
\ge & \frac{2}{n}\left ( \Delta v\right )^2 -\frac{2(n-1)\delta}{\phi} Q+2\nabla_{\nabla v}\Delta_X v +\frac{2}{m}\left ( X(v)\right )^2\ ,
\end{split}
\end{align}
where the Bakry-\'{E}mery Ricci curvature lower bound and the Cauchy-Schwarz inequality were used. Furthermore by \eqref{eqA.3}
\begin{align}
\label{eqA.6}
\begin{split}
\frac{2}{n}\phi\left ( \Delta v\right )^2=& \frac{2}{n}\phi^{-1} \left ( \phi \Delta_X v+\phi X(v)\right )^2\\
=& \frac{2}{n}\left (\phi G(v) +\phi X(v)-Q\right )^2,
\end{split}
\end{align}
and
\begin{align}
\label{eqA.7}
\begin{split}
2\phi \nabla_{\nabla v}\Delta_X v =&  2\phi \nabla_{\nabla v}\left ( aG(v)-|\nabla v |^2 \right )\\
=& 2\phi(\nabla v\cdot\nabla a)G(v)+2aG'(v)Q-2\phi \nabla v \cdot \nabla \left (|\nabla v |^2\right ) \\
=& 2\phi(\nabla v\cdot\nabla a)G(v)+2aG'(v)Q+2|\nabla v |^2 \nabla v \cdot \nabla \phi \\
=& 2\phi(\nabla v\cdot\nabla a)G(v)+2aG'(v)Q+\frac{2}{\phi} Q \nabla v \cdot \nabla \phi \\
\ge & -2|\nabla a||G(v)|\phi^{1/2}Q^{1/2}+2aG'(v)Q-4n \frac{|\nabla \phi |^2}{\phi^2}Q -\frac{1}{4n\phi}Q^2.
\end{split}
\end{align}
Gathering the above expressions produces
\begin{align}
\label{eqA.8}
\begin{split}
\phi \Delta_X Q \ge & Q\Delta_X \phi -(2+4n) \frac{|\nabla \phi |^2}{\phi}Q -2|\nabla a||G(v)|\phi^{3/2}Q^{1/2}+2a\phi G'(v)Q-\frac{1}{4n}Q^2\\
& -2(n-1)\delta\phi Q  +\frac{2}{m}\phi \left ( X(v)\right )^2 +\frac{2}{n}\left ( \phi X(v) +\phi G(v) -Q \right )^2
\end{split}
\end{align}
at $q$, where $Q$ takes its maximum.

Now suppose that $Q(q)\le 2\phi \left ( G(v)+X(v) \right )(q)$, then the definitions of $v$, $G$, and $Q$ yield
\begin{equation}
\label{eqA.9}
|\nabla\log u|^2\le 2 u^{-1}\left ( F(u)+X(u)\right ) \le 2u^{-1}|F(u)|  +2\mathcal{C}|\nabla \log u |\quad\quad\text{at}\quad\quad q.
\end{equation}
It follows that
\begin{equation}
\label{eqA.10}
\sup_{B_{r_2}(p)} Q\le 4\left(\mathcal{C}+\sup_{B_{r_2}(p)} \sqrt{u^{-1}|F(u)|}\right)^2.
\end{equation}
If on the other hand $Q(q)\ge 2\phi \left ( G(v)+X(v) \right )(q)$, then this may be manipulated into the form
\begin{equation}
\label{eqA.11}
\frac{2}{n}\left ( \phi X(v) + \phi G(v) -Q\right )^2 -\frac{1}{4n}Q^2 \ge \frac{1}{4n}Q^2 .
\end{equation}
Inserting this into \eqref{eqA.8} and using that $\Delta_X Q\le 0$ at the maximum point $q$, gives rise to
\begin{equation}
\label{eqA.12}
\frac{1}{4n}Q \le  -\Delta_X \phi +(2+4n) \frac{|\nabla \phi |^2}{\phi}
+2|\nabla a||G(v)|\phi^{3/2}Q^{-1/2}
-2a\phi G'(v) +2(n-1)\delta\phi.
\end{equation}
We may assume that $Q(q)> 1$, otherwise \eqref{eqA.2} is automatically valid since $C_0 \geq 1$. It follows that
\begin{equation}\label{111100}
\sup_{B_{r_2}(p)}Q\leq C_0 +\sup_{B_{r_2}(p)}8n\left[(|a|+|\nabla a|)u^{-1}|F(u)|+|aF'(u)| \right].
\end{equation}
In order to show that the constant $C_0$ depends only on the quantities stated in the lemma, we choose the cut-off function $\phi$ to be a non-increasing function of the distance $\rho$ from $p$, so that as in Corollary \ref{corollary2.4} we have $\Delta_X\phi\ge \bar{\Delta}_{n+m}\phi$.
Note that a modification employing a barrier function produces the same result when $q$ is a cut point (see \cite[page 41]{Cheeger}).

Finally observe that the sequence of elementary inequalities
\begin{equation}
\sup_{B_{r_1}(p)}|\nabla\log u|^2=\sup_{B_{r_1}(p)}Q
\leq\sup_{B_{r_2}(p)}Q,
\end{equation}
together with \eqref{eqA.10} and \eqref{111100} gives the desired result.
\end{proof}

\end{document}